\newtheorem{theorem}{Theorem}[section]
\newtheorem{lemma}[theorem]{Lemma}
\newtheorem{corollary}[theorem]{Corollary}
\theoremstyle{definition}
\newtheorem{definition}[theorem]{Definition}
\theoremstyle{remark}
\newtheorem{remark}{Remark}
\newtheorem{example}{Example}
\numberwithin{equation}{section}
\def\d{\mathrm{d}}
\def\<{\langle}
\def\>{\rangle}
\newcommand{\R}{\ensuremath{\mathbb{R}}}
\newcommand{\C}{\ensuremath{\mathbb{C}}}
\newcommand{\E}[1]{\mathbb{E}\left[#1\right]}
\newcommand{\Z}{\ensuremath{\mathbb{Z}}}
\newcommand{\cS}{\ensuremath{\mathcal S}}
\newcommand{\cU}{\ensuremath{\mathcal U}}
\newcommand{\bbE}{{\ensuremath{\mathbb E}} }
\newcommand{\bbP}{{\ensuremath{\mathbb P}} }
\def\1{\ifmmode {1\hskip -3pt \rm{I}}
\else {\hbox {$1\hskip -3pt \rm{I}$}}\fi} 
\begin{document}

\title{Smallest Singular Value for Perturbations of Random Permutation Matrices}
\author[G. Ben Arous]{G\'{e}rard Ben Arous}
\address{G. Ben Arous\\
 Courant Institute of the Mathematical Sciences\\
 New York University\\
 251 Mercer Street\\
 New York, NY 10012, USA}
 \email{benarous@cims.nyu.edu}
\author[K. Dang]{Kim Dang}
\address{K. Dang\\
Department of Mathematics\\
Yale University\\
10 Hillhouse Ave\\
New Haven CT 06511}
\email{kim.dang@yale.edu}
\subjclass[2010]{15B52, 60B20, 60C05} \keywords{Random Matrices, Least Singular Values, Random Permutations, Random Walks with negative drift}

\date{\today}

\maketitle
\begin{abstract}We take a first small step to extend the validity of Rudelson-Vershynin type estimates to some sparse random matrices, here random permutation matrices. We give lower (and upper) bounds on the smallest singular value of a large random matrix $D+M$ where $M$ is a random permutation matrix, sampled uniformly, and $D$ is diagonal. When $D$ is itself random with i.i.d terms on the diagonal, we obtain a Rudelson-Vershynin type estimate, using the classical theory of random walks with negative drift.
\end{abstract}

\section{Introduction}

If $M$ is large random matrix, it is both important and usually difficult to find sharp lower bounds on its smallest singular value  $s_{\min}(M)$ (see  \cite{rudelson}, \cite{rudelson06}, \cite{rudelsonvershynin08}, \cite{rv10}, \cite{tao_vu}, \cite{tv09}, \cite{tv10-2}). For instance, such lower bounds were important for the proofs of the circular law (see \cite{tv10}, \cite{gotze}, \cite{bordenave},  \cite{rudelson}, \cite{rudelsonvershynin08}, \cite{tv09}, \cite{tao_vu}, \cite{tv10}), or the single ring theorem \cite{guionnet}.

In \cite{rudelson_vershynin}, Rudelson and Vershynin give remarkable quantitative estimates of the smallest singular value for perturbation of random unitary or orthogonal matrices, i.e. for matrices $M+D$ where $M$ is a random unitary (or orthogonal) matrix, and $D$ is a fixed matrix.  In this work we explore a possible extension of these estimates to the same question in the case where $M$ is sampled from a discrete subgroup. The tools  in \cite{rudelson_vershynin} relies on the Lie structure of the unitary and orthogonal groups. These tools are not readily available for discrete subgroups of these groups.

In this paper, we will consider a simple example of the case where $M$ is sampled uniformly from a discrete subgroup group of the unitary group, i.e. the case where $M$ is a random permutation matrix, sampled uniformly, and $D$ is diagonal.

We first prove sharp deterministic estimates for the smallest singular value $s_{\min}(M+D)$, where $M$ is a permutation matrix and $D$ is diagonal. The interesting situation is the case where $D$ has (diagonal) entries both inside and outside the unit circle. Indeed it is easy to see that, if the entries of $D$ all lie outside (or all lie inside) the disk of radius $1$, then the smallest singular value $s_{\min}(M+D)$ can be bounded below by the smallest distance of these entries to the unit circle.  
We use those deterministic estimates to show in particular that, if the diagonal entries of $D$ are themselves random (and i.i.d), and $M$ is a random permutation matrix, then a Rudelson-Vershynin type estimate holds, under natural assumptions on the law of the entries of $D$. Our proof uses a new result in the classical theory of random walks with negative drift, given in the appendix
\ref{appendix}.\\

Acknowledgements: The authors are grateful to Van Vu for sharing this interesting question, and to Jean Bertoin and Amir Dembo for extensive discussions about the theory of random walks with negative drifts. The work of the first author has been partially supported by NSF Grant, DMS-1209165.

\subsection{Statements of results}\label{statements}

Let $\sigma\in\cS_N$ and define $M_\sigma$ to be the $N\times N$ permutation matrix with entries 

\begin{equation}
M_\sigma(i,j)=\mathbbm{1}_{\sigma(i)=j}, \qquad \text{for } 1\leq i,j\leq N.
\end{equation}

For any $N$-tuple of complex numbers $d_1,\dots, d_N$, consider the diagonal matrix 
\begin{equation}
D=\text{diag}(d_1,\dots, d_N).
\end{equation}

We would like to understand the invertibility and the behavior of the minimum singular value $s_{\min}(A)$ of the matrix
\begin{equation}
A=D+M_\sigma.
\end{equation}

Using the cycle decomposition of the permutation $\sigma$, the matrix $A$ is easily reduced to a block-diagonal matrix by a unitary conjugation.
The study of the smallest singular value of $A$ then amounts to studying the smallest singular values of the matrix blocks, given by each cycle of $\sigma$.

Indeed, consider the cycle decomposition of the permutation 
\begin{equation}\sigma=(C_1, \dots, C_{K(\sigma)}),\end{equation} 

where $K(\sigma)$ denotes the total number of cycles. Define 
\begin{equation}N_1=|C_1|, \dots, N_{K(\sigma)}=|C_{K(\sigma)}|\end{equation} 
to be the cycle lengths. We will assume, without loss of generality, that the cycles have been ranked by decreasing length, i.e. 
\begin{equation}
N_1 \geq N_2 \geq  \dots \geq N_{K(\sigma)}
\end{equation}

For $1\leq i\leq K(\sigma)$, write the cycle $C_i$ as 
\begin{equation}
C_i=(n_i, \sigma(n_i),  \dots,  \sigma ^{N_i-1}(n_i)),
\end{equation}
where $1\leq n_i\leq N$ is the number starting the cycle $C_i$.

Now, for $1 \leq i \leq K(\sigma)$, denote by $A_i$ the $N_i\times N_i$ matrix defined by
\begin{equation}
A_i := D_i + U_{N_i},
\end{equation}

where  $D_i$ is the diagonal matrix 
\begin{equation}
D_i=\text{diag}(d_{n_i}, d_{\sigma(n_i)}, \dots, d_{\sigma^{N_i-1}(n_i)})
\end{equation}

and where for any integer $n \geq 1$,  the $n\times n$ matrix $U_n$ is defined by 
\begin{equation}\label{U_n}
U_n=\begin{pmatrix}0 & 1 & \dots & 0 \\ \vdots & \ddots & \ddots & 0 \\ \vdots &   & \ddots & 1 \\ 1 & \dots & \dots & 0\end{pmatrix}
\end{equation}

We have the following simple result, which settles the invertibility question and reduces the estimation of $s_{\min}(A)$ to the same question for the matrices $A_i$  for $1\leq i \leq K(\sigma)$, which pertain to the case of single-cycle permutations.

\begin{theorem}\label{CycleDecomposition} \
\begin{enumerate} 

\item $A$ is invertible iff, for every $1\leq i\leq K(\sigma)$,
\begin{equation}
\prod_{\ell\in C_i}d_\ell \neq (-1)^{N_i}.
\end{equation}

\item The smallest singular value of $A$ is given by
\begin{equation}
s_{\min}(A)=\min_{1\leq i \leq K(\sigma)}s_{\min}(A_i).
\end{equation}
\end{enumerate}
\end{theorem}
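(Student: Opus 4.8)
The plan is to use the cycle decomposition of $\sigma$ to conjugate $A=D+M_\sigma$ into block-diagonal form by a permutation matrix, and then to reduce both assertions to the corresponding statements for a single block $A_i=D_i+U_{N_i}$.

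\textbf{Step 1 (block-diagonalization).} I would introduce the permutation $\tau$ of $\{1,\dots,N\}$ that lists the coordinates cycle by cycle: $\tau$ sends the $k$-th index of the $i$-th block $\{N_1+\dots+N_{i-1}+1,\dots,N_1+\dots+N_i\}$ to $\sigma^{k-1}(n_i)$. Let $P=M_\tau$, which is a permutation matrix, hence unitary. Since $D$ is diagonal, $PDP^{-1}=\text{diag}(d_{\tau(1)},\dots,d_{\tau(N)})$ is again diagonal, and by the choice of $\tau$ its $i$-th block of size $N_i$ is exactly $D_i$. On the other hand $PM_\sigma P^{-1}=M_{\tau^{-1}\sigma\tau}$, and the permutation $\tau^{-1}\sigma\tau$ leaves each block of consecutive indices invariant and cycles it. Using $M_\pi e_j=e_{\pi^{-1}(j)}$ together with the fact that the matrix $U_n$ of \eqref{U_n} acts by $U_n e_j=e_{j-1}$ (indices mod $n$), one checks that the $i$-th block of $PM_\sigma P^{-1}$ is precisely $U_{N_i}$. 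Hence $P(D+M_\sigma)P^{-1}=\bigoplus_{i=1}^{K(\sigma)}A_i$.

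\textbf{Step 2 (from the block decomposition to both claims).} Conjugation by the unitary $P$ leaves the singular values unchanged, since $(PAP^{-1})^*(PAP^{-1})$ is similar to $A^*A$; and for a block-diagonal matrix the multiset of singular values is the union of the multisets of singular values of the blocks. This immediately yields $s_{\min}(A)=\min_{1\le i\le K(\sigma)}s_{\min}(A_i)$, which is part (2), and shows that $A$ is invertible iff each $A_i$ is.

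\textbf{Step 3 (invertibility of a single block).} Write $D_i=\text{diag}(\delta_1,\dots,\delta_{N_i})$ with $\delta_k=d_{\sigma^{k-1}(n_i)}$. Then the system $A_i x=0$ reads $x_{k+1}=-\delta_k x_k$ for $1\le k\le N_i-1$ and $x_1=-\delta_{N_i}x_{N_i}$. If $x_1=0$ all coordinates vanish; otherwise eliminating gives $x_1=(-1)^{N_i}(\prod_{k}\delta_k)\,x_1$, so $A_i$ has nontrivial kernel iff $\prod_{\ell\in C_i}d_\ell=(-1)^{N_i}$. (Equivalently, cofactor expansion along the first column gives $\det A_i=\prod_{\ell\in C_i}d_\ell-(-1)^{N_i}$.) Combined with Step 2, this is exactly part (1). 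The only genuinely delicate point is the bookkeeping in Step 1 — getting the direction of the permutation action right so that the cycle block of $M_\sigma$ is literally $U_{N_i}$ rather than its transpose or inverse, and checking that the diagonal entries of $D_i$ appear in the order $d_{n_i},d_{\sigma(n_i)},\dots,d_{\sigma^{N_i-1}(n_i)}$; once the identity $P(D+M_\sigma)P^{-1}=\bigoplus_i A_i$ is in hand, the rest is routine linear algebra.
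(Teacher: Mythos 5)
Your proposal is correct and follows essentially the same route as the paper: conjugation by the permutation matrix $M_\tau$ (where $\tau$ lists the indices cycle by cycle) reduces $A$ to the block-diagonal matrix $\bigoplus_i A_i$, from which both claims follow since unitary conjugation preserves singular values and the singular values of a block-diagonal matrix are the union of those of the blocks. The only cosmetic difference is that the paper phrases Step~1 via the auxiliary permutation $\sigma^{\mathrm{ordered}}=\tau^{-1}\sigma\tau$ and then computes $\det(A_i)$ directly, whereas you argue via the kernel of $A_i$; both are the same routine linear algebra.
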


Thus, the invertibility of $A$ reduces to understanding the behavior of the products $\prod_{\ell\in C_i}d_\ell$ of the diagonal elements of $D$ on the cycles of the permutation $\sigma$. It is clear that $A$ is invertible in the case where the modulus of those diagonal elements are either all smaller than $1$ or all larger than $1$.\\

We start by a theorem showing that for this case, the matrix $A$ is indeed well invertible, i.e. that the least singular value of $A$ is bounded away from zero.

\begin{theorem}\label{OneSided} \
\begin{enumerate}
\item
Assume that, for all $1\leq i\leq N$, $|d_i| < 1$. Define $\epsilon_N = 1 - \max_{1 \leq i \leq N} |d_i|$.
Then, we have the following lower bound 
\begin{equation}
s_{\min}(A) \geq  \frac{1}{2\sqrt2} \epsilon_N
\end{equation}

\item
Assume that, for all $1\leq i\leq N$, $|d_i| > 1$. Define $\epsilon_N = \min_{1 \leq i \leq N} |d_i| -1$.
Then, we have the following lower bound 
\begin{equation}
s_{\min}(A) \geq  \frac{1}{2\sqrt2} \epsilon_N
\end{equation}
\end{enumerate}
\end{theorem}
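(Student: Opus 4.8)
The plan is to bound $s_{\min}(A)=1/\|A^{-1}\|$ directly by a Neumann series, using that $M_\sigma$ is a real permutation matrix, hence orthogonal, so that left or right multiplication by $M_\sigma$ (or by $M_\sigma^{-1}=M_\sigma^{T}$) preserves the operator norm; in particular $\|M_\sigma^{-1}D\|=\|D\|=\max_{i}|d_i|$.

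For part (1): since $\max_i|d_i|=1-\epsilon_N<1$, factor $A=M_\sigma\,(I+M_\sigma^{-1}D)$. Then $\|M_\sigma^{-1}D\|=1-\epsilon_N<1$, so $I+M_\sigma^{-1}D$ is invertible with $\|(I+M_\sigma^{-1}D)^{-1}\|\le\sum_{k\ge0}(1-\epsilon_N)^{k}=1/\epsilon_N$. Hence $A$ is invertible and $\|A^{-1}\|=\|(I+M_\sigma^{-1}D)^{-1}M_\sigma^{-1}\|\le 1/\epsilon_N$, i.e.\ $s_{\min}(A)\ge\epsilon_N$ --- in fact stronger than the claimed $\epsilon_N/(2\sqrt2)$, the weaker constant leaving room for a cruder argument that avoids invoking unitarity.

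For part (2): now $D$ is invertible with $\|D^{-1}\|=1/\min_i|d_i|=1/(1+\epsilon_N)<1$. Factor $A=D\,(I+D^{-1}M_\sigma)$; since $\|D^{-1}M_\sigma\|=\|D^{-1}\|<1$ the Neumann series gives $\|(I+D^{-1}M_\sigma)^{-1}\|\le(1+\epsilon_N)/\epsilon_N$, whence $\|A^{-1}\|=\|(I+D^{-1}M_\sigma)^{-1}D^{-1}\|\le\frac{1+\epsilon_N}{\epsilon_N}\cdot\frac{1}{1+\epsilon_N}=1/\epsilon_N$, so again $s_{\min}(A)\ge\epsilon_N$.

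An equivalent route, closer to the single-cycle / random-walk viewpoint used later, is to first apply Theorem~\ref{CycleDecomposition} to reduce to one block $A_i=D_i+U_{N_i}$ and argue coordinatewise: for a unit vector $x$ the identity $(A_ix)_j=(D_i)_{jj}x_j+x_{j+1}$ (indices mod $N_i$) gives $|x_{j+1}|\le(1-\epsilon_N)|x_j|+|(A_ix)_j|$; unrolling this contraction around the cycle and summing the geometric series --- the periodicity is exactly what kills the remainder term --- yields $|x_j|\le\sum_{k\ge1}(1-\epsilon_N)^{k-1}|(A_ix)_{j-k}|$, and Cauchy--Schwarz (or Young's inequality) applied to this cyclic convolution gives $\|x\|\le\epsilon_N^{-1}\|A_ix\|$; part (2) is the same estimate read backwards. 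Either way there is essentially no obstacle: the only point needing care is the cyclic wrap-around, namely that $M_\sigma$ (resp.\ $U_{N_i}$) is an isometry so that factoring it out is free, and that the geometric tail converges precisely because $\max_i|d_i|<1$ (resp.\ $>1$). The genuine difficulty is deferred to the two-sided deterministic case and to the random-$D$ case.
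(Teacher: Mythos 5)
Your argument is correct, and it takes a genuinely different route from the paper --- it is shorter, more elementary, and gives the sharper bound $s_{\min}(A)\ge\epsilon_N$ rather than $\epsilon_N/(2\sqrt2)$. The paper instead reduces to a single cycle via Theorem~\ref{CycleDecomposition}, passes through the explicit inverse $(D+U_N)^{-1}=B(D)+C(D)$ of Theorem~\ref{Inversion}, and bounds the two pieces separately: the rank-one part $C(D)$ by its Hilbert--Schmidt norm and a lower bound on $c_0(D)$, the strictly triangular part $B(D)$ by a Toeplitz-matrix estimate (Lemma~\ref{BoundToeplitz}); each piece is shown to have operator norm at most $\sqrt2/\epsilon_N$, whence the $2\sqrt2$. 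The case $|d_i|>1$ is then handled by the separate duality Lemma~\ref{duality}. Your factorization $A=M_\sigma(I+M_\sigma^{-1}D)$ (resp.\ $A=D(I+D^{-1}M_\sigma)$) bypasses the cycle decomposition, the explicit inverse, the Toeplitz bound, and the duality lemma entirely, using only that $M_\sigma$ is unitary and that the contraction factor $\|M_\sigma^{-1}D\|=\max_i|d_i|$ (resp.\ $\|D^{-1}M_\sigma\|=1/\min_i|d_i|$) is strictly less than $1$, so the Neumann series closes. The paper's heavier $B(D)+C(D)$ machinery is what gets reused in Theorems~\ref{ExplicitLowerBounds-1Cycle} and beyond, where the $|d_i|$ straddle the unit circle and no Neumann series converges; for the one-sided statement itself, your proof is the better one. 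Your sketched coordinatewise alternative (unrolling the cyclic recursion on a single block and closing with Young's inequality) is sound too and is essentially a hands-on rendering of the same geometric-series contraction, but it requires the cycle decomposition first, which the operator-factorization version does not.
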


In particular, we see that
\begin{corollary} \label{OneSided2} \
If $|d_i| < 1- \epsilon$, for all $ 1 \leq i \leq N$, or if $|d_i| > 1+ \epsilon$, for all $1 \leq i \leq N$, then the least singular value of $A$ is bounded away from zero, independently of $N$:
\begin{equation}
s_{\min}(A) \geq  \frac{1}{2\sqrt2} \epsilon
\end{equation}
\end{corollary}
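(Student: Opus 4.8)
The plan is to obtain Corollary \ref{OneSided2} directly from Theorem \ref{OneSided}, the point being simply that the lower bound $\tfrac{1}{2\sqrt2}\epsilon_N$ produced there is monotone in the gap $\epsilon_N$, while the hypotheses of the corollary force $\epsilon_N \geq \epsilon$. So there is really nothing to prove beyond a comparison of constants; all the content sits in Theorem \ref{OneSided}.

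In detail, first suppose $|d_i| < 1-\epsilon$ for all $1\leq i\leq N$. Since there are only finitely many indices, $\max_{1\leq i\leq N}|d_i|$ is attained and is at most $1-\epsilon<1$; in particular all $|d_i|<1$, so part (1) of Theorem \ref{OneSided} applies. The quantity appearing there, $\epsilon_N = 1 - \max_{1\leq i\leq N}|d_i|$, then satisfies $\epsilon_N \geq \epsilon$, and hence
\begin{equation}
s_{\min}(A) \geq \frac{1}{2\sqrt2}\,\epsilon_N \geq \frac{1}{2\sqrt2}\,\epsilon .
\end{equation}
Symmetrically, if $|d_i| > 1+\epsilon$ for all $i$, then all $|d_i|>1$, part (2) of Theorem \ref{OneSided} applies, $\epsilon_N = \min_{1\leq i\leq N}|d_i| - 1 \geq \epsilon$, and the same chain of inequalities gives $s_{\min}(A) \geq \tfrac{1}{2\sqrt2}\,\epsilon$. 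This is exactly the claimed bound, and it is manifestly independent of $N$.

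There is essentially no obstacle to overcome: the only thing worth isolating is the passage from the (a priori $N$-dependent) gap $\epsilon_N$ used in Theorem \ref{OneSided} to the uniform-in-$N$ constant $\epsilon$ in the hypothesis, and the trivial inequality $\epsilon_N \geq \epsilon$ is precisely what effects this passage and delivers a bound that does not deteriorate as $N\to\infty$.
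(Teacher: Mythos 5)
Your proof is correct and coincides with the paper's (implicit) derivation: the corollary is stated there as an immediate consequence of Theorem~\ref{OneSided}, obtained precisely by noting $\epsilon_N \geq \epsilon$ under either hypothesis and applying the theorem. Nothing to add.
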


It might be useful to give here the simplest possible and most explicit example, i.e. the case where the matrix $D$ is scalar. An explicit computation of $s_{min}(A)$ is then easy.

\begin{example}\label{Example-Scalar-Case}
Let $d \in \C$. If $D=dI_N$,  the smallest singular value of $A$ is explicitly given by the formula
\begin{equation}
s_{\min}(A)=\inf_{1\leq i\leq K(\sigma)}\varphi_{N_i}(d),
\end{equation}

where, for any integer $n \geq 1$ and $z \in \C$,
\begin{equation}\label{varphi}
\varphi_n(z):=\text{dist}(-z, \cU_n) = \min ( |\omega + z|, \omega \in \cU_n)
\end{equation}
and $\cU_n$ denotes the set of $n$-th roots of unity
\begin{equation}
\cU_n = \{  \omega \in \C, \omega^n=1 \}.
\end{equation}

 It is very easy to see that the following elementary estimate holds:
\begin{equation}
| |z| -1| \leq \varphi_n(z) \leq | |z| -1| + 2( |z| \wedge 1) \sin(\frac{\pi}{2n})
\end{equation}
Thus, we see here that  one should indeed distinguish between the cases where $|d|=1$ and $|d|\neq 1$. If $|d|\neq1$,
\begin{equation}
||d|-1| \leq s_{\min}(A) \leq ||d|-1|+ 2 (|d|\wedge 1) \sin(\frac{\pi}{2N_1}).
\end{equation}

If $N \to \infty$, and if $d$ depends on $N$, i.e. $d=d_N$, we can see that the asymptotic behavior of $s_{min}(A)$ is simple.
$s_{min}(A)$ is bounded below by $\epsilon_N= ||d_N|-1|$, and this lower bound is sharp when the largest cycle length diverges.

But, if $|d|=1$ and $d=e^{i\eta}$, then the arithmetic properties of $\eta$ become important. We will not dwell on that here.
\end{example}

\bigskip

We will now study the more interesting case where the $|d_{\ell}|$'s can take values both above and below $1$, and give sharp deterministic bounds on $s^2_{min}(A)$. To state these, we introduce some notations.
\begin{definition} \label{definitionrho}

Let $D= \text{diag}(d_1,\dots, d_N)$. 
\begin{enumerate}
\item Let
\begin{equation}
c_0(D)= |\det(D+U_N)|^2= | (-1)^N - \Pi_{l=1}^N d_l|^2.
\end{equation}

\item
For $1 \leq k \leq m \leq N$, let
\begin{equation}
\beta_{k,m}(D) := (-1)^{m-k+1} \Pi_{l=k}^m d_l
\end{equation}
For $2 \leq k \leq N+1$ and $ m = k-1$, we set $ \beta_{k,m} = 1$.

\item
Let
\begin{equation}
\gamma_N(D) = \max_{1\leq k \leq N} \sum_{m=1}^N |\beta_{k,m}|^2 .
\end{equation}

\item
Assuming that $D+U_N$ is invertible, i.e. that $c_0(D) > 0$, we define 
\begin{equation}
\rho_N(D) =2 ( \rho_N^{(1)}(D) + \rho_N^{(2)}(D))
\end{equation}
where 
\begin{equation}
\rho^{(1)}_N(D) = \frac{1}{c_0(D)} \sum_{k=1}^N \ |\beta_{1, k-1}|^2 \sum_{m=1}^N \ |\beta_{m+1,N}|^2
\end{equation}
and 
\begin{equation}
\rho^{(2)}_N(D) = \sum_{1\leq k\leq m-1 \leq N} |\beta_{k+1,m-1}|^2
\end{equation}
\end{enumerate}
\end{definition}

We have then the following bounds

\begin{theorem}\label{ExplicitLowerBounds-1Cycle} \
\begin{enumerate}

\item Assuming that $D+U_N$ is invertible, i.e. that $c_0(D) > 0$, 
\begin{equation}
\frac{1}{\rho_N(D)} \leq s^2_{min}(D+U_N) \leq \frac{c_0(D)}{\gamma_N(D)}
\end{equation}

\item Assuming that $A$ is invertible, i.e. that $c_0(D_i) > 0$, for $1\leq i\leq K(\sigma)$, then 
\begin{equation}
\min_{1\leq i \leq K(\sigma)} \frac{1}{\rho_N(D_i)} \leq s^2_{min}(A) \leq  \min_{1\leq i\leq K(\sigma)}\frac{c_0(D_i)}{\gamma_N(D_i)}
\end{equation}

\end{enumerate}
\end{theorem}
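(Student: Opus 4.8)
Statement~(2) is an immediate consequence of~(1) and part~(2) of Theorem~\ref{CycleDecomposition}: since $s_{\min}^2(A)=\min_{1\le i\le K(\sigma)} s_{\min}^2(A_i)$ and each $A_i=D_i+U_{N_i}$, applying~(1) to every block and taking the minimum gives~(2). I therefore focus on~(1), write $B=D+U_N$, and use throughout that $s_{\min}^2(B)=\|B^{-1}\|^{-2}$ (spectral norm), so that~(1) is equivalent to $\gamma_N(D)/c_0(D)\le\|B^{-1}\|^{2}\le\rho_N(D)$. Both bounds come from one explicit computation of $B^{-1}$: since $U_N e_j=e_{j-1}$ cyclically, the system $Bx=b$ reads $d_i x_i+x_{i+1}=b_i$ for $i<N$ and $d_N x_N+x_1=b_N$, and solving the recursion forward from $x_1$ yields, with $\beta_{1,0}(D)=1$,
\begin{equation*}
x_{m+1}=\beta_{1,m}(D)\,x_1+\sum_{j=1}^{m}\beta_{j+1,m}(D)\,b_j,\qquad 0\le m\le N-1;
\end{equation*}
the last equation then forces $\Delta\,x_1=\sum_{j=1}^{N}\beta_{j+1,N}(D)\,b_j$ with $\Delta:=1+d_N\beta_{1,N-1}(D)=\det B$, so $|\Delta|^2=c_0(D)$. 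Substituting back gives
\begin{equation*}
(B^{-1})_{i,j}=\frac{\beta_{1,i-1}(D)\,\beta_{j+1,N}(D)}{\Delta}+\beta_{j+1,i-1}(D)\,\mathbbm 1_{\{1\le j\le i-1\}} .
\end{equation*}

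For the lower bound $s_{\min}^2(B)\ge 1/\rho_N(D)$ I would bound the spectral norm by the Hilbert--Schmidt norm, $\|B^{-1}\|^2\le\|B^{-1}\|_{\mathrm{HS}}^2=\sum_{i,j}|(B^{-1})_{i,j}|^2$, and expand each summand with $|a+b|^2\le 2|a|^2+2|b|^2$. The first piece factorises as $\frac{2}{c_0(D)}\big(\sum_{i=1}^N|\beta_{1,i-1}|^2\big)\big(\sum_{j=1}^N|\beta_{j+1,N}|^2\big)=2\rho_N^{(1)}(D)$, and the second is $2\sum_{1\le j\le i-1\le N-1}|\beta_{j+1,i-1}|^2$, which under $(j,i)\mapsto(k,m)$ is $\le 2\rho_N^{(2)}(D)$ (the definition of $\rho_N^{(2)}$ only adds the harmless extra index $m=N+1$). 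Hence $\|B^{-1}\|_{\mathrm{HS}}^2\le 2\rho_N^{(1)}(D)+2\rho_N^{(2)}(D)=\rho_N(D)$.

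For the upper bound $s_{\min}^2(B)\le c_0(D)/\gamma_N(D)$ I would use that $\|B^{-1}\|^2$ dominates the squared norm of every column of $B^{-1}$. Column $N$ is the simplest: the formula above gives $(B^{-1})_{i,N}=\beta_{1,i-1}(D)/\Delta$, so $\|B^{-1}e_N\|^2=\frac1{c_0(D)}\sum_{i=1}^N|\beta_{1,i-1}(D)|^2$. Now conjugating $B$ by the cyclic permutation that re-starts the $N$-cycle at position $k$ preserves $s_{\min}$ and $c_0$ and replaces $D$ by its cyclic rotation $D^{(k)}$; applying the column-$N$ identity to $D^{(k)}+U_N$ and undoing the conjugation shows that the squared column norms of $B^{-1}$ are exactly the $N$ numbers $\frac1{c_0(D)}\sum_{m=1}^N|\beta_{k,m}(D)|^2$, $k=1,\dots,N$ (reading the partial products $\beta_{k,m}$ cyclically). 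Taking the largest gives $\|B^{-1}\|^2\ge\gamma_N(D)/c_0(D)$. Equivalently, one can exhibit for each $k$ the near-kernel vector $x^{(k)}=(\beta_{1,0}(D^{(k)}),\dots,\beta_{1,N-1}(D^{(k)}))^{\mathsf{T}}$, whose consecutive entries telescope so that $(D^{(k)}+U_N)\,x^{(k)}=\Delta e_N$, whence $s_{\min}^2(B)\le\|\Delta e_N\|^2/\|x^{(k)}\|^2=c_0(D)/\|x^{(k)}\|^2$.

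The routine ingredients are the forward solution of the bidiagonal cyclic system, the $|a+b|^2$ splitting, and the telescoping identity $(D^{(k)}+U_N)x^{(k)}=\Delta e_N$. The one point that requires real care is the bookkeeping of the combinatorial quantities: one must check that the partial products produced by the recursion are precisely the $\beta_{k,m}$ of Definition~\ref{definitionrho} together with their boundary conventions, that the double sum in the Hilbert--Schmidt estimate reassembles into $\rho_N^{(1)}+\rho_N^{(2)}$, and---most delicately---that the cyclically rotated sums $\sum_m|\beta_{1,m}(D^{(k)})|^2$ coincide term by term with the $k$-th sum in the definition of $\gamma_N(D)$.
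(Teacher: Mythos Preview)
Your proposal is correct and, for the lower bound, essentially identical to the paper's: both compute the inverse explicitly as a rank-one piece plus a strictly lower-triangular piece and then pass to Hilbert--Schmidt norms (the paper splits at the level of operator norms first, $\|B(D)+C(D)\|^2\le 2\|C(D)\|^2+2\|B(D)\|_{HS}^2$, noting that $C(D)$ is rank one; you split entrywise via $|a+b|^2\le 2|a|^2+2|b|^2$---the resulting bound is the same).

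For the upper bound your route is genuinely different in presentation, though it produces the same test vector. The paper introduces an $N$-th root $u(D)$ of $\prod d_\ell$, the auxiliary weights $\delta_k=\prod_{\ell<k}d_\ell/u^{k-1}$, a change of variables $x_k=\delta_k z_k$, and then chooses $z_k=(-u)^{k-k_0}$ cyclically; after undoing all this, the resulting $x$ is (up to a scalar) exactly your near-kernel vector $x^{(k_0)}$, i.e.\ the column $B^{-1}e_N$ pulled back through the cyclic conjugation. Your formulation via column norms of $B^{-1}$ and cyclic rotation of $D$ is more transparent and avoids the $N$-th root entirely. Two small caveats: first, $\Delta=1-(-1)^N\prod d_\ell$ equals $\det B$ only up to a sign (for even $N$ they differ by $-1$), though this is immaterial since only $|\Delta|^2=c_0$ enters. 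Second, the concern you flag at the end is real but resolves in the right direction: the column norms you obtain are \emph{cyclic} sums $\sum_{i=1}^N|\beta_{1,i-1}(D^{(k)})|^2$, while $\gamma_N(D)$ is defined by the non-cyclic sums $\sum_{m=k-1}^N|\beta_{k,m}(D)|^2$; they do \emph{not} coincide term by term, but each cyclic sum dominates the corresponding non-cyclic one (it has all the same terms plus more nonnegative ones), so $\max_k\|B^{-1}e_k\|^2\ge\gamma_N(D)/c_0(D)$ and the upper bound follows.
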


\bigskip
We then use the estimates given above to study first the case where the permutation $\sigma$ is fixed and the diagonal matrix $D$ is chosen randomly, with i.i.d entries. We will then very easily translate our results to the case of a random permutation matrix.

We consider the diagonal elements $d_{\ell}$ to be i.i.d random variables sampled from a common probability distribution $\mu$ on the complex plane $\C$. We have given a lower bound on  $s^2_{min}(A)$ in Theorem~\ref{OneSided} when $\mu$ has support in $|z| <1$ or in $|z| >1$. Here, we treat the more interesting case where $\mu$ gives mass to both $|z|<1$ and $|z|>1$.
 We need to distinguish between the cases where $m \neq 0$ and $m=0$, where
\begin{equation}
m := \int \log|x|d\mu(x)=\E{\log|d_{\ell}|}.
\end{equation}
In the case where $m \neq 0$, we can (and will) assume, without loss of generality, that $m < 0$.
Indeed, using the simple Lemma~\ref{duality} given below, the case where $m>0$ is entirely analogous to the case where $m<0$ and in fact can be derived as a simple consequence.

We will need the following assumptions, on the measure $\mu$.
\begin{enumerate}\label{hypothesis1-4}
\item[H1.] We will assume that the support of $\mu$ intersects both 
$\{z \in \C, |z| < 1\}$ and $\{z \in \C, |z| < 1\}$. We also assume, for simplicity, that it is bounded away from zero .
\item[H2.]
We assume that $\mu$ has finite moments, i.e. there exists a $t>1$ such that, 
$\int |z|^t \d\mu(z) = \bbE[|d_{\ell}|^t] < \infty$.
\item[H3.]
$m<0$.
\item[H4.] There exist $C>0$ and $\rho \in (0,1]$ such that, for all $h>0$,
\begin{equation}
\sup_{x >0} \mu(\{z \in \C | x <  |z| < x+h\}) \leq C h^{\rho}.
\end{equation} 
\end{enumerate}

Our main result is that, under these assumptions, the least singular value $s_{min}(A)$ decays to zero as a negative power of $N$, when $N$ is large, up to logarithmic corrections (we believe these corrections are merely technical conveniences and should not be relevant). We also show that the order of magnitude of $s^2_{min}(A)$ depends on the value of the unique positive number $\theta$ such that
\begin{equation}
\int |x|^{2\theta} d\mu(x)= 1.
\end{equation}

We will also use the following notation. For a permutation $\sigma$, and $k>0$, define 
\begin{equation}
L(k, \sigma) = \sum_{i=1}^{K(\sigma)} e^{- k N_i}
\end{equation}

\begin{theorem}\label{NegativeDrift1}
Under the assumptions H1-H4 above, we consider first the case where $\theta<1$. 
There exist constants $k_0 >0$ and $\delta_0 >0$, such that for every $\delta<\delta_0$ and for every $t>0$,
\begin{enumerate}
\item 
\begin{equation}
\bbP_D [ s^2_{min}(A) \leq t] \leq C(\delta) L(k_0,\sigma) t^{\delta} + CN t^{\theta} (\log\frac{1}{t})^{\theta}
\end{equation}

\item
This obviously implies the Rudelson-Vershynin type estimate
\begin{equation}
\bbP_D [ s^2_{min}(A) \leq t] \leq C N t^{\alpha}
\end{equation}
with any $\alpha < \min(\theta, \delta)$

\item If we consider a sequence of permutations $\sigma_N$ such that $L(k,\sigma_N) = o( N^{\frac{\delta}{\theta}})$, then the sequence of distributions of the random variables $ (N^{\frac{1}{\theta}} \log N s^2_{min}(A))^{-1} $ is tight.
\begin{equation}
\limsup_{ N \to \infty} \bbP_D [ s^2_{min}(A) \leq \frac{u}{N^{\frac{1}{\theta}} \log N}] \leq C u^{\theta}(\log \frac{1}{u})^{\theta}
\end{equation}

\item In particular, if the permutations $\sigma_N$ are chosen randomly, say under the uniform measure $\bbP_N$ on the symmetric group, then the sequence of distributions of the random variables $ (N^{\frac{1}{\theta}} s^2_{min}(A))^{-1} $ is tight under the product measure $\bbP_N \times \bbP_D$.
\begin{equation}
\limsup_{ N \to \infty} \bbP_N \times \bbP_D [ s^2_{min}(A) \leq \frac{u}{N^{\frac{1}{\theta}} \log N}] \leq C u^{\theta}(\log \frac{1}{u})^{\theta}
\end{equation}

\end{enumerate}
\end{theorem}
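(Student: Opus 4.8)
The plan is to reduce the statement, via Theorem~\ref{CycleDecomposition} and the lower bound of Theorem~\ref{ExplicitLowerBounds-1Cycle}, to two tail estimates for a single cycle, and then to recognize those as questions about exponential functionals of a random walk with negative drift. Since $s^2_{min}(A)=\min_i s^2_{min}(A_i)\ge\min_i 1/\rho_{N_i}(D_i)$ and $\rho_{N_i}=2(\rho^{(1)}_{N_i}+\rho^{(2)}_{N_i})$, a union bound gives
\[
\bbP_D[s^2_{min}(A)\le t]\ \le\ \sum_{i=1}^{K(\sigma)}\Big(\bbP_D\big[\rho^{(1)}_{N_i}(D_i)\ge \tfrac1{4t}\big]+\bbP_D\big[\rho^{(2)}_{N_i}(D_i)\ge \tfrac1{4t}\big]\Big).
\]
Inside a single cycle I set $X_\ell=2\log|d_\ell|$ and $S_n=X_1+\cdots+X_n$; then $|\beta_{k,m}(D)|^2=\exp(S_m-S_{k-1})$, H3 gives $\bbE X_\ell=2m<0$, and the defining relation of $\theta$ gives $\bbE[e^{\theta X_\ell}]=\int|x|^{2\theta}d\mu=1$, so $\theta$ is precisely the Cram\'er exponent of this negative-drift walk. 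Consequently $\sum_{k}|\beta_{1,k-1}|^2=\sum_{j=0}^{n-1}e^{S_j}=:A_n$, while $\sum_m|\beta_{m+1,n}|^2=:B_n$ has the same law as $A_n$ (apply the time-reversal of the walk, legitimate since the increments are i.i.d.); both are finite ``perpetuities'' that stay bounded in $n$. Similarly, after reindexing, $\rho^{(2)}_n(D)=\sum_{k=1}^n\widehat W_k$ with $\widehat W_k=\sum_{r=0}^{\,n-k}e^{\widehat S^{(k)}_r}$ a truncated perpetuity of a fresh walk started at position $k$ (so $\widehat W_k\le W^{(k)}_\infty:=\sum_{r\ge0}e^{\widehat S^{(k)}_r}$), and $c_0(D)=|(-1)^n-\prod_{\ell}d_\ell|^2$, whose modulus is $e^{S_n/2}$, the endpoint of the walk.

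For the $\rho^{(2)}$ contribution I would use the perpetuity tail estimate for negative drift — this is where the result of Appendix~\ref{appendix} enters — in the form $\bbP[W_\infty>x]\le Cx^{-\theta}(\log x)^{\theta}$ (the slowly varying correction being the price of only assuming H2 and H4 instead of the usual non-lattice hypotheses), together with a truncation argument. Writing $\rho^{(2)}_n\le\sum_{k=1}^n W^{(k)}_\infty$ and splitting each $W^{(k)}_\infty$ at level $K$, one bounds $\bbP[\exists k:\,W^{(k)}_\infty>K]\le nCK^{-\theta}(\log K)^\theta$ by a union bound, and $\bbP[\sum_k W^{(k)}_\infty\indi_{W^{(k)}_\infty\le K}\ge\tfrac1{4t}]$ by Markov's inequality; the latter is legitimate since, although $\bbE W_\infty=\infty$ when $\theta<1$, one has $\bbE[W_\infty\indi_{W_\infty\le K}]\le CK^{1-\theta}(\log K)^\theta$. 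Optimizing $K\asymp 1/t$ yields $\bbP_D[\rho^{(2)}_{N_i}(D_i)\ge\tfrac1{4t}]\le CN_i\,t^{\theta}(\log\tfrac1t)^{\theta}$, and summing over cycles with $\sum_i N_i=N$ produces the term $CNt^{\theta}(\log\tfrac1t)^{\theta}$.

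For the $\rho^{(1)}$ contribution, write $\rho^{(1)}_{n}=A_nB_n/c_0(D)$ and analyse it according to the size of the determinant. On $\{c_0(D_i)>\tau\}$ the event $\{\rho^{(1)}_{N_i}\ge\tfrac1{4t}\}$ forces $A_{N_i}B_{N_i}>\tau/(4t)$; this is controlled by the joint tail of the forward and reversed perpetuities of a negative-drift walk — here it is essential that this product is \emph{not} governed by the crude union bound (which would only give a power $t^{\theta/2}$), but rather that a simultaneous excursion of the walk above its start and of its endpoint above its minimum is needed, which keeps the exponent $\theta$ up to logarithmic factors. On $\{c_0(D_i)\le\tau\}$ one needs the small-ball estimate
\[
\bbP_D\big[c_0(D_i)\le s\big]=\bbP_D\big[\,|(-1)^{N_i}-\textstyle\prod_{\ell\in C_i}d_\ell|\le\sqrt s\,\big]\ \le\ C(\delta)\,e^{-k_0 N_i}\,s^{\delta}\qquad(\delta<\delta_0),
\]
which rests on two facts: first, $\{c_0(D_i)\le s\}\subset\{|\prod_\ell d_\ell|\ge1/2\}$ is a large-deviation event for the negative-drift walk $S_n$, contributing the factor $e^{-k_0 N_i}$ with $k_0=-\log\min_{\lambda>0}\bbE[|d_\ell|^{\lambda}]>0$; second, conditionally on that event the product $\prod_\ell d_\ell$ must be localized within $\sqrt s$ of $\pm1$, which is a genuine anticoncentration statement handled through H4 (controlling the modulus) and the corresponding anticoncentration of the sum of phases. \emph{Carrying the exponential gain $e^{-k_0 N_i}$ while simultaneously extracting the power $s^{\delta}$ — a combination the naive ``condition on a single coordinate'' argument destroys, since the tilted measure inflates negative moments of $|d_\ell|$ — is the main obstacle, and is exactly the new input of Appendix~\ref{appendix}.} Coupling the two scales appropriately (e.g.\ conditioning on the walk path, which freezes $A_{N_i},B_{N_i}$ and the modulus $e^{S_{N_i}/2}$ and leaves only the phases random), choosing $\tau$, and summing over cycles using $\sum_i e^{-k_0 N_i}=L(k_0,\sigma)$ and $K(\sigma)\le N$, then gives the term $C(\delta)L(k_0,\sigma)t^{\delta}$ and completes part (1).

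Parts (2)--(4) follow quickly. For (2), bound $L(k_0,\sigma)\le K(\sigma)\le N$ and absorb the logarithm into an arbitrarily small power of $t$ via $(\log\tfrac1t)^{\theta}\le C_\varepsilon t^{-\varepsilon}$, so the bound of (1) is at most $CNt^{\min(\delta,\,\theta-\varepsilon)}$, i.e.\ $CNt^\alpha$ for any $\alpha<\min(\theta,\delta)$. For (3), insert $t=u/(N^{1/\theta}\log N)$: the second term of (1) converges as $N\to\infty$ to $C\theta^{-\theta}u^{\theta}\le Cu^{\theta}(\log\tfrac1u)^{\theta}$ for $u$ small, while the first term equals $C(\delta)L(k_0,\sigma_N)\,u^{\delta}\big/\big(N^{\delta/\theta}(\log N)^{\delta}\big)$, which tends to $0$ exactly under the hypothesis $L(k_0,\sigma_N)=o(N^{\delta/\theta})$; tightness of $(N^{1/\theta}\log N\,s^2_{min}(A))^{-1}$ is then immediate from the definition. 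For (4), take $\bbE_{\bbP_N}$ of the inequality in (1) and use the classical identity that a uniform random permutation of $[N]$ has on average $\ell^{-1}$ cycles of length $\ell$, so that $\bbE_{\bbP_N}[L(k_0,\sigma_N)]=\sum_{\ell=1}^N\ell^{-1}e^{-k_0\ell}\le-\log(1-e^{-k_0})=O(1)=o(N^{\delta/\theta})$, and conclude exactly as in (3).
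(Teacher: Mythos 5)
Your overall plan coincides with the paper's: reduce to a single cycle, encode $|\beta_{k,m}|^2 = e^{S_m - S_{k-1}}$ via the negative-drift walk $S_n$, and split the lower bound of Theorem~\ref{ExplicitLowerBounds-1Cycle} into a tail estimate for $\rho^{(1)}$ (the paper's $X_N$, Lemma~\ref{X_N}) and one for $\rho^{(2)}\le T_N$ (Theorem~\ref{T_N2}), then union over cycles. Two remarks on the details are worth making.

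For $\rho^{(2)}$ you take a genuinely different, and cleaner, route. The paper introduces ladder epochs, bounds $T_N\le K(c)\,R_{i(N)}\sum_i U_i$, and controls the maximal excursion length $R$ and the heavy-tailed i.i.d.\ sum $\sum_i U_i$ separately (the latter via Borovkov); the logarithm in the final bound comes from the $R_{i(N)}\sim\log N$ factor in this decomposition. You instead truncate each (overlapping, non-independent) perpetuity $W^{(k)}_\infty$ at level $K\asymp 1/t$, bound the exceedance event by a union bound, and Markov the truncated sum — legitimate despite the dependence. Note that with the sharp perpetuity tail $\bbP[U_\infty>x]\sim C/x^\theta$ that the paper itself uses via Goldie (Lemma~\ref{tailUinfty}; H4 already forces $\nu$ non-lattice), your truncation scheme actually yields $\bbP[\rho^{(2)}_{N_i}\ge\tfrac1{4t}]\le CN_i t^{\theta}$ with no logarithm at all, a slightly sharper intermediate bound than the paper's — the $(\log x)^\theta$ you insert by hand into the perpetuity tail is an unnecessary tax.

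Your $\rho^{(1)}$ sketch contains two misdirections, neither fatal but both worth correcting. (a) You assert that a simultaneous-excursion argument is \emph{essential} to beat the crude union bound on the product $(1+U_\infty)(1+\hat U_\infty)$ and keep exponent $\theta$. It is not: the paper's Lemma~\ref{X_N} uses exactly the crude union bound (giving $u^{-(1-\alpha)\theta/2}$ for that factor), pairs it with the small-ball estimate $\bbP[c_0(D)\le v]\le Ce^{-kN}v^{\gamma/2}$ of Lemma~\ref{c0}, and optimizes the split to obtain the \emph{small} exponent $\delta=\theta\gamma/(2\gamma+\theta)<\theta/2$; the theorem's conclusion only needs some $\delta<\delta_0$. (b) Your small-ball argument for $c_0(D)$ invokes ``anticoncentration of the sum of phases,'' but no phase information is used or needed: $c_0(D)\le s$ already forces $\bigl||\prod_\ell d_\ell|-1\bigr|\le\sqrt s$, an interval estimate on the real walk $S_N=2\sum_\ell\log|d_\ell|$ alone, which is precisely what Lemmas~\ref{locreg} and~\ref{tailSN} extract from H4 by tilting. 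You do correctly identify the simultaneous exponential gain $e^{-k_0 N_i}$ and small-ball power $s^\delta$ as the appendix's new contribution, and parts (2)--(4) are handled as in the paper, including $\bbE_{\bbP_N}[L(k_0,\sigma_N)]=\sum_{\ell\le N}\ell^{-1}e^{-k_0\ell}=O(1)$ for part (4).
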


\begin{remark}
For a fixed distribution $\mu$, the bound we obtain in the first item of this result, depends only on the conjugation class of $\sigma$, i.e. its cycle structure as it obviously should, since the distribution of the diagonal entries is exchangeable.
The constants $k_0>0$ and $\delta_0$ will be described below. They depend on the distribution $\mu$.
Moreover, using the fact that $\sum_{i=1}^{K(\sigma)} N_i =N$ and the inequality between an arithmetic and a geometric mean, we see easily that
\begin{equation}
K(\sigma) e^{-k\frac{N}{K(\sigma)}} \leq L(k,\sigma) \leq K(\sigma) \leq N
\end{equation}
\end{remark}

We now study the case where $\theta >1$.

\begin{theorem}\label{NegativeDrift2}
Under the assumptions H1-H4 above, and if $\theta > 1$,
\begin{enumerate}
\item
 There exist constants $k_0 >0$ and $\delta_0 >0$, such that for every $\delta<\delta_0$, $T>0$, and for every $t < \frac{T}{N \log N}$, 
\begin{equation}
\bbP_D [ s^2_{min}(A) \leq t] \leq C(\delta) L(k_0,\sigma) t^{\delta} + C(T) N t^{\theta} (\log \frac{1}{t})^{\theta}
\end{equation}
\item
This implies the Rudelson-Vershynin type estimate
\begin{equation}
\bbP_D [ s^2_{min}(A) \leq t] \leq C N t^{\alpha}
\end{equation}
with any $\alpha < \min(\delta, 1)$

\item If we consider a sequence of permutations $\sigma_N$ such that $L(k,\sigma_N) = o( N^{\frac{\delta}{\theta}})$, then the sequence of random variables $ (N \log N s^2_{min}(A))^{-1} $ converges to zero in probability (and even a.s if $\theta >2$). Indeed
\begin{equation}
\bbP_D [ s^2_{min}(A) \leq \frac{u}{N \log N}] \leq C \frac{u^{\theta}}{N^{\theta-1}}
\end{equation}
\item In particular, if the permutations $\sigma_N$ are chosen randomly, say under the uniform measure $\bbP_N$ on the symmetric group, then the sequence of distributions of the random variables $ (N \log N s^2_{min}(A))^{-1} $ is tight under the product measure $\bbP_N \times \bbP_D$.
\begin{equation}
\limsup_{ N \to \infty} \bbP_N \times \bbP_D [ s^2_{min}(A) \leq \frac{u}{N \log N}] \leq C u^{\theta}(\log \frac{1}{u})^{\theta}
\end{equation}

\end{enumerate}
\end{theorem}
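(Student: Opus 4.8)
The plan is to combine the deterministic reduction of Theorems~\ref{CycleDecomposition} and \ref{ExplicitLowerBounds-1Cycle} with a sharp analysis of exponential functionals of the random walk $S_j=\sum_{\ell\le j}\log|d_\ell|$, which has negative drift by H3; the probabilistic heart of the matter is a tail estimate for such functionals, provided by Appendix~\ref{appendix}.

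\emph{Reduction to one cycle, and reformulation.} By Theorem~\ref{CycleDecomposition}(2) and Theorem~\ref{ExplicitLowerBounds-1Cycle}(1), whenever $A$ is invertible we have $s^2_{\min}(A)=\min_{1\le i\le K(\sigma)}s^2_{\min}(A_i)\ge\min_i 1/\rho_{N_i}(D_i)$, so a union bound gives $\bbP_D[s^2_{\min}(A)\le t]\le\sum_{i=1}^{K(\sigma)}\bbP_D[\rho_{N_i}(D_i)\ge 1/t]$, and since the $d_\ell$ are i.i.d.\ the $i$-th term depends only on $N_i$. Thus it suffices to prove that there are $k_0,\delta_0>0$ so that for every $\delta<\delta_0$, $T>0$ and $t<T/(n\log n)$,
\begin{equation}\label{eq:singlecycle}
\bbP_D\big[\rho_n(D)\ge 1/t\big]\ \le\ C(\delta)\,e^{-k_0 n}\,t^{\delta}\ +\ C(T)\,n\,t^{\theta}\big(\log\tfrac1t\big)^{\theta};
\end{equation}
summing \eqref{eq:singlecycle} over the cycles and using $\sum_i e^{-k_0N_i}=L(k_0,\sigma)$ and $\sum_i N_i=N$ then yields item~(1). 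Set $X_\ell=\log|d_\ell|$, $S_0=0$, $S_j=X_1+\dots+X_j$. By H1 the $X_\ell$ are bounded below, by H2 they have a finite right exponential moment, by H3 their mean $m$ is negative, and by H4 together with H1 their L\'evy concentration function obeys $\sup_x\bbP[X_\ell\in[x,x+h]]\le Ch^{\rho}$; $\theta$ is the Cram\'er root $\E{e^{2\theta X_\ell}}=\int|x|^{2\theta}\d\mu=1$, and since $\theta>1$ convexity of $s\mapsto\E{e^{sX_\ell}}$ forces $\E{e^{2X_\ell}}<1$. A direct computation gives $|\beta_{k,m}(D)|^2=e^{2(S_m-S_{k-1})}$, whence $c_0(D)=|1-\beta_{1,n}(D)|^2$, $\rho^{(2)}_n(D)=\sum_{1\le i\le j\le n}e^{2(S_j-S_i)}$, and $\rho^{(1)}_n(D)=c_0(D)^{-1}\big(\sum_{l=0}^{n-1}e^{2S_l}\big)\big(\sum_{l=0}^{n-1}e^{2(S_n-S_{n-l})}\big)=:FB/c_0(D)$, the second factor being an exponential functional of the (equidistributed) time--reversed walk.

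\emph{The dominant term $\rho^{(2)}_n$.} Since $\E{e^{2X_\ell}}<1$, the truncated functional $\sum_{l=0}^n e^{2S_l}$ has bounded mean and $\rho^{(2)}_n(D)$ has mean of order $n$. The result of Appendix~\ref{appendix} is the corresponding sharp tail bound: for a negative--drift walk with Cram\'er root $\theta>1$ there is $C_0$ such that for every $T>0$ and $x\ge C_0\,n\log n/T$,
\[
\bbP\Big[\textstyle\sum_{1\le i\le j\le n}e^{2(S_j-S_i)}\ge x\Big]\ \le\ C(T)\,n\,x^{-\theta}(\log x)^{\theta},
\]
together with $\bbP[\sum_{l\ge0}e^{2S_l}\ge x]\le Cx^{-\theta}$ for the infinite single sum; the factor $n$ is the ``one big jump'' (a single sub--interval behaves like the infinite perpetuity $\sum_{l\ge0}e^{2S_l}$) and the $(\log x)^{\theta}$ the cost of truncating to horizon $n$. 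Taking $x=1/(4t)$ gives $\bbP_D[\rho^{(2)}_n(D)\ge 1/(4t)]\le C(T)\,n\,t^{\theta}(\log\tfrac1t)^{\theta}$ in the required range, the second term of \eqref{eq:singlecycle}. Items~(2)--(4) then follow from \eqref{eq:singlecycle} by routine manipulations: (2) by choosing $\delta<\min(\delta_0,1)$, using $L(k_0,\sigma)\le N$ and $t^{\theta}(\log\tfrac1t)^{\theta}\le Ct^{\alpha}$ for small $t$ (the inequality being trivial when $t\ge T/(N\log N)$); (3) by substituting $t=u/(N\log N)$, whereupon the second term is $\sim u^{\theta}N^{1-\theta}$ and the first is negligible under the hypothesis $L(k_0,\sigma_N)=o(N^{\delta/\theta})$, with the almost--sure statement for $\theta>2$ following by summability in $N$ and Borel--Cantelli; (4) by averaging \eqref{eq:singlecycle} over the uniform measure $\bbP_N$ and using that a uniform permutation has on average $1/j$ cycles of length $j$, so $\bbE_{\bbP_N}[L(k_0,\sigma_N)]=\sum_{j\le N}j^{-1}e^{-k_0j}<\infty$.

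\emph{The term $\rho^{(1)}_n$ and the r\^ole of $c_0$.} Bound $\bbP_D[\rho^{(1)}_n(D)\ge 1/(4t)]\le\bbP[FB\ge c_*/(4t)]+\bbP[c_0(D)\le 4t\,FB,\ FB< c_*/(4t)]$ for a fixed small $c_*$. As $F$ and $B$ are dominated by the forward and reversed infinite functionals, each essentially determined by a disjoint part of the path, the product has tail $\bbP[FB\ge x]\le Cx^{-\theta}(\log x)^{\theta}$, so the first probability is $\le C\,t^{\theta}(\log\tfrac1t)^{\theta}$. On $\{FB<c_*/(4t)\}$ the constraint forces $c_0(D)<c_*$, hence $|\prod_\ell d_\ell|=e^{S_n}$ lies in a fixed small neighbourhood of $1$, so $S_n$ lies in a fixed short interval about $0$ --- a large deviation bounded by $Ce^{-k_1 n}$ by the Cram\'er estimate (using H2, H3). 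On the other hand, conditioning on $d_1,\dots,d_{n-1}$, using H4 to bound the concentration of $|d_n|$ and H2 (Markov) to bound its large values, and interpolating the two bounds so the dependence on $\prod_{\ell<n}|d_\ell|$ cancels, one gets $\bbP[c_0(D)<\varepsilon\mid d_1,\dots,d_{n-1}]\le C\varepsilon^{\rho'}$ with $\rho'>0$. Carrying the random threshold $\varepsilon=4t\,FB$ through this computation (legitimate since $\E{F^{\rho'}}<\infty$ for $\rho'$ small), and then interpolating with the exponential bound, yields $\bbP[c_0(D)\le 4t\,FB,\ FB<c_*/(4t)]\le C(\delta)\,e^{-k_0 n}\,t^{\delta}$ for any $\delta<\delta_0$, where $\delta_0$ is essentially the anti--concentration exponent $\rho'$ of $c_0(D)$ and $k_0<k_1$. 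Together with the bound on $\rho^{(2)}_n$ this proves \eqref{eq:singlecycle}, completing item~(1).

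\emph{Main obstacle.} The crux is the random--walk estimate of Appendix~\ref{appendix}: deriving, for the negative--drift walk $(S_j)$, the tail of the truncated double exponential sum $\sum_{1\le i\le j\le n}e^{2(S_j-S_i)}$ with the \emph{exact} Cram\'er exponent $\theta$ and only a logarithmic loss from the finite horizon, uniformly over the admissible range of levels; this requires making the ``one big jump'' heuristic quantitative while controlling the strong dependence among the many overlapping sub--walk functionals $\sum_{j\ge i}e^{2(S_j-S_i)}$. A secondary, more technical, difficulty --- on the rare event $\{c_0(D)<c_*\}$ --- is the joint control of $c_0(D)$, which involves the phases of the $d_\ell$ (for which no anti--concentration is available), together with the modulus functionals $F,B$ on the resulting random--walk bridge; this is where the interplay of the Cram\'er bound (H2, H3) with the anti--concentration of $|d_n|$ (H4) produces the factor $t^{\delta}$.
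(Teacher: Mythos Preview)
Your overall architecture matches the paper's: reduce to a single cycle, use the lower bound $s_{\min}^2\ge 1/\rho_n$, split $\rho_n=2\rho^{(1)}_n+2\rho^{(2)}_n$, and control $\rho^{(2)}_n$ by the tail estimate for $T_N$ from Appendix~\ref{appendix}. The derivation of items (2)--(4) from item (1) is also essentially as in the paper.

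The gap is in your treatment of $\rho^{(1)}_n=FB/c_0$. You split on $\{FB\ge c_*/(4t)\}$ versus $\{c_0\le 4tFB,\ FB<c_*/(4t)\}$ with a \emph{fixed} threshold $c_*$, and then assert that $\bbP[FB\ge x]\le Cx^{-\theta}(\log x)^{\theta}$ because $F$ and $B$ are ``essentially determined by disjoint parts of the path''. This near-independence is not established (both $F$ and $B$ depend on the entire path of length $n$), and without it the only honest bound is the union bound $\bbP[FB\ge x]\le 2\,\bbP[F\ge\sqrt{x}]\le Cx^{-\theta/2}$. With that weaker bound your first term becomes $Ct^{\theta/2}$ with \emph{no} exponential factor in $n$, and this cannot be absorbed into either summand of \eqref{eq:singlecycle}: for a single long cycle the comparison with $n\,t^{\theta}(\log\tfrac1t)^{\theta}$ fails as soon as $\theta>2$, and it never fits into $e^{-k_0 n}t^{\delta}$. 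So as written the argument does not close.

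The paper avoids this by a different split (Lemma~\ref{X_N}): one writes
\[
\bbP[X_N\ge u]\ \le\ \bbP\big[(1+U_\infty)(1+\hat U_\infty)\ge u^{1-\alpha}e^{k'N/\gamma}\big]\ +\ \bbP\big[c_0(D)^{-1}\ge u^{\alpha}e^{-k'N/\gamma}\big],
\]
i.e.\ the threshold for $FB$ carries an \emph{exponentially large} factor $e^{k'N/\gamma}$. Then the crude union-bound tail $x^{-\theta/2}$ already produces a factor $e^{-k'\theta N/(2\gamma)}$, and the whole $X_N$ contribution lands in the $e^{-k_0 n}t^{\delta}$ term of \eqref{eq:singlecycle}. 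No independence of $F$ and $B$ is needed. Correspondingly, the anti-concentration of $c_0(D)$ is obtained not by conditioning on $d_1,\dots,d_{n-1}$ and interpolating (your ``interpolation so the dependence on $\prod_{\ell<n}|d_\ell|$ cancels'' is left vague and would have to cope with the random radius $|\prod_{\ell<n}d_\ell|^{-1}$, which is typically large under the negative drift), but by an exponential tilt: one shows (Lemmas~\ref{locreg} and \ref{tailSN}) that the law of $S_N$ under the tilted measure $\nu_{\lambda_0}$ inherits a uniform local H\"older bound from H4, and then untilts to get $\bbP[S_N\in(x,x+h)]\le C e^{-kN}h^{\gamma}$, which yields $\bbP[c_0(D)\le v]\le Ce^{-kN}v^{\gamma/2}$ in one stroke.
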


We also give upper bounds on $s^2_{min}(A)$, which show no transition with the value of $\theta$, but are probably sharp only for $\theta <1$.

\begin{theorem}\label{NegativeDrift3}
Under the assumptions H1-H3 above, and for any value of the exponent $\theta>0$, $s^2_{min}(A)$ is at most of order $N^{- \frac{1}{\theta}}$.
More precisely,
\begin{enumerate}
\item We have the following estimate for the upper-tail of $s_{min}(A)$. There exist two constants $k>0$ and $C>0$, and, for any $T>0$ there exists a constant $C(T)$, such that for $0< t < T$,
\begin{equation}
\bbP_D[ s^2_{min}(A) \geq t] \leq C(T) e^{-NCt^{\theta}}
\end{equation}
\item
In particular, for any $u>0$,
\begin{equation}
\bbP_D[ s^2_{min}(A) \geq \frac{u}{N^{\frac{1}{\theta}}}] \leq  e^{-Cu^{\theta}}
\end{equation}
\end{enumerate}
\end{theorem}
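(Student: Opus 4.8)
The plan is to reduce the estimate to a single cycle and then to a sharp bound on the maximal rise of a random walk with negative drift. By Theorem~\ref{CycleDecomposition}, $s^2_{\min}(A)=\min_{1\le i\le K(\sigma)}s^2_{\min}(A_i)$ with $A_i=D_i+U_{N_i}$; and since the $d_\ell$ are i.i.d.\ and the cycles of $\sigma$ partition $\{1,\dots,N\}$, the blocks $A_1,\dots,A_{K(\sigma)}$ are independent, so
\[
\bbP_D\bigl[s^2_{\min}(A)\ge t\bigr]=\prod_{i=1}^{K(\sigma)}\bbP_D\bigl[s^2_{\min}(A_i)\ge t\bigr].
\]
As $\sum_i N_i=N$, it then suffices to prove a single-cycle bound of the form $\bbP_D[s^2_{\min}(D_n+U_n)\ge t]\le C\,e^{-c\,n\,t^{\theta}}$ for $0<t<T$ and multiply over the cycles; the short cycles, for which this bound is $\ge1$ and hence empty, get absorbed into the constant $C(T)$, and the genuine content comes from the cycles of macroscopic length, consistently with the claim that $s^2_{\min}(A)$ is of order $N^{-1/\theta}$.

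For the single-cycle bound I would start from the deterministic inequality $s^2_{\min}(D_n+U_n)\le c_0(D)/\gamma_n(D)$ of Theorem~\ref{ExplicitLowerBounds-1Cycle}(1). Retaining in $\gamma_n(D)=\max_k\sum_m|\beta_{k,m}(D)|^2$ only one term, and writing $S_m:=\sum_{\ell=1}^m 2\log|d_\ell|$ (so $S_0=0$), one has
\[
\gamma_n(D)\ \ge\ \max_{1\le k\le m\le n}\ \prod_{\ell=k}^m|d_\ell|^2\ =\ \exp\Bigl(\,\max_{0\le j<m\le n}(S_m-S_j)\,\Bigr).
\]
By H3 the walk $(S_m)$ has negative drift $2m<0$, and since $\bbE[e^{\theta\xi}]=\bbE[|d_\ell|^{2\theta}]=1$ with $\xi=2\log|d_\ell|$ (and $\bbE[|d_\ell|^{2s}]<\infty$ slightly beyond $s=\theta$ by H2), its Cram\'er exponent equals $\theta$. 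On the event $\{\prod_\ell|d_\ell|\le1\}=\{S_n\le0\}$ one has $c_0(D)=|(-1)^n-\prod_\ell d_\ell|^2\le(1+\prod_\ell|d_\ell|)^2\le4$, so there $\{s^2_{\min}(D_n+U_n)\ge t\}$ forces $\max_{0\le j<m\le n}(S_m-S_j)\le\log(4/t)$; moreover $\bbP_D[S_n>0]=\bbP[\prod_\ell|d_\ell|>1]\le\rho^{\,n}$ with $\rho:=\inf_{0<s<\theta}\bbE[|d_\ell|^{2s}]<1$, by convexity of $s\mapsto\bbE[|d_\ell|^{2s}]$ (equal to $1$ at $0$ and at $\theta$, with negative slope at $0$). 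Hence
\[
\bbP_D\bigl[s^2_{\min}(D_n+U_n)\ge t\bigr]\ \le\ \rho^{\,n}+\bbP\Bigl[\max_{0\le j<m\le n}(S_m-S_j)\le\log(4/t)\Bigr].
\]

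The crux is the random walk estimate of Appendix~\ref{appendix}: for a random walk with negative drift and Cram\'er exponent $\theta$, one has $\bbP[\max_{0\le j<m\le n}(S_m-S_j)\le\ell]\le C\exp(-c\,n\,e^{-\theta\ell})$. The route I would take is to cut the path at the successive descending ladder epochs $0=\tau_0<\tau_1<\cdots$, which for a negative-drift walk are a.s.\ finite and have exponential moments; the increments $S_{\tau_k}-S_{\tau_{k-1}}$ are i.i.d.\ and negative, so, for a fixed $a\in(0,1)$, except on a set of probability $\le e^{-c_1 n}$ there are at least $an$ completed ladder steps inside $[0,n]$, and on each of them the walk performs an excursion above its current running minimum of height $H_k$, an i.i.d.\ copy of $H_1$, with $\bbP[H_1>\ell]\ge c_2 e^{-\theta\ell}$ for $\ell$ large --- this being the classical Cram\'er--Lundberg lower bound for $\bbP[\sup_m S_m>\ell]$, obtained by tilting the increment law by $e^{\theta\xi}$ (which makes the drift positive) and using H2 to control the overshoot. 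Since $\max_{0\le j<m\le n}(S_m-S_j)\ge\max_k H_k$ over the completed excursions, the event lies in $\{\text{fewer than }an\text{ ladder steps in }[0,n]\}\cup\{H_1,\dots,H_{\lceil an\rceil}\le\ell\}$, whose probability is at most $e^{-c_1 n}+(1-c_2 e^{-\theta\ell})^{an}\le C e^{-c\,n\,e^{-\theta\ell}}$. Taking $\ell=\log(4/t)$ turns this into $C e^{-c\,n\,(t/4)^{\theta}}$, which combined with $\rho^{\,n}\le e^{-c'n t^{\theta}}$ (valid for $t<T$, after shrinking $c'$ if needed) is the single-cycle bound; multiplying over the cycles and using $\sum_i N_i=N$ gives $\bbP_D[s^2_{\min}(A)\ge t]\le C(T)e^{-cN t^{\theta}}$, and item~(2) follows by putting $t=u/N^{1/\theta}$.

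The main obstacle is entirely in the random walk estimate of Appendix~\ref{appendix}: one needs the sharp exponent $\theta$ together with the linear-in-$n$ rate $n e^{-\theta\ell}$, uniformly over the relevant range of $\ell$, which forces the combination of the renewal/excursion decomposition above with precise Cram\'er--Lundberg control of the supremum and of the overshoot of a negative-drift walk. Everything else --- the reduction to a single cycle (modulo the harmless bookkeeping for short cycles noted above), the deterministic inequality of Theorem~\ref{ExplicitLowerBounds-1Cycle}, and the large-deviation bound on $\prod_\ell|d_\ell|$ --- is routine.
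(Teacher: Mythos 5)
Your proposal follows the paper's own route essentially step by step: after reducing to the independent cycle blocks, you use the deterministic bound $s^2_{\min}(D+U_n)\le c_0(D)/\gamma_n(D)\le c_0(D)e^{-M_n}$, control $c_0(D)$ by a one-sided large-deviation estimate for the walk $S_n$, bound $\bbP[M_n\le\ell]$ via the ladder-epoch decomposition combined with the Cram\'er--Lundberg tail of excursion maxima, and multiply over blocks; the paper does exactly this, the only real difference being that it cites Iglehart for the tail of $V=\max_{1\le m\le K_1}S_m$ and for the comparison $M_N\ge\max_{k<i(N)}V_k$ rather than rederiving them, as you sketch.

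One caveat, which affects the paper's own proof as much as yours: multiplying single-cycle bounds of the form $C(T)e^{-cN_it^{\theta}}$ over all cycles gives a prefactor $C(T)^{K(\sigma)}$, not $C(T)$, and your remark that short cycles ``get absorbed into $C(T)$'' does not by itself resolve this, since the number of cycles with $N_it^{\theta}\lesssim 1$ can be of order $N$. Indeed, as stated the claim cannot hold uniformly in the conjugacy class of $\sigma$ (take $\sigma=\mathrm{id}$ and $\mu$ supported away from $-1$: then $s^2_{\min}(A)=\min_i|d_i+1|^2$ is bounded below deterministically, so the left-hand side equals $1$ for all $t$ below a fixed threshold while the claimed right-hand side tends to $0$ with $N$). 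The argument as you give it is complete for a single cycle, and more generally whenever the cycles of length $\gtrsim t^{-\theta}$ carry a positive fraction of $N$, but a restriction on $\sigma$ of this kind is needed for the stated estimate to be true at all; this gap is present in the paper's Section~\ref{negd3} as well and is not specific to your write-up.
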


\begin{remark}
A closer analogue to the Rudelson-Vershynin result would be to study the case where the permutation $\sigma$ is chosen randomly, and the diagonal matrix $D$ is fixed, say under the assumption that the empirical distribution $\mu_N= \frac{1}{N} \sum_{i=1}^N \delta_{d_i}$ is close to a measure $\mu$, with appropriate assumptions on the measure $\mu$. If the permutation $\sigma$ is chosen uniformly at random, for cycles $C_i$ of length $N_i$ much smaller than $N$, the case of deterministic $D_i$ should be close to the case where the diagonal entries are sampled in an i.i.d fashion from the empirical distribution $\mu_N= \frac{1}{N} \sum_{i=1}^N \delta_{d_i}$. Indeed sampling $N_i$ elements from the measure $\mu_N$ with or without repetition should not make much difference, if $N_i$ is much smaller than $N$. For cycles of length comparable to $N$, this argument does not hold obviously, and the case of random $D$ (sampling without repetition) is then simpler than the case of fixed deterministic $D$.
\end{remark}

\subsection{Organization of the paper}
This paper is organized as follows: We give the proof of Theorem~\ref{CycleDecomposition} and the explicit computation of Example~\ref{Example-Scalar-Case} in Section~\ref{sectionCD}. The proof of Theorem~\ref{ExplicitLowerBounds-1Cycle} is given in Section~\ref{sectionDB} in two steps. The lower bounds are given in Section~\ref{LowerBounds}, the upper bounds in Section~\ref{UpperBounds}. Based on Theorem~\ref{ExplicitLowerBounds-1Cycle}, the proof of Theorem~\ref{OneSided} is given in Section~\ref{section_proof_onesided}. In Section~\ref{sectioniid}, we give the proofs of the results where the entries of $D$ are i.i.d., the proofs of Theorem~\ref{NegativeDrift1} and Theorem~\ref{NegativeDrift2} are given in Section~\ref{negd1},  the proof of Theorem~\ref{NegativeDrift3} in Section~\ref{negd3}.

\section{Cycle decomposition}\label{sectionCD}
\subsection{Proof of Theorem~\ref{CycleDecomposition}}
In this section, we give the proof of Theorem~\ref{CycleDecomposition}. We start by providing the following block-decomposition result, which obviously implies Theorem~\ref{CycleDecomposition}.
\begin{theorem}\label{CycleDecomposition2}\
\begin{enumerate}
\item [a)]A is unitarily conjugate to the block-diagonal matrix $ \text{diag}(A_1, \dots, A_{N_{K(\sigma)}})$.
More precisely, there exists a permutation $\tau$ such that 
\begin{equation}
M_{\tau}AM_{\tau}^{-1}= diag(A_1, \dots, A_{N_{K(\sigma)}})
\end{equation}
\item [b)]The determinant of $A$ is given by
\begin{equation}
\det(A)=\prod_{i=1}^{K(\sigma)}\left(\prod_{\ell\in C_i}d_\ell - (-1)^{N_i}\right)
\end{equation}
\end{enumerate}
\end{theorem}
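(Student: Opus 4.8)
The plan is to prove the sharper Theorem~\ref{CycleDecomposition2}; Theorem~\ref{CycleDecomposition} is then immediate, since a permutation matrix $M_\tau$ is unitary, unitary conjugation preserves singular values, the singular values of a block-diagonal matrix are the union of those of the blocks (giving $s_{\min}(A)=\min_i s_{\min}(A_i)$), and $A$ is invertible iff $\det A\neq 0$, which by b) means each factor $\prod_{\ell\in C_i}d_\ell-(-1)^{N_i}$ is nonzero.

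For part a), I would exhibit $\tau$ explicitly. List the indices $1,\dots,N$ in the order
\[
 n_1,\sigma(n_1),\dots,\sigma^{N_1-1}(n_1),\ n_2,\sigma(n_2),\dots,\sigma^{N_2-1}(n_2),\ \dots,\ n_K,\dots,\sigma^{N_K-1}(n_K),
\]
that is, first the indices of $C_1$ in cyclic order, then those of $C_2$, and so on; let $\tau$ be the permutation carrying this list to $1,2,\dots,N$. Conjugating $A$ by $M_\tau$ relabels rows and columns by $\tau$ simultaneously, i.e.\ expresses $A$ in the reordered basis. Since $D$ is diagonal, its reordering is block-diagonal with blocks exactly the $D_i$ of the statement. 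For $M_\sigma$, the reordered matrix has a $1$ in the $(r,r')$ slot iff $\sigma$ maps the $r$-th index of the list to the $r'$-th; inside the block of $C_i$, $\sigma$ sends $\sigma^{a}(n_i)$ to $\sigma^{a+1}(n_i)$ with exponents read mod $N_i$, which is precisely the incidence pattern of $U_{N_i}$ from \eqref{U_n}, while $\sigma$ never carries an index of one cycle into another, so there are no inter-block entries. Hence $M_\tau A M_\tau^{-1}=\text{diag}(A_1,\dots,A_K)$. The one point to watch is matching the orientation conventions (the precise definition $M_\sigma(i,j)=\mathbbm 1_{\sigma(i)=j}$ and the exact shape of $U_n$), which only decides whether one writes $\tau$ or $\tau^{-1}$ and is harmless for the conclusion.

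For part b), by a) we have $\det A=\prod_{i=1}^{K}\det(D_i+U_{N_i})$, so it is enough to evaluate $\det(\text{diag}(a_1,\dots,a_n)+U_n)$. Expanding along the first column, which has only the two nonzero entries $a_1$ (top) and $1$ (bottom), gives $a_1\Delta_1+(-1)^{n+1}\Delta_2$, where $\Delta_1$ is the determinant of the upper-bidiagonal matrix with diagonal $(a_2,\dots,a_n)$, so $\Delta_1=\prod_{l=2}^n a_l$, and $\Delta_2$ is the determinant of a lower-bidiagonal matrix with all diagonal entries $1$, so $\Delta_2=1$. Thus $\det(\text{diag}(a_1,\dots,a_n)+U_n)=\prod_{l=1}^n a_l+(-1)^{n+1}=\prod_{l=1}^n a_l-(-1)^n$. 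Taking $(a_1,\dots,a_n)=(d_{n_i},\dots,d_{\sigma^{N_i-1}(n_i)})$, $n=N_i$, and multiplying over $i$ yields the stated product formula.

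The whole argument is essentially bookkeeping, and there is no real obstacle: the only mildly delicate points are pinning down the conjugation conventions in a) and the sign $(-1)^{n+1}$ in the cofactor expansion in b).
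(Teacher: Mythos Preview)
Your proposal is correct and follows essentially the same route as the paper. For part a) the paper introduces the auxiliary permutation $\sigma^{ordered}$ with consecutive-integer cycles and uses that $\sigma$ and $\sigma^{ordered}$ are conjugate to obtain $\tau$, whereas you track the relabelling of rows and columns directly; the underlying $\tau$ is the same (up to the $\tau$ vs.\ $\tau^{-1}$ convention you already flag). For part b) the paper simply asserts the identity $\det(\text{diag}(d_1,\dots,d_n)+U_n)=\prod_k d_k-(-1)^n$ as a ``simple fact,'' while you supply the cofactor-expansion verification; your computation is correct and is exactly what the paper leaves to the reader.
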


\begin{proof}[Proof of Theorem~\ref{CycleDecomposition2}]

Consider the permutation $\sigma^{ordered}$ given by its cycle decomposition

\begin{equation}
 \sigma^{ordered} = ( (1, \dots, N_1), (N_1+1, \dots, N_1+N_2), \dots, (N_1+\dots+N_{K(\sigma)-1}+1, \dots, N) )
 \end{equation}
 
 $\sigma$ and $\sigma^{ordered}$ have the same cycle lengths, so they are in the same conjugation class.
 Thus, there exists a permutation $\tau$ such that
 \begin{equation}
 \sigma = \tau \sigma^{ordered} \tau^{-1}
 \end{equation}
 
 It is in fact very simple to write explicitly the permutation $\tau$:
 \begin{equation}
 \tau(1)= n_1, \tau(2)= \sigma(n_1), \dots, \tau(N_1)= \sigma^{N_1-1}(n_1)
 \end{equation}
 Similarly, for any $1\leq i \leq K_{\sigma}$ and any $ 1 \leq k \leq N_i$
 \begin{equation}
 \tau(N_1+\dots+N_i+k)= \sigma^{k-1} (n_i)
 \end{equation}

Thus, the permutation matrix $M_{\sigma}$ can be written as
\begin{equation}
M_{\sigma} = M_{\tau}^{-1}M_{\sigma^{ordered}}M_{\tau}
\end{equation}

and we have

\begin{equation}
M_{\tau} A M_{\tau}^{-1}= M_{\tau} D M_{\tau}^{-1} + M_{\sigma^{ordered}}
\end{equation}

But it is obvious that
\begin{equation}
M_{\tau} D M_{\tau}^{-1} = \text{diag} (D_1, \dots, D_{K(\sigma)})
\end{equation}

and

\begin{equation}
M_{\sigma^{ordered}} =  \text{diag} (U_{N_1}, \dots, U_{N_{K(\sigma)}}),
\end{equation}

so that
\begin{equation}
M_{\tau} A M_{\tau}^{-1}= \text{diag} (A_1, \dots, A_{K(\sigma)})
\end{equation}
This proves the first item a) of Theorem~\ref{CycleDecomposition2}.

The second item b) is then obvious, if one notes the simple fact that
\begin{equation}
\det ( \text{diag}(d_1,\dots, d_n) +U_n) = \prod_{k=1}^n d_k - (-1)^n
\end{equation}
\end{proof}

\subsection{The scalar case}

In this section, we give the explicit computation presented in  Example~\ref{Example-Scalar-Case}:

\begin{lemma}\label{lemma-scalar-case}
 \begin{equation}
s_{\min}(D+U_N)=\varphi_N(d).
\end{equation}
\end{lemma}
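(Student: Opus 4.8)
The plan is to compute the singular values of $D + U_N$ explicitly in the scalar case $D = d I_N$, where the matrix $A := dI_N + U_N$ is a circulant-type matrix (indeed $U_N$ is the cyclic shift matrix, so $A = dI_N + U_N$ is exactly circulant). The key observation is that circulant matrices are diagonalized by the discrete Fourier transform: the eigenvectors of $U_N$ are $v_\omega = (1, \omega, \omega^2, \dots, \omega^{N-1})^T$ for $\omega$ ranging over the $N$-th roots of unity $\cU_N$, with $U_N v_\omega = \omega^{-1} v_\omega$ (or $\omega v_\omega$, depending on the orientation of $U_N$ in \eqref{U_n}; one should check the convention, but the set of eigenvalues is $\cU_N$ either way since $\cU_N$ is closed under inversion). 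Consequently $A = dI_N + U_N$ has eigenvalues $\{d + \omega : \omega \in \cU_N\}$, and since $A$ is a \emph{normal} matrix (both $I_N$ and the unitary $U_N$ commute and are normal, so $A$ is normal), its singular values are precisely the moduli of its eigenvalues, namely $\{|d + \omega| : \omega \in \cU_N\}$.

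From here the conclusion is immediate:
\begin{equation}
s_{\min}(D + U_N) = \min_{\omega \in \cU_N} |d + \omega| = \min_{\omega \in \cU_N} |{-}(-d) + \omega| = \dist(-d, \cU_N) = \varphi_N(d),
\end{equation}
where the last two equalities are just the definition \eqref{varphi} of $\varphi_N$ (using that $|{-}z + \omega| = |z - \omega|$ and that $\cU_N$ is symmetric under $\omega \mapsto -\omega$ only when $N$ is even, so more carefully one writes $\min_\omega |d+\omega|$ and notes $\{-\omega : \omega \in \cU_N\}$ need not equal $\cU_N$; but $\varphi_N(d) = \min_\omega |\omega + d|$ is literally $\min_\omega|d + \omega|$, so no symmetry is needed at all). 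So the two quantities agree on the nose.

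The steps I would carry out, in order, are: (i) identify $U_N$ with the cyclic shift and record its spectral decomposition via the Fourier basis, being careful about which power of $\omega$ appears given the explicit form in \eqref{U_n}; (ii) observe $A = dI_N + U_N$ is normal, hence $s_i(A) = |\lambda_i(A)|$; (iii) read off $\lambda_i(A) = d + \omega_i$; (iv) take the minimum over $\omega \in \cU_N$ and match with the definition of $\varphi_N$. One could alternatively avoid invoking the general ``normal $\Rightarrow$ singular values are $|$eigenvalues$|$'' fact by noting directly that $A^* A = (\bar d I_N + U_N^*)(d I_N + U_N) = |d|^2 I_N + \bar d\, U_N + d\, U_N^* + I_N$ is again circulant with the same Fourier eigenbasis, whose eigenvalue on $v_\omega$ is $|d|^2 + \bar d \bar\omega + d\omega + 1 = |d + \omega|^2$; then $s_{\min}(A)^2 = \min_\omega |d+\omega|^2$ directly. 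This second route is cleaner and fully self-contained.

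I do not expect any serious obstacle here — this is a routine diagonalization. The only point requiring a moment of care is bookkeeping the exact action of the particular matrix $U_N$ written in \eqref{U_n} on the Fourier vectors (whether the relevant eigenvalue is $\omega$ or $\bar\omega = \omega^{-1}$), but since one ultimately minimizes $|d+\omega|$ over the full set $\cU_N$, which is invariant under $\omega \mapsto \omega^{-1}$, the final answer $\varphi_N(d)$ is unaffected by this choice. (And item (2) of Theorem~\ref{CycleDecomposition} then gives the full scalar-case formula $s_{\min}(A) = \inf_i \varphi_{N_i}(d)$ claimed in Example~\ref{Example-Scalar-Case}.)
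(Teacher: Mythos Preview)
Your proposal is correct and essentially matches the paper's proof: the paper takes exactly your ``second route,'' computing $(D+U_N)(D+U_N)^* = (1+|d|^2)I_N + \bar d\,U_N + d\,U_N^*$, reading off its eigenvalues $\{|d+\omega|^2 : \omega \in \cU_N\}$ from the Fourier basis, and then taking the minimum. Your first route via normality of $dI_N+U_N$ is an equally valid minor variant; and your caveat about the $\omega$ versus $\bar\omega$ bookkeeping (which indeed is irrelevant since $\cU_N$ is closed under conjugation) is well placed --- note that your displayed intermediate expression $|d|^2 + \bar d\,\bar\omega + d\,\omega + 1$ actually equals $|d+\bar\omega|^2$ rather than $|d+\omega|^2$, but as you already observed this does not affect the set of singular values.
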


\begin{proof}
Note that the spectrum of $U_N$ is very explicit. It consists of the set of N-th roots of unity, $\cU_N$.
Since $U_N$ and $U_N^*$ commute, the spectrum of $\overline{d}U_N + d U^{*}_N $ is also easy to compute:
\begin{equation}
\text{Spectrum}(\overline{d}U_N + dU^{*}_N) = \{ \overline{d} \omega + d \overline{\omega}, \omega \in \cU_N \} = \{ 2 \Re(\overline{d}\omega), \omega \in \cU_N \}.
\end{equation}

But
\begin{equation}
(D+U_N)(D+U_N)^{*} = (1+|d|^2) Id_N + \overline{d}U_N + dU^{*}_N
\end{equation}

Thus, the set of singular values of $(D+U_N)$, or equivalently the spectrum of $(D+U_N)(D+U_N)^*$ is given by

\begin{eqnarray}
\text{singvalues}(D+U_{N})&=&\{1+|d|^2+2\text{Re}(\overline{d}\omega), \omega\in\cU_{N}\}\\
&=&\{|d+\omega|^2, \omega\in\cU_N\}.
\end{eqnarray}

Then,
\begin{equation}
s_{\min}^2(D+U_N) = \inf_{\omega\in\cU_{N}}|d+\omega|^2= \text{dist}^2(-d, \cU_{N}),
\end{equation}

so that by definition of $\varphi_N(d)$ in \eqref{varphi},
\begin{equation}
s_{\min}(D+U_N)=\varphi_N(d).
\end{equation}
\end{proof}

Using the last point of Theorem~\ref{CycleDecomposition} and Lemma~\ref{lemma-scalar-case}, it is now easy to complete the computation given in Example~\ref{Example-Scalar-Case}.

\section{Deterministic Bounds}\label{sectionDB}
\subsection{Explicit inversion of the Matrix $A$}

In this section, we give an explicit inversion of the matrix $A$. By Theorem~\ref{CycleDecomposition}, the problem reduces to an explicit inversion of the matrix $D+U_N$. We first introduce the following notations. 

\begin{definition}
Assume that the matrix $D+U_N$ is invertible, i.e. that $(-1)^N \prod_{k=1}^N d_k \neq 1$.
\begin{enumerate}
\item Denote 
\begin{equation}
\sigma_0(k):=(k+1),
\end{equation}
for a single-cycle permutation $\sigma_0=(1 \dots N)$, i.e. $\sigma_0(k)=k+1$ for $1\leq k\leq N-1$ and $\sigma_0(N)=1$.
\item Denote by $B(D)$ the $N \times N$ strictly lower triangular matrix defined by its entries as follows.
For $ 1 \leq j \leq i \leq N$
\begin{equation}
B(D)_{i,j}= \beta_{j+1, i-1} \1_{j \leq i-1}
\end{equation}
\item Define the rank-one matrix 
\begin{equation}
C(D) = \frac{1}{1 - (-1)^N \Pi_{l=1}^N d_l} E(D) F(D)^T
\end{equation}
where $E(D)$ and $F(D)$ are the following  column N-vectors.
For $1 \leq i,j \leq N$,
\begin{equation}
 E(D)_i= \beta_{1,i-1} \qquad \text{and} \qquad F(D)_j = \beta_{j+1,N}
 \end{equation} 
 
\end{enumerate}
\end{definition}

\begin{theorem}\label{Inversion} \

\begin{itemize}
\item [a)]
When the matrix $D+U_N$ is invertible, its inverse is given by 
\begin{equation}
(D+U_N)^{-1}= B(D)+C(D)
\end{equation}

\item [b)] When the matrix $A$ is invertible, its inverse is given by 
\begin{equation}
A^{-1} =  M_{\tau}^{-1} \text{diag}(B(D_1)+C(D_1), \dots, B(D_{K(\sigma)})+C(D_{K(\sigma)}))M_{\tau}
\end{equation}
\end{itemize}
\end{theorem}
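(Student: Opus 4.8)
The plan is to verify part a) by a direct computation, namely checking that $(D+U_N)\bigl(B(D)+C(D)\bigr) = \Id_N$; part b) then follows immediately from Theorem~\ref{CycleDecomposition2} a) (which gives $M_\tau A M_\tau^{-1} = \text{diag}(A_1,\dots,A_{K(\sigma)})$) and the fact that the inverse of a block-diagonal matrix is the block-diagonal matrix of the inverses, together with part a) applied to each block $A_i = D_i + U_{N_i}$. So the entire content is in part a).

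For part a), I would first record the action of the factors. The matrix $U_N$ acts on a column vector $x=(x_1,\dots,x_N)^T$ by $(U_N x)_i = x_{i+1}$ for $i\le N-1$ and $(U_N x)_N = x_1$ (cyclic shift), and $D$ acts by $(Dx)_i = d_i x_i$. I would then compute the $(i,j)$ entry of $(D+U_N)B(D)$ and of $(D+U_N)C(D)$ separately, using the explicit formula $B(D)_{i,j} = \beta_{j+1,i-1}\indi_{j\le i-1}$ and the key recursion that the $\beta$'s satisfy, namely $\beta_{k,m} = -d_m\,\beta_{k,m-1}$ for $m\ge k$, with the convention $\beta_{k,k-1}=1$. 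Multiplying on the left by $D$ multiplies row $i$ by $d_i$, and multiplying on the left by $U_N$ shifts: row $i$ of $U_N B(D)$ is row $i+1$ of $B(D)$ (cyclically). Concretely, for $i<N$ one gets $\bigl((D+U_N)B(D)\bigr)_{i,j} = d_i\,\beta_{j+1,i-1}\indi_{j\le i-1} + \beta_{j+1,i}\indi_{j\le i}$; using $\beta_{j+1,i} = -d_i\beta_{j+1,i-1}$ the two terms cancel for $j\le i-1$, leaving only the $j=i$ term $\beta_{i+1,i} = 1$, i.e. this equals $\delta_{ij}$ — except that the cancellation fails precisely in the last row $i=N$, where the cyclic wraparound ($\text{row }N$ of $U_N B(D)$ is row $1$ of $B(D)$, which is zero since $B(D)$ is strictly lower triangular) produces a defect. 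That leftover defect in row $N$ is exactly what the rank-one correction $C(D) = \frac{1}{1-(-1)^N\Pi d_l}E(D)F(D)^T$ is designed to kill: one computes $(D+U_N)C(D)$ — again $D$ scales rows, $U_N$ shifts rows cyclically, and the vector $E(D)$ with $E(D)_i=\beta_{1,i-1}$ satisfies the same recursion $E(D)_{i+1} = -d_i E(D)_i$ so that $(D+U_N)E(D)$ collapses to a single nonzero entry in the last coordinate, equal to $d_N E(D)_N + E(D)_1 = \beta_{1,N} + \beta_{1,0} = -(-1)^N\Pi_{l=1}^N d_l \cdot(\pm1) + 1$, which up to the normalizing scalar reproduces $F(D)^T$ in row $N$ — and one checks the bookkeeping so that $(D+U_N)\bigl(B(D)+C(D)\bigr)$ has $\delta_{ij}$ in every row including row $N$.

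The main obstacle I anticipate is purely bookkeeping: getting the index ranges and the sign conventions in the definition of $\beta_{k,m}$ (in particular the boundary cases $\beta_{k,k-1}=1$ and $\beta_{1,0}$, and whether the wraparound term lands with the right sign relative to the determinant $(-1)^N - \Pi d_l$) exactly right, so that the rank-one correction cancels the last-row defect on the nose rather than up to a sign. A clean way to organize this, which I would adopt, is to avoid the asymmetry between "row $N$" and the other rows altogether: instead of multiplying out, verify the identity columnwise — show that for each standard basis vector $e_j$, the vector $v^{(j)} := \bigl(B(D)+C(D)\bigr)e_j$ (whose entries are explicit in terms of the $\beta$'s) satisfies $(D+U_N)v^{(j)} = e_j$, i.e. solve the linear recursion $d_i v^{(j)}_i + v^{(j)}_{i+1} = \delta_{ij}$ (indices mod $N$) directly and observe that the cyclic boundary condition $v^{(j)}_1 = \delta_{Nj} - d_N v^{(j)}_N$ is precisely what fixes the free constant, producing the rank-one term with its stated normalization $1/(1-(-1)^N\Pi d_l)$ (which is where invertibility $c_0(D)>0$ is used). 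This reduces everything to a one-dimensional telescoping computation and makes the role of each ingredient transparent.
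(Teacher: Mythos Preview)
Your proposal is correct and takes essentially the same approach as the paper. The paper's proof of part a) solves the linear system $(D+U_N)x=y$ directly: writing the equations $d_k x_k + x_{(k+1)} = y_k$, it forward-substitutes to express $x_2,\dots,x_N$ in terms of $x_1$ and the $y_j$'s, then uses the wraparound equation to solve for $x_1$, and observes the result is $x=(B(D)+C(D))y$ --- this is exactly your preferred ``columnwise'' approach, carried out for a general right-hand side $y$ rather than for each $e_j$ separately. Part b) is handled identically in both.
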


\begin{proof}[Proof of Theorem~\ref{Inversion}]

As a first step, we give an explicit inversion for the matrix $D+U_N$, assuming that it is invertible, i.e that $ \prod_{k=1}^N d_k \neq (-1)^N $.

Given a vector $y\in\C^N$, we want to find the vector $x\in\C^N$ such that 
\begin{equation}\label{explicit_inversion_1}
(D+U_N)x=y,
\end{equation}

i.e. such that, for $1\leq k\leq N$
\begin{equation}
d_kx_k+x_{(k+1)}=y_k
\end{equation}

We will denote $d'_k=-d_k$ for ease of notations. \eqref{explicit_inversion_1} is equivalent to the $N$ equations
\begin{eqnarray}\label{explicit_inversion2}
x_2&=&d'_1x_1+y_1\nonumber\\
x_3&=&d'_2d'_1x_1+d'_2y_1+y_2\nonumber\\
\vdots \nonumber\\
x_L&=&d'_{L-1}\dots d'_1x_1+d'_{L-1}d'_{L-2}\dots d'_2y_1+\dots +d'_{L-1}y_{L-2}+y_{L-1}\nonumber\\
\vdots\nonumber\\
x_N&=&d'_{N-1}\dots d'_1x_1+d'_{N-1}\dots d'_2y_1+\dots+d'_{N-1}y_{N-2}+y_{N-1}\nonumber\\
x_1&=&d'_N\dots d'_1x_1+d'_N\dots d'_2y_1+\dots +d'_Ny_{N-1}+y_N
\end{eqnarray}

Recall that, for $1\leq k\leq m\leq N$,
\begin{equation}
\beta_{k,m}=\prod_{\ell=k}^md'_\ell
\end{equation}

and $\beta_{k,m}=1$ if $2\leq k\leq N+1$ and $m=k-1$. 
Then the last equation of \eqref{explicit_inversion2} can be solved for $x_1$:
\begin{equation}\label{explicit_inversion3}
x_1=\frac 1{1-\prod_{i=1}^Nd'_i}\sum_{k=1}^N\beta_{k+1,N}y_k
\end{equation}

From \eqref{explicit_inversion3}, we easily get the other components of the vector $x$. For $2\leq L\leq N$,
\begin{equation}\label{explicit_inversion4}
x_L=\beta_{1,L-1}x_1+\sum_{j=1}^{L-1}\beta_{j+1, L-1}y_j
\end{equation}

A simple inspection shows that the last two formulae \eqref{explicit_inversion3} and \eqref{explicit_inversion4} are equivalent to the fact that
\begin{equation}
x = (C(D)+ B(D)) y,
\end{equation}
 or equivalently, this proves the first point a) of Theorem~\ref{Inversion}, i.e. that
\begin{equation}
(D+U_N)^{-1} = C(D)+ B(D)
\end{equation}

Now, using the block-decomposition given in a) of Theorem~\ref{CycleDecomposition2}, one sees that the second point b) of Theorem~\ref{Inversion} is also proved.
\end{proof}

As an immediate corollary, we can compute the smallest singular value $s_{min}(D+U_N)$ by the largest singular value of the matrix $B(D)+C(D)$, or equivalently by its operator norm $ || B(D) + C(D)||$.
\begin{corollary}\label{OperatorNorm} \
\begin{itemize}
\item [a)] The least singular value of the matrix $D+U_N$ is given by
\begin{equation}
s_{min}(D+U_N) = \frac{1}{ || B(D) + C(D)||}
\end{equation}

\item [b)] The least singular value of the matrix $A$ is given by
\begin{equation}
s_{min}(A) =\min_{1\leq i\leq K(\sigma)}  \frac{1}{ || B(D_i) + C(D_i)||}
\end{equation}
\end{itemize}

\end{corollary}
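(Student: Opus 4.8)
The plan is to reduce the statement to the elementary linear-algebra fact that for any invertible matrix $T$ one has $s_{min}(T) = \|T^{-1}\|^{-1}$, where $\|\cdot\|$ denotes the operator (spectral) norm. Indeed, the singular values of $T^{-1}$ are exactly the reciprocals of the singular values of $T$, so the largest singular value of $T^{-1}$, which is $\|T^{-1}\|$, equals $1/s_{min}(T)$. With this in hand, part a) is immediate: I apply it to $T = D+U_N$, invertible by hypothesis, and substitute the explicit expression $(D+U_N)^{-1} = B(D)+C(D)$ furnished by Theorem~\ref{Inversion}~a), obtaining $s_{min}(D+U_N) = \|B(D)+C(D)\|^{-1}$.

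For part b), I would combine two ingredients. First, by Theorem~\ref{CycleDecomposition2}~a), $A$ is unitarily conjugate to $\text{diag}(A_1,\dots,A_{K(\sigma)})$ with $A_i = D_i + U_{N_i}$; since singular values are invariant under unitary conjugation and the set of singular values of a block-diagonal matrix is the union of those of its blocks, we get $s_{min}(A) = \min_{1\leq i\leq K(\sigma)} s_{min}(A_i)$, which is exactly item (2) of Theorem~\ref{CycleDecomposition}. Second, applying part a) to each block $A_i$, which is invertible under the standing assumption $c_0(D_i)>0$ (equivalently $\prod_{\ell\in C_i} d_\ell \neq (-1)^{N_i}$), yields $s_{min}(A_i) = \|B(D_i)+C(D_i)\|^{-1}$, and taking the minimum over $i$ finishes the proof. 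Alternatively, one can read the conclusion directly off the formula for $A^{-1}$ in Theorem~\ref{Inversion}~b): since $M_\tau$ is unitary, $\|A^{-1}\| = \|\text{diag}(B(D_1)+C(D_1),\dots,B(D_{K(\sigma)})+C(D_{K(\sigma)}))\| = \max_{1\leq i\leq K(\sigma)} \|B(D_i)+C(D_i)\|$, and then invert.

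There is essentially no serious obstacle here; the only point deserving a word of care is the justification that $\|T^{-1}\|^{-1} = s_{min}(T)$ together with the invariance of singular values under unitary conjugation and their behavior under block decomposition, all of which are standard. The real content of the corollary lies entirely in the explicit inversion formula already established in Theorem~\ref{Inversion}, so the proof should be just a few lines.
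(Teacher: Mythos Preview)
Your proposal is correct and follows essentially the same approach as the paper: use $s_{\min}(T)=\|T^{-1}\|^{-1}$ together with the explicit inverse from Theorem~\ref{Inversion} for part a), and then invoke Theorem~\ref{CycleDecomposition} (or equivalently the block form of $A^{-1}$ in Theorem~\ref{Inversion}~b)) for part b). The paper's own proof is just a terser version of exactly this argument.
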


\begin{proof}[Proof of Corollary~\ref{OperatorNorm}]

Obviously 
\begin{equation}
s_{min}^{-1}(D+U_N) = s_{max}(D+U_N)^{-1} = || (D+U_N)^{-1}|| = || C(D) + B(D) ||
\end{equation}
Which proves the first point of Corollary~\ref{OperatorNorm}.
The second point b) of Corollary~\ref{OperatorNorm} is then a direct consequence again of the second point of Theorem~\ref{CycleDecomposition}.

\end{proof}

This explicit expression is not easily computed in general. In fact computing an operator norm is usually as hard as computing a smallest singular value. But, the proof of Theorem~\ref{OneSided} will indeed rely on these estimates in terms of operator norms. Moreover, weakening these estimates using the Hilbert-Schmidt norms, we will prove, in the next section, the lower bounds given in Theorem~\ref{ExplicitLowerBounds-1Cycle}.

\subsection{ Lower Bounds on the least singular value}\label{LowerBounds}

In this section we prove deterministic lower bounds on $s_{min}(D+U_N)$ and $s_{min}(A)$. 

\begin{theorem}\label{LBrho}\
\begin{enumerate}

\item Assuming that $D+U_N$ is invertible, i.e. that $c_0(D) > 0$, 
\begin{equation}
 s^2_{min}(D+U_N) \geq \frac{1}{\rho_N(D)}
\end{equation}

\item Assuming that $A$ is invertible, i.e. that $c_0(D_i) > 0$, for $1\leq i\leq K(\sigma)$, 
\begin{equation}
 s^2_{min}(A) \geq \min_{1\leq i \leq K(\sigma)} \frac{1}{\rho_N(D_i)}
 \end{equation}

\end{enumerate}
\end{theorem}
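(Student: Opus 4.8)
The starting point is Corollary~\ref{OperatorNorm}, which tells us that $s_{min}^{-2}(D+U_N) = \|B(D)+C(D)\|^2$. Since the operator norm is bounded above by the Hilbert--Schmidt (Frobenius) norm, it suffices to show that $\|B(D)+C(D)\|_{HS}^2 \leq \rho_N(D)$. Using $\|X+Y\|_{HS}^2 \leq 2\|X\|_{HS}^2 + 2\|Y\|_{HS}^2$, and recalling $\rho_N(D) = 2(\rho_N^{(1)}(D)+\rho_N^{(2)}(D))$, the plan is to show that $\|C(D)\|_{HS}^2 = \rho_N^{(1)}(D)$ and $\|B(D)\|_{HS}^2 = \rho_N^{(2)}(D)$. (The second should really be $\leq$, or an identity after reindexing.)

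The computation for $C(D)$ is immediate: $C(D) = \frac{1}{1-(-1)^N\Pi d_l} E(D)F(D)^T$ is rank one, so $\|C(D)\|_{HS}^2 = \frac{1}{|1-(-1)^N\Pi d_l|^2}\|E(D)\|^2\|F(D)\|^2 = \frac{1}{c_0(D)}\big(\sum_{i=1}^N |\beta_{1,i-1}|^2\big)\big(\sum_{j=1}^N |\beta_{j+1,N}|^2\big)$, which is exactly $\rho_N^{(1)}(D)$ after matching indices ($k \leftrightarrow i$ in the first sum, $m \leftrightarrow j$ in the second). For $B(D)$, which is strictly lower triangular with $B(D)_{i,j} = \beta_{j+1,i-1}\mathbf{1}_{j\leq i-1}$, we have $\|B(D)\|_{HS}^2 = \sum_{1\leq j < i \leq N} |\beta_{j+1,i-1}|^2$; relabeling $j \mapsto k$, $i \mapsto m$ gives $\sum_{1\leq k \leq m-1 \leq N}|\beta_{k+1,m-1}|^2$, i.e. exactly $\rho_N^{(2)}(D)$. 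Assembling these two identities with the $\|\cdot\| \leq \|\cdot\|_{HS}$ bound and the parallelogram-type inequality yields item (1). Item (2) then follows immediately from part b) of Corollary~\ref{OperatorNorm} (or equivalently part (2) of Theorem~\ref{CycleDecomposition}), since $s_{min}(A) = \min_i s_{min}(A_i)$ and each $A_i = D_i + U_{N_i}$.

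There is essentially no hard step here — everything is a bookkeeping exercise in matching the indices in Definition~\ref{definitionrho} to the explicit entries of $B(D)$ and $C(D)$ from Theorem~\ref{Inversion}. The only point requiring mild care is keeping the conventions for the empty-product values $\beta_{k,k-1}=1$ straight (these arise as the diagonal-adjacent entries of $B(D)$ and as the endpoint terms $\beta_{1,0}$, $\beta_{N+1,N}$ in $E(D)$, $F(D)$), and making sure the factor of $2$ in $\rho_N(D)$ is honestly accounted for by the inequality $\|B+C\|^2 \leq 2\|B\|^2 + 2\|C\|^2$ rather than lost. If one wanted a slightly sharper constant one could instead expand $\|B+C\|_{HS}^2 = \|B\|_{HS}^2 + \|C\|_{HS}^2 + 2\Re\,\mathrm{tr}(B^*C)$ and bound the cross term, but for the stated bound the crude split suffices.
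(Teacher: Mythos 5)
Your proposal is correct and follows essentially the same route as the paper: reduce via Corollary~\ref{OperatorNorm} to bounding $\|B(D)+C(D)\|^2$, split with $\|X+Y\|^2\leq 2\|X\|^2+2\|Y\|^2$, and identify $\|C(D)\|^2=\|C(D)\|_{HS}^2=\rho_N^{(1)}(D)$ (rank one) and $\|B(D)\|_{HS}^2=\rho_N^{(2)}(D)$. The only cosmetic difference is that the paper keeps the operator norm of $C(D)$ and passes to the Hilbert--Schmidt norm only for $B(D)$, whereas you pass to the Hilbert--Schmidt norm for the whole sum up front; the two are equivalent here since $C(D)$ is rank one.
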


\begin{proof}
Again we prove only the first part of the theorem, since the second follows immediately from the first part and from Theorem~\ref{CycleDecomposition}.
The result is a direct consequence of the Corollary~\ref{OperatorNorm}, and of the trivial bounds:
\begin{equation}
|| C(D)+B(D)||^2 \leq 2||C(D)||^2 + 2||B(D)||^2 \leq 2||C(D)||^2 + 2|| B(D)||_{HS}^2
\end{equation}
Indeed, since $C(D)$ is of rank one, its operator norm is equal to its Hilbert-Schmidt norm, and is given by
\begin{equation}
||C(D)||^2 = \rho^{(1)}_N(D)
\end{equation}
and the Hilbert-Schmidt norm of $B(D)$ is given by
\begin{equation}
|| B(D)||_{HS}^2= \rho^{(2)}_N.
\end{equation}
This shows that

\begin{equation}
s_{min}^2(D+U_N) = || C(D)+B(D)||^{-2} \geq \frac{1}{\rho_N(D)}
\end{equation}
which proves the first part of the theorem.
\end{proof}

\begin{remark}
Note that, obviously 
\begin{equation}
\gamma_N(D) \leq  \rho^{(2)}_N(D) \leq N \gamma_N(D).
\end{equation}

\end{remark}

\subsection{ Upper Bounds on the least singular value}\label{UpperBounds}

In this section we prove deterministic upper bounds on $s_{min}(D+U_N)$ and $s_{min}(A)$, given in Theorem~\ref{ExplicitLowerBounds-1Cycle}. Also, weaker upper bounds are given in Lemma~\ref{Lemma-Comparison-to-scalar} below, where the least singular values $s_{min}(D+U_N)$ and $s_{min}(A)$ are compared to the scalar case.
We will need the following notations.

\begin{definition}\label{delta}\
\begin{enumerate}
\item  Denote by $u(D)$ an $N$-th root of $ \Pi_{\ell=1}^{N} d_\ell$, i.e.
\begin{equation}
u(D)^N = \Pi_{\ell=1}^{N} d_\ell
\end{equation}

\item  Define $\delta_1 :=1$, and for $2 \leq k \leq N$, let
\begin{equation}
\delta_k(D) := \frac{\Pi_{\ell=1}^{k-1} d_\ell}{u(D)^{k-1}} = \frac{(-1)^{k-1} \beta_{1,k-1}(D)}{u(D)^{k-1}}
\end{equation}

\end{enumerate}
\end{definition}

Our upper bounds are obtained as a direct consequence of the following variational definition of $s_{min}(D+U_N)$.

\begin{theorem}\label{Variational} \
With the notations above, 
\begin{equation}
s_{\min}^2(D+U_N)= \min_{z \in \C^N} \frac{\sum_{k=1}^N|\delta_{(k+1)}|^2|u(D)z_k+z_{(k+1)}|^2}{\sum_k|\delta_k|^2|z_k|^2}
\end{equation}
\end{theorem}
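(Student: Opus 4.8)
\textbf{Proof plan for Theorem~\ref{Variational}.}

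The plan is to exploit the explicit factorization that underlies the two previous results. The operator $U_N$ is a unitary cyclic shift, and in the scalar case ($D = dI_N$) the quantity $|u(D)z_k + z_{k+1}|$ already appeared implicitly through the eigenvalue computation $s_{\min}^2 = \inf_{\omega\in\cU_N}|d+\omega|^2$. The key observation is that the non-scalar matrix $D+U_N$ can be conjugated, by an explicit \emph{diagonal} (non-unitary, but invertible) matrix $\Delta = \operatorname{diag}(\delta_1(D),\dots,\delta_N(D))$, into $u(D)\,\Delta\cdot\operatorname{something} + U_N$ acting like the scalar shift with parameter $u(D)$. Concretely, writing $d'_k = -d_k$ as in the proof of Theorem~\ref{Inversion}, one checks that $\delta_{k+1}/\delta_k = d_k/u(D)$ for $1\le k\le N-1$, while the wrap-around ratio $\delta_1/\delta_N$ produces exactly the factor $d_N/u(D)$ up to the sign $(-1)^N$ hidden in $u(D)^N = \prod d_\ell$; since $\delta_1 = 1$ this is where the cyclic structure closes up consistently. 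Thus conjugation by $\Delta$ turns the $k$-th row relation $d_k x_k + x_{k+1} = y_k$ into a relation of the form $u(D)\tilde x_k + \tilde x_{k+1} = \tilde y_k$ after the substitution $x_k = \delta_k \tilde x_k$, $y_k = \delta_k \tilde y_k$.

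First I would set up this change of variables carefully and verify the ratio identities for $\delta_k$, including the delicate wrap-around index $N\to 1$. Next, I would write the Rayleigh-quotient characterization of the smallest singular value,
\begin{equation}
s_{\min}^2(D+U_N) = \min_{x\neq 0} \frac{\|(D+U_N)x\|^2}{\|x\|^2},
\end{equation}
and substitute $x_k = \delta_k z_k$. Then $\|x\|^2 = \sum_k |\delta_k|^2 |z_k|^2$, which is the claimed denominator. For the numerator, the $k$-th component of $(D+U_N)x$ is $d_k \delta_k z_k + \delta_{k+1} z_{k+1}$ (indices mod $N$), and using $d_k\delta_k = u(D)\delta_{k+1}$ (valid for all $k$ by the wrap-around check) this equals $\delta_{k+1}(u(D) z_k + z_{k+1})$. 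Hence $\|(D+U_N)x\|^2 = \sum_k |\delta_{k+1}|^2 |u(D)z_k + z_{k+1}|^2$, which is exactly the claimed numerator. Since $z\mapsto x$ is a bijection of $\C^N\setminus\{0\}$ onto itself (the $\delta_k$ are nonzero, $u(D)\neq 0$ because $D+U_N$ invertible forces $\prod d_\ell \neq 0$ — actually only $\prod d_\ell \neq (-1)^N$, so one should note $u(D)$ can be taken nonzero unless all $d_\ell$ vanish, in which case $D+U_N = U_N$ is unitary and the formula degenerates gracefully), the minimum over $z$ equals the minimum over $x$, giving the identity.

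The main obstacle I anticipate is purely bookkeeping: getting the wrap-around relation $d_N\delta_N = u(D)\delta_1 = u(D)$ to hold with the correct sign. One has $d_N\delta_N = d_N \cdot \Pi_{\ell=1}^{N-1} d_\ell / u(D)^{N-1} = (\Pi_{\ell=1}^{N} d_\ell)/u(D)^{N-1} = u(D)^N/u(D)^{N-1} = u(D)$, so in fact it works on the nose with \emph{no} sign issue, because the definition of $\delta_k$ used $\Pi d_\ell$ rather than $\beta_{1,k-1}$ directly (the $(-1)^{k-1}$ in the alternative expression cancels the sign in $\beta$). So the only real care needed is to handle indices modulo $N$ consistently and to confirm the substitution is a genuine bijection. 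No deep input is required beyond Theorem~\ref{Inversion}'s setup and the standard variational formula for $s_{\min}$.
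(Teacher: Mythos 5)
Your proposal is correct and takes essentially the same route as the paper: start from the Rayleigh-quotient characterization of $s_{\min}^2$, substitute $x_k=\delta_k z_k$, and use the ratio identity $d_k\delta_k = u(D)\delta_{k+1}$ (with the wrap-around $d_N\delta_N=u(D)\delta_1$) to rewrite the numerator, while the denominator becomes $\sum_k|\delta_k|^2|z_k|^2$ directly. (One small parenthetical slip: $u(D)=0$ as soon as \emph{some} $d_\ell=0$, not only when all vanish; the substitution requires all $d_\ell\neq 0$, which is implicitly guaranteed by the standing assumption that the support of $\mu$ is bounded away from zero.)
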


\begin{proof}
The usual characterization of $s_{min}(D+U_N)$ is
\begin{equation}
s_{\min}^2(D+U_N) = \min_{x\in\C^N}\frac{\sum_{k=1}^N|d_kx_k+x_{(k+1)}|^2}{\sum_{k=1}^N|x_k|^2}
\end{equation}

Let $x\in\C^N$ and define $z\in\C^N$ by 
\begin{equation}
x_k=\delta_kz_k,
\end{equation}
which is possible since the $\delta_k$'s do not vanish. 
Then,
\begin{eqnarray}
d_kx_k+x_{(k+1)}&=&d_k\delta_kz_k+\delta_{(k+1)}z_{(k+1)}\\
&=&\delta_{(k+1)}(uz_k+z_{(k+1)}),
\end{eqnarray}

so that 
\begin{equation}
\sum_{k=1}^N|d_kx_k+x_{(k+1)}|^2=\sum_{k=1}^N|\delta_{(k+1)}|^2|uz_k+z_{(k+1)}|^2
\end{equation}

and
\begin{equation}
\sum_{k=1}^N|x_k|^2=\sum_{k=1}^N|\delta_k|^2|z_k|^2
\end{equation}

This shows that
\begin{equation}
s_{\min}^2(D+U_N)=\min_x\frac{\sum_{k=1}^N|d_kx_k+x_{(k+1)}|^2}{\sum_{k=1}^N|x_k|^2}
=\min_z\frac{\sum_{k=1}^N|\delta_{(k+1)}|^2|uz_k+z_{(k+1)}|^2}{\sum_k|\delta_k|^2|z_k|^2}
\end{equation}

\end{proof}

This variational characterization gives first the following weak bound, which compares the general case to the scalar case.

\begin{lemma}\label{Lemma-Comparison-to-scalar}\
\begin{enumerate}
\item
\begin{equation}
s_{\min}(D+U_N)\leq \varphi_N(u(D))
\end{equation}
\item
\begin{equation}
s_{\min}(A)\leq \min_{1 \leq i \leq K(\sigma)}   \varphi_{N_i}(u(D_i))
\end{equation}
\end{enumerate}
\end{lemma}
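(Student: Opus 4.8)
The plan is to derive Lemma~\ref{Lemma-Comparison-to-scalar} directly from the variational formula in Theorem~\ref{Variational} by plugging in a well-chosen test vector $z$. The right-hand side $\varphi_N(u(D))$ is by definition $\min(|u(D)+\omega| : \omega \in \cU_N)$, so I would fix an $N$-th root of unity $\omega_0 \in \cU_N$ achieving this minimum, and then look for a test vector $z$ that makes the scalar recurrence $u(D)z_k + z_{(k+1)}$ behave like multiplication by $(u(D)+\omega_0)$ in an averaged sense. The natural candidate, motivated by the fact that $U_N$ has eigenvector $(\omega_0^{-1}, \omega_0^{-2}, \dots, \omega_0^{-N})$ (or $(1,\omega_0,\dots,\omega_0^{N-1})$ up to relabeling) with eigenvalue $\omega_0$, is to take $z_k = \omega_0^{-k}$ (equivalently $z_k = \bar\omega_0^{\,k}$ since $|\omega_0|=1$), so that $z_{(k+1)} = \omega_0^{-(k+1)}$ for $1 \le k \le N-1$ and $z_{(N+1)} = z_1 = \omega_0^{-1} = \omega_0^{-(N+1)}$ because $\omega_0^N = 1$. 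With this choice, $u(D)z_k + z_{(k+1)} = \omega_0^{-(k+1)}(u(D)\omega_0 + 1)$ for every $k$, hence $|u(D)z_k + z_{(k+1)}|^2 = |u(D)\omega_0 + 1|^2 = |u(D) + \omega_0^{-1}|^2$, and since $\omega_0^{-1}$ also ranges over $\cU_N$ we may as well have picked $\omega_0$ so that this equals $\varphi_N(u(D))^2$.

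The computation then collapses: the numerator in Theorem~\ref{Variational} becomes $\sum_{k=1}^N |\delta_{(k+1)}|^2 \varphi_N(u(D))^2$ and the denominator becomes $\sum_{k=1}^N |\delta_k|^2 |z_k|^2 = \sum_{k=1}^N |\delta_k|^2$ (using $|z_k| = 1$). The only point requiring a moment's care is that $\sum_{k=1}^N |\delta_{(k+1)}|^2 = \sum_{k=1}^N |\delta_k|^2$: here the index $k+1$ is taken modulo $N$ via $\sigma_0$, so as $k$ runs over $\{1,\dots,N\}$, $(k+1)$ also runs over $\{1,\dots,N\}$ (a cyclic shift), and the sum is unchanged. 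Therefore the ratio equals exactly $\varphi_N(u(D))^2$, and since $s_{\min}^2(D+U_N)$ is the minimum over all $z$, we get $s_{\min}^2(D+U_N) \le \varphi_N(u(D))^2$, i.e. the first inequality. The second inequality is then immediate: by Theorem~\ref{CycleDecomposition}(2), $s_{\min}(A) = \min_{1 \le i \le K(\sigma)} s_{\min}(A_i) = \min_i s_{\min}(D_i + U_{N_i})$, and applying the first part to each block gives $s_{\min}(A) \le \min_{1 \le i \le K(\sigma)} \varphi_{N_i}(u(D_i))$.

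I do not anticipate a genuine obstacle here; this is a short verification. The one place to be vigilant is the cyclic wrap-around: one must confirm that the test vector $z_k = \omega_0^{-k}$ really does satisfy the "last" equation $u(D)z_N + z_1 = \omega_0^{-(N+1)}(u(D)\omega_0+1)$ with the same constant as all the others, which is exactly where $\omega_0^N = 1$ is used — this is the whole point of choosing an $N$-th root of unity rather than an arbitrary unimodular number. A second minor bookkeeping point is to make sure the definition of $\varphi_N$ in \eqref{varphi} is being matched with the correct sign/conjugate convention ($\varphi_n(z) = \min(|\omega+z| : \omega \in \cU_n)$, and $\cU_n$ is closed under $\omega \mapsto \bar\omega = \omega^{-1}$), so that $|u(D)+\omega_0^{-1}|$ can indeed be taken equal to $\varphi_N(u(D))$. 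Once these are checked, the lemma follows.
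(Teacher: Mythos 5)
Your proof is correct and takes essentially the same approach as the paper: both plug the eigenvector $z_k=\omega^k$ (or equivalently $\omega^{-k}$) of $U_N$ into the variational formula of Theorem~\ref{Variational} and minimize over $\omega\in\cU_N$. You are slightly sharper in observing that $\sum_k|\delta_{(k+1)}|^2=\sum_k|\delta_k|^2$ by cyclic relabeling, so the Rayleigh quotient is exactly $|u(D)+\omega|^2$, whereas the paper only records the inequality it needs.
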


\begin{proof}
We bound $s_{\min}(D+U_N)$ using an eigenvector of $(u(D)Id_N+U_N)(u(D)Id_N+U_N)^\ast$ as a test vector $z\in\C^N$ in Theorem~\ref{Variational}, i.e. we use  $z^\omega$ defined by

\begin{equation}
z_k^\omega=\omega^k \qquad \omega\in\cU_N.
\end{equation}

By  Theorem~\ref{Variational}, we see that

\begin{eqnarray}
s_{\min}^2(D+U_N)&\leq&\frac{\sum_{k=1}^N|\delta_{(k+1)}|^2|uz_k^\omega+z_{(k+1)}^\omega|^2}{\sum_k|\delta_k|^2|z_k^\omega|^2}\\
&\leq&\frac{\sum_k|\delta_{(k+1)}|^2|u+\omega|^2}{\sum_k|\delta_k|^2}\\
&\leq&|u+\omega|^2
\end{eqnarray}

Minimizing over $\omega\in\cU_N$ we get
\begin{equation}
s_{\min}^2(D+U)\leq \min_{\omega} |u+\omega|^2=s_{\min}^2(uId_N+U_N),
\end{equation}
so that
\begin{equation}
s_{\min}(D+U_N)\leq \varphi_N(u),
\end{equation}
which is the announced upper bound in the first part of the Lemma.
The proof of the second part is then a direct consequence of Theorem~\ref{CycleDecomposition}
\end{proof}

It is possible to get a much sharper bound using a better choice of test vector in Theorem~\ref{Variational}.

\begin{theorem}\label{UBgamma}\
\begin{enumerate}
\item
\begin{equation}
s^2_{min}(D+U_N) \leq \frac{c_0(D)}{\gamma_N(D)}
\end{equation}
\item 
\begin{equation}
 s^2_{min}(A) \leq  \min_{1\leq i\leq K(\sigma)}\frac{c_0(D_i)}{\gamma_N(D_i)}
\end{equation}
\end{enumerate}
\end{theorem}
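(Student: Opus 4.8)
The plan is to exhibit a single, explicit test vector $z \in \C^N$ in the variational formula of Theorem~\ref{Variational} that produces the bound $c_0(D)/\gamma_N(D)$. Recall that Theorem~\ref{Variational} gives
\begin{equation}
s_{\min}^2(D+U_N) = \min_{z\in\C^N} \frac{\sum_{k=1}^N |\delta_{k+1}|^2 |u z_k + z_{k+1}|^2}{\sum_k |\delta_k|^2 |z_k|^2},
\end{equation}
so the numerator is controlled as soon as we choose $z$ so that the ``discrete first-order difference'' $u z_k + z_{k+1}$ vanishes for all $k$ except possibly one index, at which it is forced to be nonzero by the cyclic constraint (the operator $u\,\Id_N + U_N$ being invertible when $c_0(D)>0$). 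The natural choice is to fix an index $k_0 \in \{1,\dots,N\}$ realizing the maximum in the definition of $\gamma_N(D) = \max_{1\le k\le N}\sum_{m=1}^N |\beta_{k,m}|^2$, and set $z$ to be (a scalar multiple of) the solution of the one-cycle recursion $u z_k + z_{k+1} = 0$ broken at $k_0$.

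Concretely, I would take $z$ supported by the recursion $z_{k+1} = -u\, z_k$ starting from $z_{k_0+1}$ and wrapping around; since $u^N = \Pi_\ell d_\ell$, going once around the cycle multiplies by $(-u)^N = (-1)^N \Pi_\ell d_\ell$, so the recursion cannot be closed consistently unless $(-1)^N\Pi_\ell d_\ell = 1$, i.e.\ unless $c_0(D) = 0$. Thus with $c_0(D) > 0$ exactly one of the $N$ difference terms $u z_k + z_{k+1}$ is nonzero — say the one at $k = k_0$ — and its modulus, after normalizing, is controlled by $|(-1)^N - \Pi d_\ell| = |\det(D+U_N)| = c_0(D)^{1/2}$, while the weighted denominator $\sum_k |\delta_k|^2 |z_k|^2$ reduces (after cancellation of the $\delta$'s against the geometric factors $(-u)^k$, using $\delta_k = (-1)^{k-1}\beta_{1,k-1}/u^{k-1}$) to a sum of the form $\sum_{m} |\beta_{k_0, m}|^2$, which is exactly $\gamma_N(D)$ by the choice of $k_0$. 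Plugging these two computations into the variational formula yields
\begin{equation}
s_{\min}^2(D+U_N) \leq \frac{c_0(D) \cdot (\text{const from the single nonzero term})}{\gamma_N(D)},
\end{equation}
and the bookkeeping of the constant (it should be exactly $1$) gives the claimed inequality. The second part follows immediately from Theorem~\ref{CycleDecomposition}(2), taking the minimum over cycles.

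The main obstacle I anticipate is purely computational rather than conceptual: carefully tracking the change of variables $x_k = \delta_k z_k$ together with the geometric recursion $z_{k+1} = -u z_k$ so that the numerator collapses to a single term equal to $c_0(D)$ (not $c_0(D)$ times a stray power of $|u|$ or $|\delta_{k_0+1}|$), and checking that the surviving denominator sum is genuinely $\sum_{m=1}^N |\beta_{k_0,m}|^2$ with the right index conventions (including the boundary convention $\beta_{k,k-1}=1$). One has to be slightly careful about where the ``break'' in the cyclic recursion lands relative to the index shift $k \mapsto k+1$ appearing in the numerator weights $|\delta_{k+1}|^2$; choosing to break at $k_0$ so that $z_k \propto (-u)^{k-k_0-1}$ for $k$ running cyclically from $k_0+1$ back to $k_0$ should make the weights $|\delta_{k+1}|^2 |u z_k + z_{k+1}|^2$ vanish for $k\ne k_0$ and isolate the determinant factor at $k=k_0$. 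Once the identity $|\delta_{k_0+1}(uz_{k_0}+z_{k_0+1})| = |(-1)^N - \Pi_\ell d_\ell|\cdot(\text{normalization})$ is verified, the rest is immediate.
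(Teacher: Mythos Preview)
Your proposal is correct and is essentially the paper's own proof: the paper also plugs into Theorem~\ref{Variational} the geometric test vector $z_k \propto (-u)^{k}$ broken at a single index $k_0$, so that all but one of the terms $uz_k+z_{k+1}$ vanish and the surviving one produces the factor $|1-(-1)^N\prod_\ell d_\ell|^2=c_0(D)$; the paper then simplifies the weighted denominator (dropping the positive ``wrap-around'' terms, which only weakens the inequality in the right direction) to obtain $\sum_{m\ge k_0-1}|\beta_{k_0,m}|^2$, and finally optimizes over $k_0$ to reach $\gamma_N(D)$. The only cosmetic difference is that the paper places the break at $k=k_0-1$ and optimizes over $k_0$ at the end rather than fixing the maximizing $k_0$ from the start; the index bookkeeping you flag as the main obstacle is exactly what the paper works through.
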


\begin{proof}
We prove the first part of the theorem. The second part is again a direct consequence of Theorem~\ref{CycleDecomposition}.
We make a better choice of test vector in Theorem~\ref{Variational}. 
More precisely, we fix $k_0\leq N$ and choose $z$ as follows:

\begin{eqnarray}
z_1&=&(-u)^{N-k_0+1}\\ z_2&=&(-u)^{N-k_0+2}\\
z_{k_0-1}&=&(-u)^{N-1} \\ z_{k_0}&=&1\\
z_{k_0+1}&=&-u\\ z_N&=&(-u)^{N-k_0}
\end{eqnarray}

We then have
\begin{equation}
uz_k+z_{(k+1)}=0
\end{equation}

for every $1\leq k\leq N$, but for $k=k_0-1$ we have

\begin{equation}
uz_k+z_{(k+1)}=(-1)^{N-1}u^N+1.
\end{equation}
By the definition of $\delta_k$ in Definition~\ref{delta}, we then have
\begin{eqnarray}
\sum_{k=1}^N|\delta_{(k+1)}|^2|uz_k+z_{(k+1)}|^2=c_0|\delta_{k_0}|^2= c_0\frac{\prod_{\ell=1}^{k_0-1}|d_\ell|^2}{|u|^{2(k_0-1)}}
\end{eqnarray}

Moreover,
\begin{equation}
\sum_{k=1}^N|\delta_k|^2|z_k|^2=\sum_{k=1}^{k_0-1}|\delta_k|^2|z_k|^2+\sum_{k=k_0}^N|\delta_k|^2|z_k|^2.
\end{equation}

The second sum can be written as
\begin{eqnarray}
\sum_{k=k_0}^N|\delta_k|^2|z_k|^2&=&|\delta_{k_0}|^2+|u|^2|\delta_{k_0+1}|^2+\dots +|u|^{2(N-k_0)}|\delta_N|^2\\
&=&\frac{1}{|u|^{2(k_0-1)}}\left(\prod_{\ell=1}^{k_0-1}|d_\ell|^2+\dots +\prod_{\ell=1}^{N-1}|d_\ell|^2\right)
\end{eqnarray}

The first sum is equal to
\begin{equation}
\sum_{k=1}^{k_0-1}|\delta_k|^2|z_k|^2=\frac{|u|^{2N}}{|u|^{2(k_0-1)}}\left(1+\prod_{\ell=1}^1|d_\ell|^2+\prod_{\ell=1}^2|d_\ell|^2+\dots+\prod_{\ell=1}^{k_0-2}|d_\ell|^2\right)
\end{equation}

Then, Theorem~\ref{Variational} shows that
\begin{eqnarray}
s_{\min}^2(D+U_N)&=&\frac{c_0\prod_{\ell=1}^{k_0-1}|d_\ell|^2}{\sum_{L=k_0-1}^{N-1}\prod_{\ell=1}^L|d_\ell|^2+\prod_{\ell=1}^N|d_\ell|^2\left(1+\sum_{L=1}^{k_0-2}\prod_{\ell=1}^L|d_\ell|^2\right)}\\
&\leq&\frac{c_0}{\sum_{L=k_0-1}^N\prod_{\ell=k_0}^L|d_\ell|^2}\\
\end{eqnarray}

We can now optimize in $k_0\in\{1, \dots, N\}$ so that
\begin{eqnarray}
s_{\min}^2(D+U_N)&\leq&\frac{c_0}{\max_{1\leq k_0\leq N}\sum_{k_0\leq L\leq N}\prod_{\ell=k_0}^L|d_\ell|^2}\\
&=&\frac {c_0(D)}{\gamma_N(D)}
\end{eqnarray}
which proves the upper bound for the smallest singular value of $D+U_N$.
\end{proof}

\section{ Proof of Theorem~\ref{OneSided}}\label{section_proof_onesided}

In this section we prove Theorem~\ref{OneSided}, using Corollary~\ref{OperatorNorm} and the bounds established in the two preceding sections.

We will prove below the following lower bound on $s_{min}(D+U_N)$.

\begin{theorem}\label{sharpboundOneSided}\
\begin{enumerate}
\item
Assume that $|d_\ell| <1$, for every $1 \leq \ell \leq N$. Define $\epsilon_N= 1-\max_{1 \leq \ell \leq N} |d_\ell| >0$. Then,
\begin{equation}
s_{min}(D+U_N) \geq \frac{1}{2\sqrt{2}} \epsilon_N
\end{equation}
\item
Assume that $|d_\ell| >1$, for every $1 \leq \ell \leq N$. Define $\epsilon_N=  1 - (\min_{1 \leq \ell \leq N} |d_\ell|)^{-1}  >0$. Then,
\begin{equation}
s_{min}(D+U_N) \geq \frac{1}{2\sqrt{2}} \epsilon_N
\end{equation}
\end{enumerate}
\end{theorem}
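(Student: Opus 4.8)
The plan is to estimate $\bigl\|(D+U_N)^{-1}\bigr\|$ directly, since by (the proof of) Corollary~\ref{OperatorNorm} one has $s_{\min}(D+U_N)=\bigl\|(D+U_N)^{-1}\bigr\|^{-1}$; thus it suffices to prove $\bigl\|(D+U_N)^{-1}\bigr\|\le 1/\epsilon_N$ in each of the two cases, which in fact yields the stronger bound $s_{\min}(D+U_N)\ge\epsilon_N$. The unifying idea is to factor out a unitary, resp.\ a diagonal, matrix so that what is left is of the form $I+W$, where $W$ is a \emph{weighted $N$-cyclic shift}, i.e.\ (up to cyclic relabelling of the basis) $We_j=w_j e_{j\pm1}$; for such a $W$ both $\|W\|$ and the scalar matrix $W^{N}$ are explicit.

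In case (i), write $D+U_N=U_N\,(I+W)$ with $W:=U_N^{-1}D$. Then $W^{\ast}W=D^{\ast}D$, so $\|W\|=\max_\ell|d_\ell|=1-\epsilon_N=:r<1$, while $We_j=d_j\,e_{j+1\bmod N}$ gives $W^{N}=\bigl(\prod_{\ell=1}^{N}d_\ell\bigr)I=:\nu I$ with $|\nu|=\prod_\ell|d_\ell|\le r^{N}$. Since $\|W\|<1$ the Neumann series $(I+W)^{-1}=\sum_{k\ge0}(-1)^{k}W^{k}$ converges; regrouping the terms according to the residue of $k$ modulo $N$ and using $W^{N}=\nu I$ gives
\begin{equation}
(I+W)^{-1}=\frac{1}{1-(-1)^{N}\nu}\sum_{j=0}^{N-1}(-1)^{j}W^{j}.
\end{equation}
Hence, using $|1-(-1)^{N}\nu|\ge 1-|\nu|\ge 1-r^{N}$ together with $\|W^{j}\|\le r^{j}$,
\begin{equation}
\bigl\|(D+U_N)^{-1}\bigr\|=\bigl\|(I+W)^{-1}\bigr\|\le\frac{1}{1-r^{N}}\sum_{j=0}^{N-1}r^{j}=\frac{1}{1-r^{N}}\cdot\frac{1-r^{N}}{1-r}=\frac{1}{\epsilon_N},
\end{equation}
so $s_{\min}(D+U_N)\ge\epsilon_N\ge\frac{1}{2\sqrt2}\,\epsilon_N$. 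Case (ii) is handled the same way after writing $D+U_N=D\,(I+W)$ with $W:=D^{-1}U_N$: now $W^{\ast}W=U_N^{\ast}|D^{-1}|^{2}U_N$, so $\|W\|=(\min_\ell|d_\ell|)^{-1}=1-\epsilon_N=:r<1$, and $We_j=d_{j-1}^{-1}e_{j-1\bmod N}$ gives $W^{N}=\bigl(\prod_\ell d_\ell\bigr)^{-1}I$ with norm $\le r^{N}$; the same computation gives $\|(I+W)^{-1}\|\le 1/\epsilon_N$, and since $\|D^{-1}\|=r<1$ we conclude $\bigl\|(D+U_N)^{-1}\bigr\|\le\|D^{-1}\|\,\|(I+W)^{-1}\|\le 1/\epsilon_N$, hence $s_{\min}(D+U_N)\ge\epsilon_N$ again.

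I do not expect a genuine obstacle here; the two points requiring care are the legitimacy of the regrouping of the Neumann series (valid because $|\nu|\le r^{N}<1$, so the inner geometric series $\sum_{m\ge0}((-1)^{N}\nu)^{m}$ converges absolutely) and the elementary inequality $1-|\nu|\ge 1-r^{N}$, which produces the cancellation of the factor $1-r^{N}$; without it one only obtains $\bigl\|(D+U_N)^{-1}\bigr\|=O(\epsilon_N^{-2})$, which would be too weak. For case (i) one may alternatively argue from the explicit formula $(D+U_N)^{-1}=B(D)+C(D)$ of Theorem~\ref{Inversion} (as the section heading suggests): the moduli of the entries of $B(D)$ are dominated by those of $\sum_{k=1}^{N-1}r^{k-1}S^{k}$, $S$ the down-shift, giving $\|B(D)\|\le(1-r)^{-1}$, while $\|C(D)\|=\|E(D)\|\,\|F(D)\|\,\bigl|1-(-1)^{N}\prod_\ell d_\ell\bigr|^{-1}\le(1-r^{2N})\bigl[(1-r^{2})(1-r^{N})\bigr]^{-1}=(1+r^{N})(1-r^{2})^{-1}\le(1-r)^{-1}$, whence $\|B(D)+C(D)\|\le2/\epsilon_N$; this second route does not adapt to case (ii), since there the entries of $B(D)$ grow geometrically, which is precisely why the factorization argument above is preferable.
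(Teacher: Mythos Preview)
Your argument is correct and in fact proves the sharper bound $s_{\min}(D+U_N)\ge\epsilon_N$ in both cases. The route, however, is genuinely different from the paper's. The paper works from the explicit inversion formula $(D+U_N)^{-1}=B(D)+C(D)$ of Theorem~\ref{Inversion}: it bounds $\|C(D)\|$ directly (as you also sketch in your last paragraph), but for $\|B(D)\|$ it expands the quadratic form and reduces to a Toeplitz operator norm estimate (Lemma~\ref{BoundToeplitz}), obtaining $\|B(D)\|^2\le 2/\epsilon_N^2$ and $\|C(D)\|^2\le 2/\epsilon_N^2$, hence only $s_{\min}\ge\epsilon_N/(2\sqrt2)$. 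For case~(ii) the paper does not repeat this analysis but invokes a separate duality result (Lemma~\ref{duality}) relating $s_{\min}(D+U_N)$ to $s_{\min}(\hat D^{\ast}+U_N)$ with $|\hat d_\ell|<1$.

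Your factorization $D+U_N=U_N(I+W)$, resp.\ $D(I+W)$, with $W$ a weighted cyclic shift is more conceptual: the key observation that $W^N$ is scalar turns the Neumann series into a finite sum divided by $1-(-1)^N\nu$, and the cancellation of $1-r^N$ against the partial geometric sum gives the clean $1/\epsilon_N$. This buys you a better constant, avoids the Toeplitz lemma entirely, and treats both cases by the same mechanism without a duality step. Your alternative sketch via entrywise domination of $B(D)$ by $\sum_{k\ge1}r^{k-1}S^k$ is also correct and already improves the paper's bound on $\|B(D)\|$ to $1/\epsilon_N$ (the paper's Toeplitz argument only reaches $\sqrt2/\epsilon_N$); as you note, that route indeed breaks down in case~(ii), which is exactly why the paper needs the duality lemma there.
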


This result implies immediately Theorem~\ref{OneSided} since we know that
\begin{equation}
s_{min}(A)= \min_{1 \leq i \leq K(\sigma)} s_{min}(D_i+U_{N_i}) \geq \frac{1}{2\sqrt{2}} \min_{1 \leq i \leq K(\sigma)} \epsilon_{N_i} =  \frac{1}{2\sqrt{2}} \epsilon_N
\end{equation}

\begin{proof}[Proof of Theorem~\ref{sharpboundOneSided}]

We begin by considering the first case, where $|d_\ell| <1$ for every $1 \leq \ell \leq N$.

We have seen that by Corollary~\ref{OperatorNorm}, 
\begin{equation}
s_{min}(D+U_N) = || B(D) + C(D)||^{-1}
\end{equation}
Obviously,
\begin{equation}
|| C(D)+B(D)|| \leq ||C(D)|| + ||B(D)|| 
\end{equation}
It thus suffices to prove the following upper bounds for the operator norms of $ B(D)$ and of $C(D)$.

\begin{lemma}\label{OperatorNormsBD}\
\begin{enumerate}
\item The operator (or HS) norm of the matrix $C(D)$ is bounded above by
\begin{equation}
|| C(D) ||^2 \leq \frac{2}{\epsilon_N^2}
\end{equation}
\item The operator norm of the matrix $B(D)$ is bounded above by
\begin{equation}\label{BoundB(D)}
|| B(D) ||^2 \leq \frac{2}{\epsilon_N^2}
\end{equation}

\end{enumerate}
\end{lemma}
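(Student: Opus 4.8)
The plan is to bound the operator norms of $C(D)$ and $B(D)$ separately, exploiting the fact that when all $|d_\ell| < 1$ the products $\beta_{k,m}(D)$ (in absolute value, just products of the $|d_\ell|$) decay geometrically in $m-k$. Recall $|\beta_{k,m}| = \prod_{\ell=k}^m |d_\ell| \leq (1-\epsilon_N)^{m-k+1}$ for $k \leq m$, and $|\beta_{k,m}| = 1$ when $m = k-1$. In particular every ratio $|\beta_{k,m}| \leq (1-\epsilon_N)^{m-k+1}$ whenever $m \geq k$, and we always have $\sum_{j\geq 0}(1-\epsilon_N)^j = 1/\epsilon_N$, with $\sum_{j\geq 0}(1-\epsilon_N)^{2j} = 1/(1-(1-\epsilon_N)^2) \leq 1/\epsilon_N^2$; these are the only analytic facts needed.

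For $C(D)$: since $C(D)$ has rank one, $\|C(D)\| = \|C(D)\|_{HS} = \rho_N^{(1)}(D)^{1/2}$, and by Definition~\ref{definitionrho},
\begin{equation}
\|C(D)\|^2 = \rho_N^{(1)}(D) = \frac{1}{c_0(D)}\Big(\sum_{k=1}^N |\beta_{1,k-1}|^2\Big)\Big(\sum_{m=1}^N |\beta_{m+1,N}|^2\Big).
\end{equation}
Each of the two sums is at most $\sum_{j\geq 0}(1-\epsilon_N)^{2j} \leq 1/\epsilon_N^2$ (the $k=1$ term $|\beta_{1,0}|^2 = 1$ and then geometric decay; similarly the $m=N$ term $|\beta_{N+1,N}|^2 = 1$). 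Meanwhile $c_0(D) = |(-1)^N - \prod_{\ell=1}^N d_\ell|^2 \geq (1 - \prod|d_\ell|)^2 \geq \epsilon_N^2$ since $\prod |d_\ell| \leq 1-\epsilon_N$. Hence $\|C(D)\|^2 \leq \epsilon_N^{-2}\cdot\epsilon_N^{-2}/\epsilon_N^2$... — here I need to be careful: a crude bound gives $\epsilon_N^{-4}/\epsilon_N^2$, which is too weak, so the right grouping is to use one factor of $c_0(D)^{-1/2} \leq \epsilon_N^{-1}$ against one sum and note the other sum is actually bounded by $1/\epsilon_N$ (not $1/\epsilon_N^2$) after pairing telescoping terms with $c_0$; more precisely one shows $\frac{1}{c_0(D)}\sum_{k}|\beta_{1,k-1}|^2 \leq 2/\epsilon_N$ by a direct summation-by-parts argument, and $\sum_m |\beta_{m+1,N}|^2 \leq 1/\epsilon_N$, giving $\|C(D)\|^2 \leq 2/\epsilon_N^2$. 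This telescoping estimate is the delicate point of the lemma.

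For $B(D)$: I would bound the operator norm directly rather than via Hilbert–Schmidt (which would only give the weaker $\rho_N^{(2)}$). Since $B(D)_{i,j} = \beta_{j+1,i-1}\mathbf{1}_{j\leq i-1}$ with $|\beta_{j+1,i-1}| \leq (1-\epsilon_N)^{i-1-j}$, the matrix $|B(D)|$ (entrywise absolute value) is dominated entrywise by the Toeplitz matrix $T$ with $T_{i,j} = (1-\epsilon_N)^{i-j}$ for $i > j$. One then uses the Schur test: for the operator norm of $T$, bound $\max_i \sum_j |T_{i,j}| \leq \sum_{r\geq 1}(1-\epsilon_N)^r \leq 1/\epsilon_N$ and symmetrically $\max_j \sum_i |T_{i,j}| \leq 1/\epsilon_N$, so $\|B(D)\| \leq \|T\| \leq \big(\max_i\sum_j|T_{ij}|\big)^{1/2}\big(\max_j\sum_i|T_{ij}|\big)^{1/2} \leq 1/\epsilon_N \leq \sqrt{2}/\epsilon_N$, which gives \eqref{BoundB(D)}. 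Combining, $\|B(D)+C(D)\| \leq \|B(D)\| + \|C(D)\| \leq \sqrt{2}/\epsilon_N + \sqrt{2}/\epsilon_N = 2\sqrt{2}/\epsilon_N$, so $s_{\min}(D+U_N) \geq \epsilon_N/(2\sqrt{2})$.

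The second case, $|d_\ell| > 1$ for all $\ell$, reduces to the first: writing $D+U_N = U_N(U_N^{-1}D + \mathrm{Id}_N)$ and noting $U_N^{-1} = U_N^T$ is unitary and $U_N^{-1}D = D' U_N^{-1}$ up to relabeling — more cleanly, $(D+U_N)^* = D^* + U_N^* = D^*(\mathrm{Id}_N + (D^*)^{-1}U_N^*)$, and $(D^*)^{-1}U_N^*$ is, after a unitary conjugation/relabeling of the cyclic structure, of the same form $\tilde D + U_N$ with $|\tilde d_\ell| = 1/|d_\ell| < 1$ and $\tilde\epsilon_N = 1 - \max_\ell 1/|d_\ell| = \epsilon_N$. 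Since $s_{\min}(D+U_N) = s_{\min}((D+U_N)^*) \geq \big(\min_\ell |d_\ell|\big)\, s_{\min}(\tilde D + U_N) \geq 1\cdot \epsilon_N/(2\sqrt 2)$ using $\min|d_\ell| > 1$ and case~1 applied to $\tilde D$, the bound follows. I would state this duality as a short lemma (it is presumably Lemma~\ref{duality} referenced elsewhere) and invoke it here. The main obstacle, as flagged, is the telescoping bound $\frac{1}{c_0(D)}\sum_k |\beta_{1,k-1}|^2 \leq 2/\epsilon_N$: the naive bound loses a factor of $\epsilon_N$, and one must pair the partial products against the lower bound $c_0(D) \geq \epsilon_N^2$ more cleverly, e.g. by writing $1 - \prod_\ell|d_\ell| \geq \epsilon_N \cdot \max_k \prod_{\ell<k}|d_\ell|$ type inequalities, or by a direct Abel summation. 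Everything else is routine geometric-series bookkeeping.
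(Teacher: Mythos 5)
Your bound on $\|B(D)\|$ is correct and in fact a bit sharper than the paper's. Dominating $|B(D)_{i,j}| \leq r_N^{\,i-j-1}\1_{i\geq j+1}$ with $r_N := \max_\ell |d_\ell| = 1-\epsilon_N$ and applying the Schur test (row sums and column sums both $\leq \sum_{l\geq 0} r_N^l = 1/\epsilon_N$) gives $\|B(D)\| \leq 1/\epsilon_N$, which beats the stated $\sqrt 2/\epsilon_N$. The paper instead expands $\|B(D)y\|^2$, dominates the resulting quadratic form by a one-sided geometric Toeplitz kernel, and bounds its norm by a Fourier/symbol argument; your route is an equivalent but more direct way to the same Toeplitz estimate.

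The $\|C(D)\|$ estimate, however, has a real gap at exactly the point you flag as delicate. Your proposed intermediate inequality
\begin{equation}
\frac{1}{c_0(D)}\sum_{k=1}^N |\beta_{1,k-1}|^2 \ \leq\ \frac{2}{\epsilon_N}
\end{equation}
is false in general. Already for $N=1$ the left side is $1/c_0(D)$, and taking $d_1$ real near $-1$ gives $c_0(D) = (1-|d_1|)^2 = \epsilon_N^2$, so the left side is $1/\epsilon_N^2 > 2/\epsilon_N$ whenever $\epsilon_N < 1/2$. The root of the trouble is that you discard $c_0(D)$ down to the crude bound $\epsilon_N^2$ too early; after that, no regrouping of the two geometric sums recovers the lost power of $\epsilon_N$. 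The idea you need is to retain the sharper lower bound $c_0(D) \geq (1-r_N^N)^2$ (not merely $\geq \epsilon_N^2$) and let the factor $1-r_N^N$ cancel against the numerator. Concretely, each of $\sum_k |\beta_{1,k-1}|^2$ and $\sum_m |\beta_{m+1,N}|^2$ is at most $\sum_{j=0}^{N-1} r_N^{2j} = \frac{1-r_N^{2N}}{1-r_N^2}$, so
\begin{equation}
\|C(D)\|^2 = \rho^{(1)}_N(D)\ \leq\ \frac{(1-r_N^{2N})^2}{(1-r_N^N)^2(1-r_N^2)^2}
 \ =\ \frac{(1+r_N^N)^2}{(1+r_N)^2(1-r_N)^2}\ \leq\ \frac{1}{\epsilon_N^2}\ \leq\ \frac{2}{\epsilon_N^2},
\end{equation}
using $1-r_N^{2N}=(1-r_N^N)(1+r_N^N)$ and $1+r_N^N \leq 1+r_N$. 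This is the ``telescoping'' you were groping for: the factor $1-r_N^N$ from $c_0$ meets its twin inside $1-r_N^{2N}$, and the leftover ratio $(1+r_N^N)/(1+r_N) \leq 1$ is harmless. (Finally, the duality reduction for $|d_\ell|>1$ that you sketch is correct in spirit, but it belongs to the proof of Theorem~\ref{sharpboundOneSided} via Lemma~\ref{duality}, not to this lemma, which is stated under the hypothesis $|d_\ell|<1$.)
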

 
\begin{proof}[Proof of Lemma~\ref{OperatorNormsBD}]

Letting $r_N := \max_{1 \leq \ell \leq N} |d_{\ell}| < 1$, we note that
\begin{equation}\label{BoundBeta}
|\beta_{k,m}| \leq r_N^{(m-k+1)} \1_{k \leq m+1}
\end{equation}

We begin with the estimation of $||C(D)||$. We have seen that by Corollary~\ref{OperatorNorm}:
\begin{equation}
||C(D)||^2 = \rho^{(1)}_N(D)
\end{equation} 

Recall that
\begin{equation}
\rho^{(1)}_N(D) = ||C(D)||_{HS}^2= \frac{1}{c_0(D)} \sum_{k=1}^N \ |\beta_{1, k-1}|^2 \sum_{m=1}^N \ |\beta_{m+1,N}|^2
\end{equation}

We will need first the following trivial bound on $c_0(D)$
\begin{equation}
c_0(D) \geq (1 -r_N^N)^2 
\end{equation}
This bound is clear since 
\begin{equation}
c_0(D) = | \prod_{\ell=1}^N d_\ell - (-1)^N|^2
\end{equation}
And 
\begin{equation}
 | \prod_{\ell=1}^N d_\ell| \leq r_N^N
\end{equation}

So that, using the bound~\eqref{BoundBeta},
\begin{equation}
 \rho^{(1)}_N(D) \leq \frac{1}{(1-r_N^N)^2}  \sum_{k=1}^N r_N^{2(k-1)} \sum_{m=1}^N r_N^{2(N-m)}
\end{equation}
After an obvious reindexing,
\begin{equation}
 \rho^{(1)}_N(D) \leq \frac{1}{(1-r_N^N)^2} ( \sum_{k=0}^{N-1} r_N^{2k})^2 = \frac{1-r^{2N}_N}{(1-r_N^N)^2(1-r_N^2)}
\end{equation}
and finally
\begin{equation}
 \rho^{(1)}_N(D) \leq \frac{1+r^N_N}{(1-r_N^N)(1-r_N^2)}
\end{equation}
Since $0< r_N < 1$, it is clear that 
\begin{equation}
\frac{1+r^N_N}{(1-r_N^N)(1-r_N^2)} \leq \frac{2}{(1-r_N)^2} = \frac{2}{\epsilon_N^2}
\end{equation}

This proves the first item of our lemma.

We now estimate the operator norm $||B(D)|| = \max_{ y \in \C^N} \frac{||B(D)y||}{||y||}$.

Let $y \in \C^N$, then
\begin{equation}
||B(D)y||^2 = \sum_{k=1}^N \big|\sum_{m=1}^N \beta_{k+1,m-1} \1_{k \leq m-1} y_m\big|^2 \leq \sum_{k=1}^N \big(\sum_{m=1}^N |\beta_{k+1,m-1}| \1_{k \leq m-1} |y_m| \big)^2
\end{equation}

Expanding the square, and using the bound ~\eqref{BoundBeta}, we get

\begin{equation}
||B(D)y||^2 \leq \sum_{k=1}^N \sum_{m=1}^N \sum_{m'=1}^N \1_{k\leq m-1} \1_{k\leq m'-1}|\beta_{k+1,m-1}||\beta_{k+1,m'-1}|  |y_m| |y_{m'}|
\end{equation}

And thus
\begin{equation}
||B(D)y||^2 \leq \sum_{m=1}^N \sum_{m'=1}^N  a_{m,m'} |y_m| |y_{m'}|
\end{equation}
where
\begin{equation}
a_{m,m'} =  \sum_{k=1}^N \1_{k\leq m-1} \1_{k\leq m'-1}|\beta_{k+1,m-1}||\beta_{k+1,m'-1}|
\end{equation}

Using the bound ~\eqref{BoundBeta}, and assuming, wlog, that $m\leq m'$,we see that

\begin{equation}
a_{m,m'} \leq  \sum_{k=1}^N \1_{k\leq m-1} r_N^{m+m'-2k-2} = r_N^{m+m'-4} \sum_{l=0}^{m-2} r_N^{-2l} 
\end{equation}

So that

\begin{equation}
a_{m,m'} \leq  r_N^{m+m'-4} \frac{r_N^{-2m+4} -1}{r_N^{-2} -1} \leq \frac{ r_N^{m+m'-2m}}{r_N^{-2} - 1} \leq \frac{ 1}{\epsilon_N} r_N^{m'-m}
\end{equation}

Summarizing, 
\begin{equation}
||B(D)y||^2 \leq \frac{2}{\epsilon_N} \sum_{m=1}^N \sum_{m'=1}^N  r_N^{m'-m} \1_{m \leq m'}|y_m| |y_{m'}|
\end{equation}

It is now easy to conclude, using the following classical estimate.

\begin{lemma}\label{BoundToeplitz}
For any $r <1$, and any $a \in \C^N$
\begin{equation}
|\sum_{m=1}^N \sum_{m'=1}^N r^{m'-m} \1_{m \leq m'} a_m \overline{a_{m'}}| \leq \frac{1}{1-r} \sum_{m=1}^N |a_m|^2
\end{equation}
\end{lemma}

 Indeed we see then that 
 \begin{equation}
 ||B(D)y||^2 \leq \frac{2}{\epsilon_N^2} ||y||^2 
 \end{equation}
 which proves the bound ~\eqref{BoundB(D)} on the operator norm of $B(D)$.
 \end{proof}

For the sake of completeness, we provide here a proof of Lemma~\ref{BoundToeplitz}, which is a very simple case of classical bounds on operator norms of Toeplitz matrices.

 \begin{proof}[Proof of Lemma~\ref{BoundToeplitz}]
 Consider a doubly-infinite sequence $(t_m)_{m \in \Z}$ in $\ell^1(\Z)$.

Consider the function 
\begin{equation}
f(x) = \sum_{m=- \infty}^{\infty} t_m e^{imx} 
\end{equation}
Then

\begin{eqnarray}
\sum_{m=1}^N \sum_{m'=1}^N t_{m-m'} a_m \overline{a_{m'}} &=&\sum_{m=1}^N\sum_{m'=1}^N (\frac{1}{2\pi} \int_0^{2\pi} f(x) e^{-i(m-m')x} dx) a_m \overline{a_{m'}}\\
&=& \frac{1}{2\pi} \int_0^{2\pi} f(x) |\sum_{m=1}^N a_me^{-imx}|^2 dx
\end{eqnarray}

Using this identity, with the sequence $t_m = \1_{m=0}$, i.e for the constant function $ f(x)=1$, one sees that
\begin{equation}
 \frac{1}{2\pi} \int_0^{2\pi} |\sum_{m=1}^N a_me^{-imx}|^2 dx = \sum_{m=1}^N |a_m|^2
\end{equation}

So that, for a general sequence $(t_m)_{m \in \Z}$ in $\ell^1(\Z)$, we have
\begin{eqnarray}
|\sum_{m=1}^N \sum_{m'=1}^N t_{m-m'} a_m \overline{a_{m'}}| &\leq& \frac{1}{2\pi} \int_0^{2\pi} |f(x)| \Big|\sum_{m=1}^N a_me^{-imx}\Big|^2 dx\\
&\leq&||f||_{\infty} \frac{1}{2\pi} \int_0^{2\pi} \Big|\sum_{m=1}^N a_me^{-imx}\Big|^2 dx\\
&=&  ||f||_{\infty} \sum_{m=1}^N |a_m|^2
\end{eqnarray}

In order to prove our Lemma, it suffices to choose $t_m = r^m \1_{m \geq 0}$.
Then $f(x) = \frac{1}{1- re^{ix}}$, and
\begin{equation}
||f||_\infty = |f(0)| = \frac{1}{1-r}
\end{equation} 

\end{proof}

We still have to prove the second item of Theorem~\ref{sharpboundOneSided}, i.e the case where $|d_\ell| >1$ for every $1 \leq \ell \leq N$.
This second item is a direct consequence of the first, and of the following simple "duality" result.

\begin{lemma}\label{duality}
If $\forall \ell$, $|d_\ell|\geq \gamma>0$, then
\begin{equation}
s_{\min}(D+U_N) \geq \gamma\cdot s_{\min}(\hat{D}^{*}+U_N),
\end{equation}
where
\begin{equation}
\hat{D}=\text{diag}\left(\frac 1{d_N}, \frac 1{d_1}, \frac 1{d_2}, \dots, \frac 1{d_{N-1}}\right).
\end{equation}
\end{lemma}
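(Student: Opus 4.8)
The plan is to deduce the inequality from two standard properties of the smallest singular value: submultiplicativity, $s_{\min}(XY)\ge s_{\min}(X)\,s_{\min}(Y)$ for $N\times N$ matrices, and invariance of $s_{\min}(\cdot)$ under left or right multiplication by a unitary matrix and under passing to the adjoint. All of these follow immediately from the variational formula $s_{\min}(M)=\min_{\|x\|=1}\|Mx\|$.

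First I would factor $D+U_N=D\,(I_N+D^{-1}U_N)$. This is legitimate because $|d_\ell|\ge\gamma>0$ forces $D$ to be invertible, with $s_{\min}(D)=\min_{1\le\ell\le N}|d_\ell|\ge\gamma$. Submultiplicativity then gives $s_{\min}(D+U_N)\ge s_{\min}(D)\,s_{\min}(I_N+D^{-1}U_N)\ge\gamma\,s_{\min}(I_N+D^{-1}U_N)$, so it suffices to establish the identity $s_{\min}(I_N+D^{-1}U_N)=s_{\min}(\hat D^{*}+U_N)$.

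To prove this identity I would first take adjoints, $s_{\min}(I_N+D^{-1}U_N)=s_{\min}\bigl(I_N+U_N^{*}(D^{*})^{-1}\bigr)$, and then pull out the unitary factor $U_N^{*}$: since $U_N^{*}U_N=I_N$ one has $I_N+U_N^{*}(D^{*})^{-1}=U_N^{*}\bigl(U_N+(D^{*})^{-1}\bigr)$, whence $s_{\min}(I_N+D^{-1}U_N)=s_{\min}\bigl(U_N+(D^{*})^{-1}\bigr)$. The remaining step is a direct computation with the cyclic permutation matrix: conjugating a diagonal matrix by $U_N$ cyclically shifts its diagonal entries, and one checks straight from the definitions that $\hat D=U_N^{*}D^{-1}U_N$, hence $\hat D^{*}=U_N^{*}(D^{*})^{-1}U_N$, and therefore $U_N\,(\hat D^{*}+U_N)\,U_N^{*}=(D^{*})^{-1}+U_N$. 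By unitary invariance $s_{\min}(\hat D^{*}+U_N)=s_{\min}\bigl(U_N+(D^{*})^{-1}\bigr)$, which closes the chain of equalities and finishes the proof.

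The whole argument is essentially bookkeeping with unitary conjugations; the only point that needs a moment's care is matching the indices in the identity $\hat D=U_N^{*}D^{-1}U_N$ — that is, getting the direction of the cyclic shift right — which is exactly the reason the diagonal of $\hat D$ is written starting with $1/d_N$ rather than $1/d_1$. I do not anticipate any genuine obstacle.
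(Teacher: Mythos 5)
Your proof is correct, and it reaches the same conclusion by a somewhat different (cleaner) algebraic route than the paper.

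The paper works directly with the variational characterization $s_{\min}^2(D+U_N)=\min_x \frac{\sum_k |d_k x_k + x_{(k+1)}|^2}{\sum_k|x_k|^2}$, factors $|d_k|^2$ out of each summand to get $\geq \gamma^2 \min_x \frac{\sum_k |x_k + d_k^{-1}x_{(k+1)}|^2}{\sum_k|x_k|^2}$, recognizes the latter minimum as $s_{\min}^2(\hat D + U_N^{-1})$ after writing that matrix out explicitly, and finally observes $\hat D + U_N^{-1} = (\hat D^{*}+U_N)^{*}$. You instead encode the factoring-out step as the matrix identity $D+U_N = D\,(I+D^{-1}U_N)$ together with submultiplicativity $s_{\min}(XY)\ge s_{\min}(X)s_{\min}(Y)$, and you replace the index bookkeeping by the clean conjugation identity $\hat D = U_N^{*}D^{-1}U_N$ plus unitary invariance of $s_{\min}$. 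The underlying observations are the same — lower-bound by pulling $D$ out, then identify the singular values of the quotient with those of $\hat D^{*}+U_N$ via adjoints and the cyclic shift — but your version avoids writing the matrix $\hat D + U_N^{-1}$ and its quadratic form explicitly, and makes the role of the cyclic shift in the definition of $\hat D$ more transparent. Both are complete proofs; yours is arguably the more conceptual packaging.
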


\begin{proof}[Proof of Lemma~\ref{duality}]
The matrix $U_N^{-1}$ is associated to the cycle $(N \ N-1 \ \dots \ 1)$, i.e. 
\begin{equation}
U_N^{-1}=\begin{pmatrix}0 & & & 1\\ 1 & \ddots & & \\ & \ddots & \ddots & \\ 0 & & 1 & 0\end{pmatrix},
\end{equation}
so that 
\begin{equation}
\hat{D}+U_N^{-1}=\begin{pmatrix}\frac 1{d_N} & & & 1\\ 1 & \frac 1{d_1} & & \\ & \ddots & \ddots & \\ 0 & & 1 & \frac 1{d_{N-1}}\end{pmatrix}
\end{equation}
and

\begin{equation}
s_{\min}^2(\hat{D}+U_N^{-1})=\min_x\frac{\sum_k|x_k+\frac 1{d_k}x_{(k+1)}|^2}{\sum_k|x_k|^2}.
\end{equation}

But
\begin{eqnarray}
s_{\min}^2(D+U_N)&=&\min_x\frac{\sum_k|d_kx_k+x_{(k+1)}|^2}{\sum_k|x_k|^2}\\
&=& \min_x\frac{\sum_k|d_k|^2|x_k+\frac 1{d_k}x_{(k+1)}|^2}{\sum_k|x_k|^2},
\end{eqnarray}
so that

\begin{equation}
s_{\min}^2(D+U_N)\geq \gamma^2 s_{\min}^2(\hat{D}+U_N^{-1}).
\end{equation}
But, since $U_N$ is unitary, then $\hat{D}+U_N^{-1} = \hat{D}+U_N^{*}= (\hat{D}^{*}+U_N)^{*}$.
The singular values of $\hat{D}+U_N^{-1}$ are thus  the same as the singular values of $\hat{D^{*}}+U_N$. So  that
\begin{equation}
s_{\min}^2(\hat{D}+U_N^{-1}) = s_{\min}^2(\hat{D}^{*}+U_N)
\end{equation}
This concludes the proof of Lemma~\ref{duality}.
\end{proof}

The moduli  $ \frac{1}{|d_i|}$of the diagonal elements of $\hat{D}^{*}$ are all  smaller than $1$, 
we can then apply the first item of Theorem~\ref{sharpboundOneSided}, and conclude that, with
$\epsilon_N=  1 - (\min_{1 \leq \ell \leq N} |d_\ell|)^{-1}  >0$, we have
\begin{equation}
s_{\min}(D+U_N) = s_{\min}^2(\hat{D}^{*}+U_N) \geq \frac{1}{2\sqrt{2}} \epsilon_N
\end{equation}

This concludes the proof of Theorem~\ref{sharpboundOneSided}

\end{proof}

\section{The case of i.i.d. entries}\label{sectioniid} 
\subsection{Proof of Theorem~\ref{NegativeDrift1}}\label{negd1}

In this chapter, we give a proof of Theorem~\ref{NegativeDrift1} and Theorem~\ref{NegativeDrift2}.
Our main tool will be the classical theory of excursions and fluctuations for $1$-d random walks.
We will have to go a bit further than the classical theory (see references in the Appendix).

Define $\xi_i=2\log|d_i|$ and

\begin{equation}
S_n=\sum_{i=1}^n\xi_i=\log\prod_{i=1}^n|d_i|^2.
\end{equation}

$S_n$ is a random walk with negative drift.

It is related to our problem by the obvious formula, for $1 \leq k \leq m$
\begin{equation}
|\beta_{k,m}|^2 = \exp{ (S_m - S_{k-1})}
\end{equation}
which is also valid for $k= m+1$, since by definition $ \beta_{m+1,m} = 1$.\\

We will now show that the lower and upper bounds we have found for $s_{min}(D+U_N)$ can easily be controlled in terms of functionals of the random walk $S_n$. Indeed, our upper bound estimates are in terms of the quantities $c_0(D)$, $\gamma_N(D)$ (see Definition~\ref{definitionrho}). Our lower bound estimates are given in terms of the quantities $\rho^{(1)}_N(D)$ and $\rho^{(2)}_N(D)$ (see Definition~\ref{definitionrho}).  We estimate these quantities in terms of functionals of the random walk $S_n$ in the next lemma. We first introduce the relevant functionals of the random walk $S_n$.

\begin{definition}\label{functionalsRW}
We define the following important functionals of the random walk $S_n$.

\begin{enumerate}
\item
\begin{equation}
U_\infty=\sum_{k=1}^\infty e^{S_k}
\end{equation}
\item
\begin{equation}
M_N=\max_{1\leq k \leq  m \leq N}(S_m-S_k).
\end{equation}
\item
\begin{equation}
T_N=\sum_{1\leq k \leq m \leq N}e^{S_m-S_k}.
\end{equation}
\end{enumerate} 
\end{definition}

We now restate the bounds obtained in Section ~\ref{LowerBounds} and Section~\ref{UpperBounds} in terms of these functionals.
\begin{theorem} \label{ThmboundsRW}
With the notations above
\begin{equation}
(2c_0(D)^{-1}(1+U_\infty)(1+\hat U_\infty) + 2T_N)^{-1} \leq s_{min}^2(D+U_N) \leq c_0(D) e^{-M_N}
\end{equation}
Where $\hat U_\infty$ is a random variable with the same distribution as $U_\infty$.

\end{theorem}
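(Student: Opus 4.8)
The plan is to translate the deterministic bounds of Theorem~\ref{ExplicitLowerBounds-1Cycle}, namely $\frac{1}{\rho_N(D)} \leq s^2_{\min}(D+U_N) \leq \frac{c_0(D)}{\gamma_N(D)}$, into statements about the random walk $S_n$ using the basic identity $|\beta_{k,m}|^2 = e^{S_m - S_{k-1}}$ (valid also for $k = m+1$). First I would handle the upper bound: by definition $\gamma_N(D) = \max_{1\leq k \leq N}\sum_{m=1}^N |\beta_{k,m}|^2 = \max_{1 \leq k \leq N}\sum_{m=1}^N e^{S_m - S_{k-1}} \geq \max_{1 \leq k \leq m \leq N} e^{S_m - S_{k-1}} = e^{M_N}$ (reindexing $k-1 \rightsquigarrow k$, and noting the max over the single term where $m \geq k-1$ already dominates; here one should be slightly careful that $M_N$ in Definition~\ref{functionalsRW} is $\max_{1\le k\le m\le N}(S_m - S_k)$, which matches the exponents $S_m - S_{k-1}$ after the shift, the boundary term $k=N+1$ giving exponent $0 = S_N - S_N$). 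Hence $\frac{c_0(D)}{\gamma_N(D)} \leq c_0(D) e^{-M_N}$, which gives the right-hand inequality.

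Next I would treat the lower bound, i.e. show $\rho_N(D) \leq 2 c_0(D)^{-1}(1+U_\infty)(1+\hat U_\infty) + 2 T_N$. Recall $\rho_N(D) = 2(\rho^{(1)}_N(D) + \rho^{(2)}_N(D))$. For the second piece, $\rho^{(2)}_N(D) = \sum_{1 \leq k \leq m-1 \leq N} |\beta_{k+1,m-1}|^2 = \sum_{1 \leq k \leq m-1 \leq N} e^{S_{m-1} - S_k} \leq \sum_{1 \leq k \leq m \leq N} e^{S_m - S_k} = T_N$ after reindexing $m-1 \rightsquigarrow m$ (the inequality is really an equality up to the boundary/range bookkeeping, but $\leq T_N$ is all that is needed). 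For the first piece, $\rho^{(1)}_N(D) = c_0(D)^{-1}\big(\sum_{k=1}^N |\beta_{1,k-1}|^2\big)\big(\sum_{m=1}^N |\beta_{m+1,N}|^2\big) = c_0(D)^{-1}\big(\sum_{k=1}^N e^{S_{k-1}}\big)\big(\sum_{m=1}^N e^{S_N - S_m}\big)$. The first factor is $\sum_{k=0}^{N-1} e^{S_k} = 1 + \sum_{k=1}^{N-1} e^{S_k} \leq 1 + U_\infty$ (since $S_0 = 0$ and all terms are nonnegative, extending the sum to infinity only increases it). For the second factor, I would write $S_N - S_m = \sum_{i=m+1}^N \xi_i$; since the $\xi_i$ are i.i.d., the reversed partial sums $(S_N - S_{N-j})_{j \geq 1}$ have the same joint law as $(S_j)_{j\geq 1}$, so $\sum_{m=1}^N e^{S_N - S_m} = \sum_{j=0}^{N-1} e^{\tilde S_j}$ with $\tilde S_j \overset{d}{=} S_j$ jointly in $j$, and this is $\leq 1 + \hat U_\infty$ where $\hat U_\infty := \sum_{j\geq 1} e^{\tilde S_j}$ has the same distribution as $U_\infty$. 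Combining, $\rho^{(1)}_N(D) \leq c_0(D)^{-1}(1+U_\infty)(1+\hat U_\infty)$, and therefore $\rho_N(D) \leq 2c_0(D)^{-1}(1+U_\infty)(1+\hat U_\infty) + 2 T_N$, which inverts to the claimed lower bound on $s^2_{\min}$.

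The only genuinely delicate point — and the one I would state carefully — is the distributional identity for the reversed walk: $\hat U_\infty$ is not the same random variable as $U_\infty$ but has the same law, and if one wants the stronger pathwise bound one should note that $U_\infty = \sum_{k\geq 1} e^{S_k} < \infty$ a.s. by the negative-drift hypothesis (so both $U_\infty$ and $\hat U_\infty$ are a.s. finite). All the remaining steps are elementary reindexings of finite sums and the trivial inequality $\sum_{\text{finite}} \leq \sum_{\text{infinite}}$ for nonnegative terms. I do not expect any real obstacle beyond being bookkeeping-careful with the index ranges ($k$ vs $k-1$, $m$ vs $m-1$, and the conventions $\beta_{m+1,m}=1$, $S_0 = 0$) so that the boundary terms land in the right place.
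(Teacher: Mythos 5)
Your proposal is correct and follows essentially the same route as the paper: it uses the deterministic bounds of Theorem~\ref{ExplicitLowerBounds-1Cycle} together with the identity $|\beta_{k,m}|^2 = e^{S_m - S_{k-1}}$, bounds $\gamma_N(D)$ below by the single largest term $e^{M_N}$, bounds $\rho^{(2)}_N(D)$ by $T_N$ after reindexing, and handles $\rho^{(1)}_N(D)$ by bounding the two factors by $1+U_\infty$ and by $1+\hat U_\infty$ via the same time-reversal / distributional-equality argument the paper uses in Lemma~\ref{controlRW}. The one detail you flag (that $\hat U_\infty$ equals $U_\infty$ only in law, not pathwise) is exactly the point the paper also records.
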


\begin{remark}
The behavior the distributions of the functionals $U_\infty$ and $M_N$ has been fully understood and is a central topic of the classical fluctuation theory of random walks (see the Appendix for references and relevant statements). The behavior of the functional $T_N$ has not been studied, and we will have to derive it in the Appendix. We lose a logarithmic term there. It is plausible that one could get rid of this logarithmic correction, extending some of the best tools available in the classical theory, but the needed effort might be sizable.
\end{remark}

\begin{proof}[Proof of Theorem~\ref{ThmboundsRW}]
The proof of this theorem is a consequence of the following lemma.

\begin{lemma}\label{controlRW}
With the notations above, we have
\begin{enumerate}

\item
\begin{equation}
\gamma_N \geq \exp M_N,
\end{equation}

\item
\begin{equation}
\rho^{(1)}_N(D) \leq \frac{1}{c_0(D)} (1+U_\infty)(1+ \hat U_\infty),
\end{equation}
where $U_\infty$ and $\hat U_\infty$ are two random variable distributed as $U_\infty$ (but not independent).
\item
\begin{equation}
\rho^{(2)}_N(D) \leq T_N
\end{equation}

\end{enumerate}
\end{lemma}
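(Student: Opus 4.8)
The plan is to prove the three bounds of Lemma~\ref{controlRW} directly from the definitions in Definition~\ref{definitionrho}, using only the elementary dictionary between the quantities $\beta_{k,m}(D)$ and the random walk $S_n$: the identity $|\beta_{k,m}|^2 = e^{S_m - S_{k-1}}$ holds for all $1\le k\le m\le N$ and also for $k=m+1$ (both sides then equal $1$). Once Lemma~\ref{controlRW} is established, Theorem~\ref{ThmboundsRW} follows by plugging the three estimates into the inequality $\frac{1}{\rho_N(D)}\le s^2_{\min}(D+U_N)\le\frac{c_0(D)}{\gamma_N(D)}$ of Theorem~\ref{ExplicitLowerBounds-1Cycle}, using $\rho_N(D)=2(\rho^{(1)}_N(D)+\rho^{(2)}_N(D))\le 2\bigl(c_0(D)^{-1}(1+U_\infty)(1+\hat U_\infty)+T_N\bigr)$ and $\gamma_N(D)\ge e^{M_N}$.

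Item $(3)$ is really an identity. Reindexing the sum defining $\rho^{(2)}_N(D)$ by $a=k$, $b=m-1$, the constraint $1\le k\le m-1\le N$ becomes $1\le a\le b\le N$, while $|\beta_{k+1,m-1}|^2=|\beta_{a+1,b}|^2=e^{S_b-S_a}$; hence $\rho^{(2)}_N(D)=\sum_{1\le a\le b\le N}e^{S_b-S_a}=T_N$, which in particular gives $\rho^{(2)}_N(D)\le T_N$. For item $(1)$, I would bound $\gamma_N(D)$ from below by a single term in the maximum defining it: for any $1\le k\le m\le N$ we have $\gamma_N(D)\ge|\beta_{k,m}|^2=e^{S_m-S_{k-1}}$, and also $\gamma_N(D)\ge 1$ (a boundary term $|\beta_{k,k-1}|^2$). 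Taking the best such term and setting $j=k-1$ yields $\log\gamma_N(D)\ge\max\bigl(0,\ \max_{0\le j<m\le N}(S_m-S_j)\bigr)\ge\max\bigl(0,\ \max_{1\le k<m\le N}(S_m-S_k)\bigr)=M_N$, i.e. $\gamma_N(D)\ge e^{M_N}$.

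Item $(2)$ is the only one requiring a genuinely probabilistic idea, and I expect it to be the main obstacle. Using $|\beta_{1,k-1}|^2=e^{S_{k-1}}$ and $|\beta_{m+1,N}|^2=e^{S_N-S_m}$, the definition of $\rho^{(1)}_N(D)$ becomes $\rho^{(1)}_N(D)=c_0(D)^{-1}\bigl(\sum_{j=0}^{N-1}e^{S_j}\bigr)\bigl(\sum_{m=1}^N e^{S_N-S_m}\bigr)$. The first factor equals $1+\sum_{j=1}^{N-1}e^{S_j}\le 1+U_\infty$ by monotonicity of partial sums of a nonnegative series. For the second factor I would use time reversal: setting $\hat S_j:=S_N-S_{N-j}=\sum_{i=1}^j\xi_{N+1-i}$, one has $\sum_{m=1}^N e^{S_N-S_m}=\sum_{j=0}^{N-1}e^{\hat S_j}=1+\sum_{j=1}^{N-1}e^{\hat S_j}$. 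Since the $\xi_i$ are i.i.d., the reversed block $(\xi_N,\xi_{N-1},\dots,\xi_1)$ has the same law as $(\xi_1,\dots,\xi_N)$; appending to it an independent i.i.d. copy of the tail produces a walk whose associated functional $\hat U_\infty$ has exactly the law of $U_\infty$ and satisfies $\sum_{j=1}^{N-1}e^{\hat S_j}\le\hat U_\infty$. Hence $\rho^{(1)}_N(D)\le c_0(D)^{-1}(1+U_\infty)(1+\hat U_\infty)$ with $U_\infty$ and $\hat U_\infty$ coupled (not independent) but each distributed as $U_\infty$.

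The delicate point is precisely this coupling: producing $\hat U_\infty$ as a bona fide infinite-walk functional with the correct marginal law while retaining the finite-$N$ domination $\sum_{j=1}^{N-1}e^{\hat S_j}\le\hat U_\infty$ on the same probability space as the $\xi_i$. Everything else is bookkeeping with the $\beta_{k,m}$'s and with the boundary terms $\beta_{k,k-1}=1$, which are what generate the additive $1$'s appearing in the two factors of the bound on $\rho^{(1)}_N(D)$ and in the comparison $\gamma_N(D)\ge e^{M_N}$.
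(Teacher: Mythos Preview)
Your proof is correct and follows essentially the same route as the paper: the same dictionary $|\beta_{k,m}|^2=e^{S_m-S_{k-1}}$, the same single-term lower bound on $\gamma_N$, the same factorization of $\rho^{(1)}_N$ into $(1+U_N)(1+\hat U_N)/c_0(D)$ via time reversal, and the same reindexing that turns $\rho^{(2)}_N$ into $T_N$ (indeed an equality, as you note). The only cosmetic difference is in the coupling for $\hat U_\infty$: the paper sets $\hat\xi^N_k=\xi_{N-k+1}$ for $k\le N$ and $\hat\xi^N_k=\xi_k$ for $k>N$, reusing the original tail rather than appending an independent one, so no enlargement of the probability space is needed---your ``delicate point'' thus has an even simpler resolution than you anticipated.
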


\begin{proof}[Proof of Lemma~\ref{controlRW}]


We recall that
\begin{equation}
\gamma_N(D) = \max_{1\leq k \leq N} \sum_{m=1}^N |\beta_{k,m}|^2 \geq  \max_{1\leq k \leq N} \max_{1\leq m \leq N} |\beta_{k,m}|^2.
\end{equation}
Thus,
\begin{equation}
\gamma_N \geq  \exp{M_N},
\end{equation}
which proves the first item.

To prove the second item, recall that
\begin{equation}
\rho^{(1)}_N(D) =  \frac{1}{c_0(D)} \sum_{k=1}^N \ |\beta_{1, k-1}|^2 \sum_{m=1}^N \ |\beta_{m+1,N}|^2
\end{equation}

Define the two random variables 
\begin{equation}
U_N= \sum_{\ell=1}^{N-1} e^{S_{\ell}} 
\end{equation}
and
\begin{equation}
\hat U_N= \sum_{\ell=1}^{N-1} e^{S_N - S_{\ell} }
\end{equation}

It is obvious that

\begin{equation}
\sum_{k=1}^N \ |\beta_{1, k-1}|^2 = \sum_{k=1}^N e^{S_{k-1}} = 1 + \sum_{\ell=1}^{N-1} e^{S_{\ell}} \leq 1 + U_N
\end{equation}

Similarly

\begin{equation}
\sum_{m=1}^N \ |\beta_{m+1, N}|^2 = 1 + \sum_{\ell=1}^{N-1} e^{S_N - S_{\ell} } \leq 1 + \hat U_N
\end{equation}

So that 

\begin{equation}
\rho^{(1)}_N(D) =  \frac{1}{c_0(D)} ( 1 + U_N)(1+\hat U_N)
\end{equation}

Consider the sequence of random variables $\xi = (\xi_k)_{ k\geq 1}$, and the sequence of random variables $\hat \xi^N = (\hat \xi^N_k)_{ k\geq 1}$, obtained by time-inversion at $N$, i.e. defined, by $\hat \xi^N_k = \xi_{N-k+1}$ for $ 1 \leq k \leq N$, and by $\hat \xi^N_k = \xi_k$ for $N+1\leq k$. These two infinite sequences $\xi$ and $\hat \xi^N$ obviously have the same distribution. So that the two random walks $(S_n)_{n \geq 1}$ and $(\hat S_n)_{n \geq 1}$, defined as their partial sums $S_n = \sum_{k=1}^n \xi_k$ and $\hat S_n = \sum_{k=1}^n \hat \xi_k$, also have the same distribution.

Noting that
\begin{equation}
\hat U_N =  \sum_{\ell=1}^{N-1} e^{\hat S_{\ell}},
\end{equation}
it is clear that the two random variables $U_N$ and $\hat U_N$ have the same distribution. 
Moreover, it is clear that $ U_N \leq U_\infty=\sum_{k=1}^\infty e^{S_k}$ and $ \hat U_N \leq \hat U_\infty=\sum_{k=1}^\infty e^{\hat S_k}$, where these two random variables $ U_\infty$ and $\hat U_\infty$ have the same distributions. Thus,
\begin{equation}
\rho^{(1)}_N(D) \leq  \frac{1}{c_0(D)} ( 1 + U_\infty)(1+\hat U_\infty)
\end{equation}

This proves our second item. 

We now prove the last item.
Recall that

\begin{equation}
\rho^{(2)}_N(D) = ||B(D)||_{HS}^2= \sum_{1\leq k\leq m-1 \leq N} |\beta_{k+1,m-1}|^2
\end{equation}

Obviously,

\begin{equation}
\rho^{(2)}_N(D)  \leq \sum_{1\leq k\leq m-1 \leq N} e^{(S_{m-1} - S_k)} \leq T_N
\end{equation}

which proves our last item.
\end{proof}

By Theorem~\ref{LBrho} and Theorem~\ref{UBgamma}, we have now proved the following bounds for $s_{min}(D+U_N)$

\begin{equation}\label{boundsRW}
(2 c_0(D)^{-1}(1+U_\infty)(1+\hat U_\infty) + 2T_N)^{-1} \leq s_{min}^2(D+U_N) \leq c_0(D) e^{-M_N}
\end{equation}

and thus proved Theorem~\ref{ThmboundsRW}.
\end{proof}

We now prove Theorem~\ref{NegativeDrift1} and Theorem~\ref{NegativeDrift2}
\begin{proof}[Proof of Theorem~\ref{NegativeDrift1}]

Let us define the random variable $X_N$ by
\begin{equation}
X_N= c_0(D)^{-1}(1+U_\infty)(1+\hat U_\infty)
\end{equation}

We want to estimate the lower tail of $s_{min}(A)$ for a general permutation $\sigma$. We will of course begin by the analogous result for the case of a single-cycle permutation, i.e. an estimate for the lower tail of $s_{min}(D+U_N)$.

From Theorem~\ref{ThmboundsRW}, we know that

\begin{equation}\label{boundsRW2}
(2X_N + 2T_N)^{-1} \leq s_{min}^2(D+U_N)
\end{equation}

When $\theta <1$, the following bound on the tail of $T_N$ is given in Theorem~\ref{T_N2}, for any $u \leq cN^{\frac{1}{\theta}}$
\begin{equation}
\bbP [ T_N  \geq u] \leq CN \frac{(\ln u)^{\theta}}{u^{\theta}}     
\end{equation}

The following tail estimate is also given in the Appendix, see Lemma~\ref{X_N},
\begin{equation}
\bbP[ X_N \geq u] \leq \frac{C}{u^{\delta}}   e^{-\frac{\delta}{\gamma}kN}
\end{equation}

So that, a simple union bound yields

\begin{equation}
\bbP[s_{min}^2(D+U_N) \leq t] \leq \bbP[X_N \geq \frac{1}{4t}] + \bbP[T_N  \geq \frac{1}{4t}]
\end{equation}
and 
\begin{equation}
\bbP[s_{min}^2(D+U_N) \leq t] \leq Ct^{\delta} e^{-\frac{\delta}{\gamma}kN}+ CN t^{\theta} (\ln \frac{1}{t})^{\theta} 
\end{equation}

We have thus proved the first item of Theorem~\ref{NegativeDrift2} in the case of a single-cycle permutation.
In order to get to the case of a general permutation matrix, now we simply use again the fact that 

\begin{equation}
s_{min}^2(A) = \min_{1 \leq i \leq K(\sigma)} s_{min}^2(D_i + U_{N_i})
\end{equation}

and a union bound to obtain

\begin{equation}
\bbP[s_{min}^2(A) \leq t] \leq \sum_{1 \leq i \leq K(\sigma)} \bbP[s_{min}^2(D_i+U_{N_i}) \leq t]
\end{equation}

Now, by the estimate above for the single-cycle case, we get that

\begin{equation}
\bbP[s_{min}^2(A) \leq t] \leq Ct^{\delta} \sum_{1 \leq i \leq K(\sigma)} e^{-\frac{\delta}{\gamma}kN_i} + C t^{\theta} (\ln \frac{1}{t})^{\theta}\sum_{1 \leq i \leq K(\sigma)}N_i 
\end{equation}

This proves the first item of Theorem~\ref{NegativeDrift2}, for a general permutation.
The second item is then a very simple consequence. Indeed, choosing $t = \frac{u}{N^{\frac{1}{\theta}}}$ in this estimate, yields
\begin{equation}
\limsup_{ N \to \infty} \bbP_D [ s^2_{min}(D+U_N) \leq \frac{u}{N^{\frac{1}{\theta}}}] \leq C u^{\theta}(\ln \frac{1}{u})^{\theta}
\end{equation}
The sequence of distributions of the random variables $ (N^{\frac{1}{\theta}} s^2_{min}(D+U_N))^{-1} $ is thus tight.
This concludes the proof of Theorem~\ref{NegativeDrift1}.

\end{proof}

We now prove Theorem~\ref{NegativeDrift2}
\begin{proof}[Proof of Theorem~\ref{NegativeDrift2}]
Here we assume $\theta >1$. The proof is totally parallel to the one we just gave, using the following tail estimate for $T_N$.
For $ u \geq C N \log N$, the bound from Theorem~\ref{T_N2} shows that

\begin{equation}
\bbP [ T_N  \geq u] \leq CN \frac{(\ln u)^{\theta}}{u^{\theta}}     
\end{equation}

we see that, for $ u \geq C N \log N$,

\begin{equation}
\bbP[s_{min}^2(D+U_N) \leq t] \leq C t^{\gamma}+ CN t^{\theta} (\ln \frac{1}{t})^{\theta} 
\end{equation}

Thus we have proved the first statement of Theorem~\ref{NegativeDrift2}.

Choosing $t= \frac{u}{N \ln N}$ we have

\begin{equation}
\limsup_{ N \to \infty} \bbP_D [ s^2_{min}(D+U_N) \leq \frac{u}{N \ln N}] \leq C \frac{u^{\theta}}{N^{\theta-1}}
\end{equation}
This shows that the sequence of random variables $ (s^2_{min}(D+U_N)N \ln N)^{-1}$ converges to zero in probability (or even a.s if $\theta >2$). This concludes the proof  of Theorem~\ref{NegativeDrift3}.
\end{proof}

\subsection{Proof of Theorem~\ref{NegativeDrift3}}\label{negd3}
We can now prove Theorem~\ref{NegativeDrift3}, i.e. give upper bounds for $s_{min}(A)$.

\begin{proof}[Proof of Theorem~\ref{NegativeDrift3}]

The proof of this theorem is a consequence of Theorem~\ref{boundsRW} and classical results about the asymptotic behavior of the random variable $M_N$, recalled in the Appendix.
We begin, naturally, with the case of a single-cycle permutation ad give upper-tail estimates for $s^2_{min}(D+U_N)$.
\begin{theorem}\label{uppertailSN}
There exists constants $k>0$ and $C>0$, such that, for every $t>0$
\begin{equation}
\bbP_D[ s_{min}(D+U_N) \leq t] \leq e^{-kN} + e^{-CNt^{\theta}}
\end{equation}
\end{theorem}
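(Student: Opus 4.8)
The plan is to feed the deterministic upper bound of Theorem~\ref{ThmboundsRW} into the classical fluctuation theory for negative-drift random walks. Note first that the content of the statement is the upper-tail bound $\bbP_D[s^2_{\min}(D+U_N)\ge t]\le e^{-kN}+e^{-CNt^{\theta}}$ (this is the single-cycle case of Theorem~\ref{NegativeDrift3}; for general $\sigma$ it follows from $s^2_{\min}(A)=\min_{i}s^2_{\min}(D_i+U_{N_i})$ by keeping only the longest cycle). By Theorem~\ref{ThmboundsRW},
\[
s^2_{\min}(D+U_N)\ \le\ c_0(D)\,e^{-M_N},
\]
where $M_N=\max_{1\le k\le m\le N}(S_m-S_k)$ is a functional of the negative-drift walk $S_n=\sum_{i\le n}\xi_i$ with $\xi_i=2\log|d_i|$. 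Hence $\{s^2_{\min}(D+U_N)\ge t\}\subseteq\{c_0(D)\ge 4\}\cup\{M_N\le\log(4/t)\}$, and I would control the two events separately.

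For the first event I would use $c_0(D)=|(-1)^N-\prod_{\ell=1}^{N}d_\ell|^2\le(1+e^{S_N/2})^2$, so that $\{c_0(D)\ge 4\}\subseteq\{S_N>0\}$; since $\mathbb{E}[\xi_i]=2m<0$ by H3 and the moment generating function of $\xi_i$ is finite in a right-neighbourhood of the origin by H2 (and $\xi_i$ is bounded below, by H1), Cram\'er's bound gives $\bbP[S_N>0]\le e^{-k_1N}$. The problem thus reduces to the ``staying low'' estimate
\[
\bbP[M_N\le x]\ \le\ e^{-k_2N}+e^{-c\,N e^{-\theta x}},\qquad x:=\log(4/t),
\]
since substituting $e^{-\theta x}=(t/4)^{\theta}$ and combining the exponential terms yields $e^{-kN}+e^{-CNt^{\theta}}$ after adjusting constants.

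To prove this, I would write $M_N=\max_{0\le m\le N}W_m$, where $W_m=\max(W_{m-1}+\xi_m,0)$ is the reflected (Lindley) walk started at $0$, and exploit its regeneration at the successive returns to $0$, which splits $(W_m)$ into i.i.d.\ excursions. By negative drift and H1--H2 the excursion length has an exponentially small tail, hence a finite mean $\ell_0$, and a Chernoff bound shows that at least $n_0:=\lceil N/(2\ell_0)\rceil$ excursions are completed by time $N$ outside an event of probability $\le e^{-k_2N}$; on the complement, $\{M_N\le x\}$ forces each of the first $n_0$ excursions to stay below $x$, so, since an excursion reaches level $x$ with probability $q_x$ independently across excursions,
\[
\bbP[M_N\le x]\ \le\ (1-q_x)^{n_0}+e^{-k_2N}\ \le\ e^{-q_x N/(2\ell_0)}+e^{-k_2N}.
\]
The remaining ingredient is the Cram\'er--Lundberg lower bound $q_x\ge c\,e^{-\theta x}$, with $\theta$ the Cram\'er root, i.e.\ $\mathbb{E}[e^{\theta\xi}]=\int|x|^{2\theta}\,d\mu(x)=1$ (which exists and is finite under H1--H3).

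The routine parts are the estimate on $\{c_0(D)\ge 4\}$, the excursion decomposition, the Chernoff estimates, and the passage to general $\sigma$. The hard part will be the lower bound $q_x\ge c\,e^{-\theta x}$ on the probability that a single excursion of the reflected walk overshoots the level $x$: this rests on the exponential change of measure tilting the law of $\xi$ by $e^{\theta\xi}$ — under which the increments form an honest random walk of strictly positive finite mean (boundedness below of $\xi$ from H1, integrability of $\xi e^{\theta\xi}$ from H2) — followed by a uniform-in-$x$ control of the overshoot at level $x$. These are precisely the classical fluctuation-theory facts to be assembled in Appendix~\ref{appendix}.
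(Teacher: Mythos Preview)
Your proposal is correct and follows essentially the same route as the paper. You correctly identify that the inequality in the statement should read $s^2_{\min}(D+U_N)\ge t$ (upper tail), feed in the deterministic bound $s^2_{\min}\le c_0(D)e^{-M_N}$ from Theorem~\ref{ThmboundsRW}, split via a union bound into a large-deviation estimate for $c_0(D)$ and the staying-low estimate $\bbP[M_N\le x]$, and handle the latter by regeneration into i.i.d.\ excursions together with the Cram\'er--Lundberg lower bound $q_x\ge c\,e^{-\theta x}$; this is exactly the argument behind the paper's estimate~\eqref{boundMN}, which invokes the ladder-epoch decomposition and the tail of $V=\max_{1\le n\le K_1}S_n$ from Iglehart (your Lindley-walk excursions are the same objects in different notation).
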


\begin{proof}

We have seen that
\begin{equation}
s_{min}(D+U_N) \leq c_0(D) e^{-M_N}
\end{equation}

By a simple union bound
\begin{equation}
\bbP_D[ s_{min}(D+U_N) \leq 2t] \leq \bbP_D[ c_0(D)  \geq 2] + \bbP_D[ M_N \leq -\log t]
\end{equation}

A trivial large deviation bound shows that
\begin{equation}
\bbP_D[ c_0(D)  \geq 2] \leq e^{-k_1N}
\end{equation}
But, using the estimate~\ref{boundMN}, we see that

\begin{equation}
\bbP_D[ s_{min}(D+U_N) \leq 2t] \leq e^{-k_1N} + e^{-k_2N} + e^{-CNt^{\theta}}
\end{equation}
and thus if $0<t<T$
\begin{equation}
\bbP_D[ s_{min}(D+U_N) \leq 2t] \leq C(T) e^{-CNt^{\theta}}
\end{equation}
Which proves the first item of Theorem~\ref{NegativeDrift3} in the single-cycle case.
But obviously, for a general permutation, we have
\begin{equation}
\bbP_D[s^2_{min}(A) \leq 2t] \leq \prod_{i=1}^{K(\sigma)} \bbP_D[ s_{min}(D_i+U_{N_i}) \leq 2t] \leq C(T) e^{-C\sum_{i=1}^{K(\sigma})N_i t^{\theta}}
=C(T) e^{-CN t^{\theta}}
\end{equation}
which proves the first item of the Theorem for the general case.
The second item is a direct consequence of the first.
\end{proof}

\end{proof}

\appendix
\section{Random Walks with negative drifts}\label{appendix}
Let $(\xi_i)_{i\geq1}$ be a sequence of i.i.d. random variables, with common distribution $\nu$.
We will assume that 
\begin{enumerate}
\item[C1.]
$\nu$ has exponential moments, i.e. exists a $B>0$ such that, $\E{e^{t\xi}}<\infty$ for every $ t \in [0,B]$,
\item[C2.]
$\E{\xi}=m<0$ and
\item[C3.]
$\nu$ is non-lattice.
\end{enumerate} 

We will denote by $\theta$ the unique positive number $\theta$ such that $\E{e^{\theta \xi}}= 1$.

Let 
\begin{equation}
S_N=\sum_{i=1}^N\xi_i, \quad S_0=0.
\end{equation}

We are interested in the asymptotic behavior of the two random variables

\begin{equation}
M_N=\max_{1\leq k \leq  m \leq N}(S_m-S_k).
\end{equation}
 and 
\begin{equation}
T_N=\sum_{1\leq k \leq m \leq N}e^{S_m-S_k}.
\end{equation}

The asymptotic behavior of $M_N$ is well understood (see \cite{karlin_dembo}, Theorem A, p. 115, for discussion, references and an extension to the lattice case, as well as an extension to Markov Chains).

\begin{theorem}\label{M_N} 
Under the assumptions C1-C3 above, as $N$ goes to infinity,
\begin{enumerate}
\item  $\frac{M_N}{\log N}$ converges almost surely to $\frac{1}{\theta}$ and
\item  $M_N-\frac{\log N}{\theta} $ converges in distribution to a Gumbel variable
\begin{equation}
\lim_{N\rightarrow\infty}\bbP[M_N-\frac{\log N}{\theta}\leq x]=\exp(-C e^{-\theta x})
\end{equation}
\end{enumerate}
\end{theorem}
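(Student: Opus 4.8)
The plan is to reduce the study of $M_N$ to a classical extreme-value problem for the running maximum of a positive-recurrent Markov chain, and then to combine regeneration theory with the Cram\'er--Lundberg estimate. The starting point is Lindley's identity: setting $W_0=0$ and $W_{n+1}=(W_n+\xi_{n+1})^+$, one has $W_n=\max_{0\le k\le n}(S_n-S_k)$, hence
\begin{equation}
M_N=\max_{0\le n\le N}W_n .
\end{equation}
Since $\E{\xi}=m<0$, the chain $(W_n)$ is positive recurrent (it is the waiting-time chain of a stable $G/G/1$ queue), it returns to $0$ infinitely often, and its stationary law is that of $W_\infty=\sup_{n\ge0}S_n$. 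Assumptions C1 and C3 place us exactly in the Cram\'er--Lundberg regime: performing the exponential change of measure $\d\tilde\nu(x)=e^{\theta x}\d\nu(x)$ (legitimate since $\E{e^{\theta\xi}}=1$ and $\theta\in(0,B)$), under which the increments acquire positive mean, the non-lattice key renewal theorem yields
\begin{equation}
\bbP[W_\infty>x]\sim C_0\,e^{-\theta x},\qquad x\to\infty,
\end{equation}
for an explicit constant $C_0>0$.

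Next I would install the regeneration structure. Let $\tau_0=0$ and $\tau_{j+1}=\inf\{n>\tau_j:\ W_n=0\}$; by negative drift together with C1 the inter-regeneration time $\tau:=\tau_1$ has a finite exponential moment, $\E{e^{\varepsilon\tau}}<\infty$ for some $\varepsilon>0$. The excursion maxima $E_j:=\max_{\tau_{j-1}\le n<\tau_j}W_n$ are i.i.d.\ for $j\ge1$, and the stationary tail estimate transfers, through the renewal identity $\bbP[W_\infty>x]=\E{\#\{n<\tau_1:\ W_n>x\}}/\E{\tau}$ and the $O(1)$ expected time an excursion spends above a high level, to
\begin{equation}
\bbP[E_1>x]\sim C_1\,e^{-\theta x},\qquad x\to\infty .
\end{equation}
Writing $\nu(N)=\max\{j:\ \tau_j\le N\}$ for the number of completed excursions, the renewal SLLN gives $\nu(N)/N\to1/\E{\tau}$ a.s., while the final incomplete excursion contributes a single term of order $O_{\bbP}(1)$ with an exponential tail and is therefore negligible at the scale $\log N$. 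Thus $M_N$ coincides, up to a negligible error, with $\max_{1\le j\le\nu(N)}E_j$.

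It then remains to invoke the classical extreme-value theory for i.i.d.\ variables with a pure exponential tail: if $E_1,\dots,E_n$ are i.i.d.\ with $\bbP[E_1>x]\sim C_1e^{-\theta x}$, then $\theta\max_{j\le n}E_j-\log n$ converges in distribution to $\log C_1$ plus a standard Gumbel variable, and $\max_{j\le n}E_j/\log n\to1/\theta$ a.s.\ (the latter by Borel--Cantelli, using the exponential tail). Substituting $n=\nu(N)$, using $\log\nu(N)=\log N-\log\E{\tau}+o(1)$ a.s., and replacing the random index by its deterministic asymptotics via Slutsky (sandwiching $\nu(N)$ between $(1\pm\varepsilon)N/\E{\tau}$ on an event of probability $1-o(1)$, respectively eventually almost surely) yields both assertions, with the Gumbel constant $C$ in the theorem absorbing $C_1$ and $\E{\tau}$.

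The main obstacle is the sharp exponential asymptotics \emph{with the correct constant}, i.e.\ upgrading $\bbP[E_1>x]\asymp e^{-\theta x}$ to $\sim C_1e^{-\theta x}$: this is precisely where the non-lattice hypothesis C3 is genuinely used (in the lattice case one only obtains the estimate along an arithmetic progression, with a periodic correction, as in the extension recalled in \cite{karlin_dembo}), and it requires the Blackwell/key renewal theorem under the $\theta$-tilted law rather than a soft subadditivity argument, together with control of the overshoot distribution. A secondary technical point is the dependence between $\nu(N)$ and the sequence $(E_j)$, which is handled by the sandwiching above rather than by any exact independence claim. Alternatively, one may bypass the regeneration bookkeeping by a direct Chen--Stein/Poisson approximation: after tilting, the high-scoring segments $\{(k,m):\ S_m-S_k>\theta^{-1}\log N+x\}$ form an asymptotically Poissonian family with intensity converging to $Ce^{-\theta x}$, which produces the Gumbel limit directly and is closer to the segmental-score method of \cite{karlin_dembo}.
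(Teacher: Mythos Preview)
The paper does not supply its own proof of this theorem: it is quoted as a known result, with a pointer to \cite{karlin_dembo} (Theorem~A) and the underlying queueing result of \cite{iglehart1972}. Your plan is correct and is, in outline, exactly the route taken in those references: Lindley's recursion identifies $M_N$ (up to an $O(1)$ boundary term coming from whether $k=0$ is allowed) with the running maximum of the reflected walk $W_n$; regeneration at the ladder epochs produces i.i.d.\ excursion maxima whose tail is governed by the Cram\'er--Lundberg asymptotics (this is precisely the content of \cite{iglehart1972}, and is restated in the paper as Lemma~\ref{boundV}); classical extreme-value theory for i.i.d.\ variables with a pure exponential tail then yields both the a.s.\ limit and the Gumbel law, with the random index $\nu(N)$ handled by the renewal SLLN and a sandwiching argument. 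The one place where care is genuinely needed, and which you correctly flag, is upgrading the tail of the excursion maximum from $\asymp e^{-\theta x}$ to $\sim C_1 e^{-\theta x}$; this is exactly where the non-lattice hypothesis C3 enters, via the key renewal theorem under the $\theta$-tilt, and is the substance of Iglehart's argument. So your proposal matches the literature the paper defers to; there is nothing to compare against in the paper itself.
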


\begin{remark}
The value of the constant $C = C(\nu)$ is complicated as a function of the distribution $\nu$, but it is discussed in \cite{karlin_dembo}. 
\end{remark}

We need a more uniform estimate; There exists a constant $k_2>0$, such that
\begin{equation}\label{boundMN}
\bbP[ M_N \leq v] \leq e^{-k_2N} + e^{-NCe^{-\theta v}}
\end{equation}
This is a direct consequence (see \cite{iglehart1972}) of the fact that 
\begin{equation}
M_N  \leq \max_{1\leq k \leq i(N)} V_k
\end{equation}
where the random variables $V_k$ are i.i.d and have the same law as the variable $V$ defined by

\begin{equation}
V=\max_{1\leq n\leq K_1}S_n.
\end{equation}  

We will now prove an asymptotic estimate for the tail of the random variable $T_N$.

\begin{theorem}\label{T_N2} 
Under the assumptions C1-C3 above
\begin{enumerate}

\item If $\theta < 1$, then
\begin{equation}
\bbP [ T_N  \geq u] \leq CN \frac{(\ln u)^{\theta}}{u^{\theta}}     
\end{equation}
As a consequence the random variable $\frac{T_N}{N^{\frac{1}{\theta}}\ln N}$ is bounded in probability.
\begin{equation}
\bbP\Big[\frac{T_N}{N^{\frac{1}{\theta}}\ln N}  \geq v\Big] \leq \frac{C}{v^{\theta}}
\end{equation}

\item If $\theta >1 $, then there exists a $C_0>0$ such that, for $u > C_0 N \ln N$,

\begin{equation}
\bbP[T_N \geq u] \leq \frac{CN (\ln u)^{\theta}}{u^{\theta}}
\end{equation}
As a consequence the random variable $\frac{T_N}{N\ln N}$ converges to zero in probability.
\begin{equation}
\bbP\Big[\frac{T_N}{N\ln N} \geq v\Big] \leq \frac{C}{N^{\theta -1} v^{\theta}}
\end{equation}

\end{enumerate}
\end{theorem}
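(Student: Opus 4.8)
The plan is to control $T_N=\sum_{1\le k\le m\le N}e^{S_m-S_k}$ by decomposing the trajectory $(S_n)_{0\le n\le N}$ along its strict descending ladder epochs $0=\tau_0<\tau_1<\tau_2<\cdots$, and reducing the tail of $T_N$ to that of a sum of $O(N)$ nearly independent ``one‑excursion'' functionals, each with tail of order $(\ln v)^{\theta}v^{-\theta}$. By C1--C2, $S_n\to-\infty$ and, by Cramér's theorem, the cycle lengths $L_i=\tau_i-\tau_{i-1}$ and the ladder decrements $H_i=S_{\tau_{i-1}}-S_{\tau_i}>0$ are i.i.d.\ with exponential moments, while inside each cycle $[\tau_{i-1},\tau_i)$ the path stays above its starting level $S_{\tau_{i-1}}$. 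Writing $V_i=\max_{\tau_{i-1}\le n<\tau_i}(S_n-S_{\tau_{i-1}})$ for the $i$-th excursion height, and, for each $m$ in cycle $i$, factoring $e^{S_m-S_k}=e^{S_m-S_{\tau_{i-1}}}e^{-(S_k-S_{\tau_{i-1}})}$, bounding $e^{S_m-S_{\tau_{i-1}}}\le e^{V_i}$, and summing the remaining geometric‑in‑$H$ series over the earlier cycles, one obtains
\begin{equation}
T_N\ \le\ \sum_{i\,=\,1}^{\#\{\text{cycles in }[0,N]\}} B_i,\qquad B_i:=L_i\,e^{V_i}\,G_i,
\end{equation}
where $G_i=\Phi_i^-+e^{-H_{i-1}}G_{i-1}$ is a positive ``perpetuity‑type'' Markov chain ($\Phi_i^-\le L_i$ a light‑tailed cycle functional), hence with all moments finite and a stationary tail lighter than any power. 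Since each cycle has length $\ge1$, the number of cycles is $\le N$ (and is $\le C_1 N$ off an event of exponentially small probability).

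Next I would estimate the tail of a single block. The crucial point is that $n\mapsto e^{\theta S_n}$ is a nonnegative martingale — this is exactly the meaning of the defining relation $\E{e^{\theta\xi}}=1$ — so Doob's maximal inequality gives $\bbP[\sup_{n\ge0}S_n\ge x]\le e^{-\theta x}$, whence $\bbP[e^{V_1}\ge v]\le v^{-\theta}$. Multiplying $e^{V_1}$ by the light‑tailed factors $L_1$ and $G_1$ costs only a logarithmic factor: splitting $\{L_1e^{V_1}G_1>v\}$ into $\{V_1>\ln v-\ln\ln v-C\}$ and $\{L_1G_1>\ln v\}$ and using that the latter has super‑polynomially small probability yields
\begin{equation}
\bbP[B_1\ge v]\ \le\ C\,(\ln v)^{\theta}\,v^{-\theta}.
\end{equation}
(Equivalently one can run the tilted martingale $e^{\theta(\eta)(S_n+\eta n)}$, where $\theta(\eta)\uparrow\theta$ as $\eta\downarrow0$, and optimize over $\eta\sim(\ln v)^{-1}$.) This is precisely where the logarithm is genuinely lost; removing it would require a sharp joint control of excursion length and height, which explains the remark in the text.

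Then I would sum the blocks. The $B_i$ are not independent, but they are a function of the regenerative sequence of cycles, so the tail of $\sum_{i\le N}B_i$ behaves like that of a sum of $\asymp N$ i.i.d.\ copies of $B_1$ (rigorously: sub‑regenerate $G_i$ inside its own contraction cycles, or use a Markov‑chain concentration estimate). For $\theta<1$ I would invoke the single‑big‑jump bound
\begin{equation}
\bbP\Big[\sum_{i\le N}B_i\ge u\Big]\ \le\ \bbP\big[\textstyle\max_i B_i\ge u/2\big]+\bbP\Big[\sum_i B_i\mathbbm{1}_{B_i<u/2}\ge u/2\Big],
\end{equation}
the first term being $\le CN(\ln u)^{\theta}u^{-\theta}$ by a union bound and the second, by Markov's inequality together with $\E{B_1\mathbbm{1}_{B_1<u/2}}\le C(\ln u)^{\theta}u^{1-\theta}$ (finite since $\theta<1$), also $\le CN(\ln u)^{\theta}u^{-\theta}$. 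This proves item (1); the ``consequence'' follows by setting $u=v\,N^{1/\theta}\ln N$ and simplifying the logarithms.

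For $\theta>1$ the block mean is finite, so $\E{\sum_{i\le N}B_i}\asymp N$ and an estimate of the stated form can only hold far out in the tail; for $u>C_0N\ln N$ the truncated sum $\sum_iB_i\mathbbm{1}_{B_i<u/2}$ is, by a Bernstein‑type bound at truncation level $u$ with mean $\asymp N$, below $u/2$ except with probability negligible against $N(\ln u)^{\theta}u^{-\theta}$, so the single big jump dominates and one again gets $\bbP[T_N\ge u]\le CN(\ln u)^{\theta}u^{-\theta}$; the ``consequence'' follows by setting $u=vN\ln N$. The main obstacle throughout is the interplay of two difficulties — the dependence of the $B_i$ (handled by the regenerative/Markov structure) and the unboundedness of the excursion lengths (which is exactly what forces the $(\ln u)^{\theta}$ loss) — and making the ``sum of $\asymp N$ i.i.d.-like blocks'' reduction precise is the technical heart of the argument.
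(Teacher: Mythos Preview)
Your ladder-epoch decomposition and the identification of the $(\ln u)^{\theta}$ loss with the excursion lengths are exactly right, and your Doob argument for $\bbP[e^{V_1}\ge v]\le v^{-\theta}$ is a clean way to get the per-excursion tail (the paper instead cites Iglehart and Kesten--Goldie for the analogous estimates on $V$ and $U_\infty$). The route, however, differs from the paper's in one structural respect that is worth noting. You keep the cycle length $L_i$ inside each block $B_i=L_ie^{V_i}G_i$, which forces the blocks to carry the logarithmic correction in their individual tail and, via the perpetuity $G_i$, to be genuinely dependent; you then have to argue a single-big-jump bound for a dependent sum, which you correctly flag as the ``technical heart''. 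The paper sidesteps this entirely by factoring out the \emph{maximum} cycle length $R_{i(N)}=\max_i L_i$ as a single global multiplier:
\[
T_N\ \le\ K(c)\,R_{i(N)}\,\sum_{i=1}^{i(N)} U_i,\qquad U_i=\sum_{K_{i-1}\le \ell<K_i} e^{S_\ell-S_{K_{i-1}}},
\]
so that the $U_i$ are \emph{genuinely i.i.d.}\ with clean tail $\bbP[U_1\ge t]\le C t^{-\theta}$. One then splits $\{T_N\ge u\}\subset\{R_{i(N)}\ge s\ln u\}\cup\{\sum_i U_i\ge u/(sK(c)\ln u)\}$; the first event has probability $\le CN u^{-s\ln\lambda}$ by the exponential tail of $K_1$, and the second is handled directly by Borovkov's large-deviation bounds for i.i.d.\ sums with regularly varying tails, the logarithm entering only through the rescaling $u\mapsto u/\ln u$. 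This buys exactly the independence you were missing: no regenerative or Markov-chain concentration argument is needed. Your approach would work with more effort, but the global-$R$ factorization is the simpler device here.
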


The proof of Theorem~\ref{T_N2} is rather involved. 

The first step is to introduce the following classical excursion decomposition of the path of the random walk $S_n$.
For any $c \geq 0$, consider the ladder epochs $(K_i(c))_{i\geq1}$ defined by 
\begin{eqnarray}
K_1&=&\min(n\geq1, S_n\leq -c)\\
K_2&=&\min(n\geq K_1+1, S_n-S_{K_1}\leq -c)\\
K_i&=&\min(n\geq K_{i-1}+1, S_n-S_{K_{i-1}}\leq -c)
\end{eqnarray}

And let 
\begin{equation}
U_i=\sum_{K_{i-1} \leq \ell<K_i}e^{S_\ell-S_{K_{i-1}}}.
\end{equation}
as well as the maximal length of the first $m$ excursions
\begin{equation}
R_m=\max_{1\leq i\leq m}(K_i-K_{i-1}).
\end{equation}
Obviously the random variables $U_i$ are i.i.d. (we recall that their common distribution depends on the parameter $c>0$)
We will first bound $T_N$ by a sum of these i.i.d random variables.

Let us denote by $i(\ell)$ the index of the excursion straddling $\ell$, for $1\leq\ell\leq N$, i.e.
\begin{equation}
K_{i(\ell)-1}\leq\ell < K_{i(\ell)}
\end{equation}
with the convention $K_0=0$.
Obviously $ i(\ell) \leq \ell$.

Then, we have the following upper bound for the random variable $T_N$. This is certainly a sub-optimal bound, where we lose a logarithmic term. But improving on this bound would be really too heavy here.

\begin{lemma}\label{boundT_N}
For $c>0$, there exists a constant $K(c)$ such that
\begin{equation}
T_N\leq K(c) R_{i(N)}\sum_{i=1}^{i(N)}U_i 
\end{equation}
\end{lemma}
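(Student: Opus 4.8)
The plan is to split the double sum $T_N = \sum_{1 \leq k \leq m \leq N} e^{S_m - S_k}$ according to which excursions straddle the indices $k$ and $m$, and to bound each piece by a product of two of the i.i.d.\ excursion sums $U_i$, losing only a factor that counts the number of excursions (hence the $R_{i(N)}$, actually a factor bounded by $i(N)$, and one extra factor of $R_{i(N)}$ coming from the within-excursion summation). First I would write $S_m - S_k = (S_m - S_{K_{i(m)-1}}) + (S_{K_{i(m)-1}} - S_{K_{i(k)}}) + (S_{K_{i(k)}} - S_k)$. The middle term is a telescoping sum over complete excursions; since each complete excursion ends at least $c$ below where it started, $S_{K_{i(m)-1}} - S_{K_{i(k)}} \leq -c\,(i(m)-1-i(k)) \leq 0$, so $e^{S_{K_{i(m)-1}} - S_{K_{i(k)}}} \leq 1$. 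The third term satisfies $e^{S_{K_{i(k)}} - S_k} \leq 1$ as well, since within the excursion straddling $k$, the running minimum of $S_\ell - S_{K_{i(k)-1}}$ has not yet dropped to $-c$ at time $k$, but in any case $S_{K_{i(k)}} \leq S_{K_{i(k)-1}}$ (the endpoint of the excursion is a new minimum) while $S_k \geq S_{K_{i(k)-1}} - c$ is not automatic; I would instead bound this factor crudely by $e^{S_{K_{i(k)}} - S_{K_{i(k)-1}}} \cdot e^{S_{K_{i(k)-1}} - S_k}$, note the first is $\leq 1$, and absorb the second into the definition of $U_{i(k)}$ below. Cleaner: regroup as $e^{S_m - S_k} = e^{S_m - S_{K_{i(m)-1}}} \cdot e^{S_{K_{i(m)-1}} - S_{K_{i(k)-1}}} \cdot e^{S_{K_{i(k)-1}} - S_k}$, with the middle factor $\leq 1$ by the ladder property.

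Next I would sum over $k$ and $m$. Fixing the pair of excursion indices $(i,j) = (i(k), i(m))$ with $i \leq j$, the indices $k$ with $i(k) = i$ range over $K_{i-1} \leq k < K_i$, and similarly for $m$; so
\begin{equation}
T_N \leq \sum_{1 \leq i \leq j \leq i(N)} \Big( \sum_{K_{i-1} \leq k < K_i} e^{S_{K_{i-1}} - S_k} \Big) \Big( \sum_{K_{j-1} \leq m < K_j} e^{S_m - S_{K_{j-1}}} \Big),
\end{equation}
up to handling the excursion straddling $N$ (truncate at $N$, which only decreases the sum) and a boundary term from $m = N$ or the last incomplete excursion, which contributes a bounded multiplicative constant $K(c)$. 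Now $\sum_{K_{j-1} \leq m < K_j} e^{S_m - S_{K_{j-1}}} = U_j$ by definition, and $\sum_{K_{i-1} \leq k < K_i} e^{S_{K_{i-1}} - S_k}$ is a sum of at most $K_i - K_{i-1} \leq R_{i(N)}$ terms each bounded by $e^{c}$ (since $S_\ell - S_{K_{i-1}} > -c$ for $K_{i-1} \leq \ell < K_i$), hence this first factor is at most $e^c R_{i(N)}$.

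Therefore $T_N \leq e^c R_{i(N)} \sum_{1 \leq i \leq j \leq i(N)} U_j \leq e^c R_{i(N)} \cdot i(N) \cdot \sum_{j=1}^{i(N)} U_j$, and since $i(N) \leq R_{i(N)}$ is false in general but $i(N) \leq N$ and — more to the point — one can also absorb the count of $i$'s into a constant only if one is more careful; the cleanest route is to note $\sum_{1\leq i \leq j} 1 = j \leq i(N) \leq K_{i(N)} \leq$ (nothing clean), so I would instead bound $\sum_{i \leq j} U_j = \sum_j j\,U_j \leq i(N)\sum_j U_j$ and accept $i(N)$; but the statement wants $R_{i(N)}\sum U_i$, so I would bound $i(N) \leq R_{i(N)} + (\text{const})$? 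No — rather, reexamine: one factor of $R_{i(N)}$ already came from the inner sum over $k$; the remaining sum $\sum_{i\le j}U_j$ should be handled by swapping to $\sum_j U_j \cdot \#\{i \le j\}$, and here I would instead bound $e^{S_{K_{i-1}}-S_k}$ summed over \emph{all} complete excursions $i \le j$ at once: $\sum_{i \le j}\sum_{K_{i-1}\le k < K_i} e^{S_{K_{i-1}}-S_k} \le \sum_{i\le j} e^c(K_i - K_{i-1}) \le e^c K_{i(N)} \le e^c i(N) R_{i(N)}$ — still two factors. The honest statement is $T_N \le K(c)\, R_{i(N)}\, i(N)\, \sum_i U_i$, and since $i(N)\le R_{i(N)}$ fails, I suspect the intended bound absorbs $i(N)$ differently or $R$ is meant to dominate; \textbf{the main obstacle} is precisely getting down to a single power of $R_{i(N)}$ rather than $R_{i(N)}\cdot i(N)$, which I would resolve by noting that in the double sum the ``distance'' $j-i$ between excursions is penalized by $e^{-c(j-i-1)}$ from the discarded middle factor (keeping it rather than bounding by $1$), so $\sum_{i\le j} e^{-c(j-i)} U_j \le (1-e^{-c})^{-1}\sum_j U_j$, which removes the extra factor entirely and yields $T_N \le K(c) R_{i(N)} \sum_{i=1}^{i(N)} U_i$ as claimed.
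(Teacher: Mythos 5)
Your final argument is correct and is essentially the paper's proof rearranged: the paper fixes the outer index $m=\ell$, bounds the inner sum over $k$ by $K(c)R_{i(\ell)}e^{S_\ell-S_{K_{i(\ell)-1}}}$ (splitting into $i(k)<i(\ell)$, where the ladder-telescoping gives the geometric factor $e^{-c(i(\ell)-i(k))}$, and $i(k)=i(\ell)$, where there are at most $R$ terms each $\leq e^c$), and then sums over $\ell$ to produce $\sum_i U_i$; you instead group the double sum by excursion pair $(i,j)$ first, but use the identical three ingredients — the geometric ladder decay $e^{-c(j-i)}$, the within-excursion bound $e^{S_{K_{i-1}}-S_k}\leq e^c$ with the count $\leq R_{i(N)}$, and $\sum_{m:\,i(m)=j}e^{S_m-S_{K_{j-1}}}\leq U_j$. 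The only thing to tighten is the exposition: the detour through bounding the middle factor by $1$ (which loses an $i(N)$) should be cut, and one should state up front that keeping $e^{-c(j-i)}$ makes $\sum_{i\leq j}e^{-c(j-i)}\leq(1-e^{-c})^{-1}$ so the sum over excursion pairs collapses.
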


\begin{proof}[Proof of Lemma~\ref{boundT_N}]

Let us first fix an integer $1 \leq \ell \leq N$.
For any $k\leq\ell$, such that $i(k)<i(\ell)$ we write
\begin{eqnarray}
S_\ell-S_k&=&S_\ell-S_{K_{i(\ell)-1}}+S_{K_{i(\ell)-1}}-S_{K_{i(k)}}+S_{K_{i(k)}}-S_k\\
&\leq&S_\ell-S_{K_{i(\ell)-1}}-c(i(\ell)-i(k)),
\end{eqnarray}

so that 
\begin{eqnarray}
\sum_{\substack{k\leq \ell\\ i(k)<i(\ell)}}e^{S_\ell-S_k}&\leq&\left[\sum_{1\leq i\leq i(\ell)-1}e^{-c(i(\ell)-i)}(K_{i+1}-K_i)\right]e^{S_\ell-S_{K_{i(\ell)-1}}}\\
&\leq&R_{i(\ell)}(\sum_{j=1}^\infty e^{-cj}) e^{S_\ell-S_{K_{i(\ell)-1}}}\\
&\leq&\frac{e^{-c}}{1-e^{-c}}R_{i(\ell)}e^{S_\ell-S_{K_{i(\ell)-1}}}
\end{eqnarray}

For $k\leq\ell$, such that $i(k)=i(\ell)$, we write
\begin{equation}
S_k\geq S_{K_{i(\ell)-1}}-c,
\end{equation}
so that
\begin{equation}
S_\ell-S_k\leq S_\ell-S_{K_{i(\ell)-1}}+c
\end{equation}

and
\begin{eqnarray}
\sum_{\substack{k\leq \ell\\ i(k)=i(\ell)}}e^{S_\ell-S_k}&\leq& e^{c}(K_{i(\ell)}-K_{i(\ell)-1})e^{S_\ell-S_{K_{i(\ell)-1}}}\\
&\leq&e^{c}R_{i(\ell)}e^{S_\ell-S_{K_{i(\ell)-1}}}.
\end{eqnarray}

Finally, we get
\begin{equation}
\sum_{k=1}^{\ell}e^{S_\ell-S_k}\leq K(c)R_{i(\ell)}e^{S_\ell-S_{K_{i(\ell)-1}}}
\end{equation}

where
\begin{equation}
K(c) = \frac{e^{-c}}{1-e^{-c}}+e^c
\end{equation}

and
\begin{equation}
T_N=\sum_{\ell=1}^N\sum_{k=1}^{\ell}e^{S_\ell-S_k}\leq K(c) R_{i(N)}\sum_{\ell=1}^Ne^{S_\ell-S_{K_{i(\ell)-1}}}.
\end{equation}

But, using the definition of the random variables $U_i$, we have
\begin{equation}
\sum_{\ell=1}^Ne^{S_\ell-S_{K_{i(\ell)-1}}} \leq \sum_{i=1}^{i(N)}U_i
\end{equation}

and therefore, 
\begin{equation}
T_N\leq K(c)R_{i(N)}\sum_{i=1}^{i(N)}U_i
\end{equation}
which is the bound we needed to prove.
\end{proof}

Lemma~\ref{boundT_N} shows that, in order to control the tail of the random variable $T_N$, it is sufficient to control the tail of the random variable $R_m$ and of the sum of i.i.d random variables $U_i$. We begin with the tail of the random variable $R_N$ and then turn to tail of the distribution of the sum of the $U_i$.

\begin{lemma}\label{R_N}
There exists a $\lambda >1$, such that, for any $y >  1+\lambda^{\frac{2}{3}}$
\begin{equation}
\bbP[ R_N \geq y] \leq C \frac{N}{\lambda^y}
\end{equation}
\end{lemma}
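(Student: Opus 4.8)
The plan is to exploit the renewal structure behind the ladder epochs: the increments $K_i-K_{i-1}$ are i.i.d.\ copies of $K_1$, and $K_1$ has an exponentially decaying tail because the walk has strictly negative drift, so $R_N=\max_{1\le i\le N}(K_i-K_{i-1})$ is a maximum of $N$ i.i.d.\ light-tailed random variables and a union bound finishes the argument. First I would record the tail bound for $K_1$. Under C1 and C2 the map $t\mapsto\E{e^{t\xi}}$ is finite near $0$, equals $1$ at $t=0$, and has right-derivative $m<0$ there, so there is $t_0\in(0,B]$ with $\rho:=\E{e^{t_0\xi}}<1$ (equivalently $t_0<\theta$). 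On the event $\{K_1>k\}$ one has in particular $S_k>-c$, so Markov's inequality applied to $e^{t_0 S_k}$ together with independence of the $\xi_i$ gives
\begin{equation}
\bbP[K_1>k]\ \le\ \bbP[S_k>-c]\ \le\ e^{t_0 c}\,\E{e^{t_0 S_k}}\ =\ e^{t_0 c}\,\rho^{\,k}.
\end{equation}
Setting $\lambda:=1/\rho>1$ this reads $\bbP[K_1>k]\le e^{t_0 c}\,\lambda^{-k}$ for every integer $k\ge0$.

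Next I would check that the increments are genuinely i.i.d. Each $K_{i-1}$ is a stopping time for the walk, so by the strong Markov property the shifted path $(S_{K_{i-1}+j}-S_{K_{i-1}})_{j\ge0}$ is independent of the past and distributed as $(S_j)_{j\ge0}$; hence $K_i-K_{i-1}=\min\{j\ge1:\ S_{K_{i-1}+j}-S_{K_{i-1}}\le -c\}$ has the law of $K_1$ and is independent of $(K_\ell-K_{\ell-1})_{\ell<i}$. Therefore $(K_i-K_{i-1})_{i\ge1}$ are i.i.d.\ with the law of $K_1$, and for $y>0$ a union bound gives
\begin{equation}
\bbP[R_N\ge y]\ \le\ \sum_{i=1}^N \bbP[K_i-K_{i-1}\ge y]\ =\ N\,\bbP[K_1\ge y]\ \le\ C\,N\,\lambda^{-y},
\end{equation}
where the last step uses the tail bound from the first paragraph (with $C$ a constant depending only on $t_0$, $c$ and $\lambda$, since $\bbP[K_1\ge y]\le\bbP[K_1>\lfloor y\rfloor-1]\le e^{t_0 c}\lambda^{2-y}$, say). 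The stated restriction $y>1+\lambda^{2/3}$ plays no essential role in the inequality itself; it is a bookkeeping convenience for the later applications, and at worst it lets one absorb the multiplicative constant $C$, so I would simply carry it along.

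I do not expect a real obstacle here. The only two points needing a little care are: (i) producing a single $\lambda>1$ that is compatible with the fixed excursion parameter $c$, which is exactly what the Chernoff bound above does, the factor $e^{t_0 c}$ being harmless as a constant; and (ii) invoking the strong Markov property at the stopping times $K_{i-1}$ correctly, so that the excursion lengths are genuinely independent and identically distributed and not merely Markov-dependent. Once those are in place, the lemma is just a one-line maximal bound over the $N$ excursions, and this is the form in which it will be fed into the estimate for $T_N$ via Lemma~\ref{boundT_N}.
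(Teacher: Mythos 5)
Your proof is correct and takes a slightly different, more self-contained route than the paper for the key estimate on $K_1$. The paper's proof reduces exactly as you do — a union bound over the i.i.d.\ excursion lengths $K_i - K_{i-1}$ (the i.i.d.\ property being taken as obvious from the strong Markov property, which you spell out explicitly), so that $\bbP[R_N \geq y] \leq N\,\bbP[K_1 \geq y]$ — but then, to control the tail of $K_1$, it quotes a precise asymptotic from the literature: Doney (1989) and Iglehart (1974) give $\bbP[K_1 \geq n] \sim C_1(c)\lambda^{-n}n^{-3/2}$ with $\lambda$ the optimal exponential rate $1/\inf_{t}\E{e^{t\xi}}$. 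You instead derive a crude geometric tail by the elementary observation that $\{K_1 > k\}\subseteq\{S_k > -c\}$, followed by a Chernoff bound $\bbP[S_k>-c]\leq e^{t_0 c}\rho^{k}$ with $\rho=\E{e^{t_0\xi}}<1$ for some $t_0\in(0,\min(\theta,B))$. This gives a smaller $\lambda=1/\rho$ than the optimal one in the paper, but the lemma only asserts existence of \emph{some} $\lambda>1$, and when Lemma~\ref{R_N} is later invoked in the proof of Theorem~\ref{T_N2} the value of $\lambda$ only enters through the choice $s\geq\theta/\ln\lambda$, so a suboptimal $\lambda$ merely enlarges $s$ and causes no harm. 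Your approach is more elementary and avoids importing the much sharper $n^{-3/2}\lambda^{-n}$ asymptotic, of which only the exponential decay is used. Your observation that the restriction $y>1+\lambda^{2/3}$ plays no real role beyond constant bookkeeping also matches the paper, which does not use it in the proof.
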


\begin{proof}[Proof of Lemma~\ref{R_N}]

Using the fact that the random variables $ (K_{i+1} - K_{i})$ are i.i.d, a trivial union bound shows that
\begin{equation}
\bbP[ R_N \geq y] \leq N \bbP[ K_1 \geq y] 
\end{equation}

Lemma~\ref{R_N} is thus a direct consequence of the following tail estimate for the random variable $K_1$ (the first ladder epoch).

\begin{lemma}\label{K1-tail}
There exists a $\lambda>1$ depending only on the distribution $\nu$, such that for any $ c \geq 0$ there exists a constant $C_1(c)>0$ with
\begin{equation}
\bbP[K_1\geq n]\sim \frac{C_1(c)}{\lambda^nn^{3/2}}
\end{equation}
\end{lemma}

Lemma~\ref{K1-tail} is a consequence of Theorem II, p. 241 in \cite{doney}, and Theorem 2.1 in \cite{iglehart}.
\end{proof}

We now turn to the tail of the distribution of the sum of i.i.d random variables $U_i$.
We need first to understand the tail behavior of the common distribution of the i.i.d  random variables $U_i$.

\begin{lemma}\label{Upper-Lower-bound-U}\
\begin{enumerate}
\item
There exist two positive constants $C$ and $C'$, such that, as $t\rightarrow\infty$, 
\begin{equation}
\frac{C}{t^{\theta}}\leq \bbP[U>t]\leq \frac{C'}{t^\theta}.
\end{equation}
\item
The expectation $\E U$ is finite iff $\theta >1$.
\end{enumerate}
\end{lemma}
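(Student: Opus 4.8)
The plan is to obtain the power-law tail by reducing $U$ to a classical affine (``perpetuity'') recursion, and then to read off the moment statement from that tail. Throughout, write $U=U_1=\sum_{\ell=0}^{K_1-1}e^{S_\ell}\ge e^{S_0}=1$, where $K_1$ is the first descending ladder epoch at level $-c$, finite a.s. by negative drift. The first step is a reduction of the \emph{upper} bound to the tail of $Z:=\sum_{\ell\ge0}e^{S_\ell}$, which is a.s. finite because $S_\ell\to-\infty$. Conditioning on $\xi_1$ and using the strong Markov property: on $\{\xi_1\le-c\}$ one has $K_1=1$ and $U=1$, while on $\{\xi_1>-c\}$ one has $U=1+e^{\xi_1}U^{(c+\xi_1)}$, where $U^{(c')}$ denotes the analogous excursion sum with level $-c'$ and, given $\xi_1$, is built from the post-$\xi_1$ increments. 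Since every term $e^{S_\ell}$ is nonnegative, $U^{(c')}\le Z$ pathwise for every $c'$, so $U\le 1+e^{\xi_1}Z'$ pathwise with $Z'$ a copy of $Z$ made from those same increments; as $Z$ itself satisfies $Z\overset{d}{=}1+e^{\xi_1}Z'$, we conclude that $U$ is stochastically dominated by $Z$. (The same comparison shows $t\mapsto\bbP[U^{(c)}>t]$ is nondecreasing in $c$, which lets us enlarge $c$ freely below.)

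Next I would identify the tail of $Z$. It solves the affine recursion $Z\overset{d}{=}A+MZ'$ with $A=1$, $M=e^{\xi_1}>0$ and $Z'\perp(A,M)$, and under C1--C3 the hypotheses of the implicit renewal theorem (Goldie) are met: $\E{M^\theta}=\E{e^{\theta\xi}}=1$ by the definition of $\theta$; $\E{M^\theta\log^+M}=\E{\xi^+e^{\theta\xi}}<\infty$ since $t\mapsto\E{e^{t\xi}}$ is finite in a neighbourhood of $\theta$ (C1); $\log M=\xi$ is non-lattice (C3); $\E{|A|^\theta}=1<\infty$; and $M$ is non-degenerate. Hence $\bbP[Z>t]\sim c_\infty t^{-\theta}$ for some $c_\infty\in(0,\infty)$, and in particular $\bbP[U>t]\le\bbP[Z>t]\le C' t^{-\theta}$. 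This is the step I expect to be the main obstacle: the naive bound $U\le K_1e^{W}$ with $W=\max_{0\le\ell<K_1}S_\ell$, combined with the tail of $K_1$ (Lemma~\ref{K1-tail}) and the elementary estimate $\bbP[W\ge x]\le e^{-\theta x}$ (from $\E{e^{\theta\xi}}=1$ by optional stopping and Fatou for the martingale $e^{\theta S_n}$), only yields $\bbP[U>t]\lesssim(\log t)^{\theta}t^{-\theta}$; removing that logarithm requires the Tauberian/renewal input of the implicit renewal theorem, either cited or reproduced locally in the appendix.

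For the \emph{lower} bound, decompose $Z$ at the first ladder epoch: $Z=U_1+e^{S_{K_1}}\widetilde Z$, where $\widetilde Z=\sum_{m\ge0}e^{S_{K_1+m}-S_{K_1}}$ is, by the strong Markov property, an independent copy of $Z$, and $e^{S_{K_1}}\le e^{-c}$. Then $\{Z>2t\}\subseteq\{U_1>t\}\cup\{\widetilde Z>e^{c}t\}$, so applying the tail asymptotics twice,
\[
\bbP[U>t]\ \ge\ \bbP[Z>2t]-\bbP[Z>e^{c}t]\ \sim\ c_\infty\,t^{-\theta}\bigl(2^{-\theta}-e^{-c\theta}\bigr),
\]
which is $\ge Ct^{-\theta}$ for all large $t$ once $c$ is chosen with $e^{-c\theta}<2^{-\theta-1}$ (permissible by the monotonicity noted above). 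Alternatively, one can use $U\ge e^{W}$ and prove $\bbP[W\ge x]\ge c\,e^{-\theta x}$ directly from the Cram\'er--Lundberg estimate $\bbP[\sup_n S_n\ge x]\sim C_+e^{-\theta x}$, subtracting off the probability (at most $C_+e^{-\theta(x+c)}$, by the strong Markov property at $K_1$) of reaching level $x$ only after the first passage below $-c$. Finally, the moment statement is immediate from part~(1): $\E{U}=\int_0^\infty\bbP[U>t]\,dt$, and since the integrand is bounded by $1$ near $0$ and of exact order $t^{-\theta}$ at infinity, $\E{U}<\infty$ if and only if $\theta>1$.
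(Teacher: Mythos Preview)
Your proof is correct and follows the paper's approach: the upper bound comes from the pathwise domination $U\le U_\infty$ (your $Z-1$) together with the Kesten--Goldie implicit renewal theorem applied to the affine recursion for $U_\infty$, and the lower bound from $U\ge e^{V}$ with $V=\max_{1\le n\le K_1}S_n$ and Iglehart's asymptotic $\bbP[V>x]\sim C_2(c)e^{-\theta x}$ (this is precisely your ``alternatively'' route). One small correction on your \emph{first} lower-bound variant (splitting $Z$ at $K_1$): the monotonicity you record, that $c\mapsto\bbP[U^{(c)}>t]$ is nondecreasing, goes the wrong way to let you ``enlarge $c$'' when proving a \emph{lower} bound---establishing $\bbP[U^{(c')}>t]\ge Ct^{-\theta}$ for some $c'>c$ says nothing about $U^{(c)}$---so that argument as written only covers $c$ large enough that $2^{-\theta}-e^{-c\theta}>0$; this is harmless since your second method (the paper's) works for every $c>0$.
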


\begin{proof}
Obviously, the second statement of Lemma~\ref{Upper-Lower-bound-U} is a direct consequence of the first.
We begin by proving the upper bound in this first statement. Clearly,
\begin{equation}
\bbP[U > t] \leq \bbP[ U_\infty > t],
\end{equation}
where
\begin{equation}
U_\infty=\sum_{n=1}^\infty e^{S_n}.
\end{equation}

The exact asymptotic behavior of the tail of $U_{\infty}$ is a simple consequence of known results.

\begin{lemma}\label{tailUinfty}
There exists a constant $C_5$ such that, as $t \rightarrow \infty$,
\begin{equation}
\bbP[ U_\infty > t] \sim C_5 t^{-\theta}
\end{equation}
\end{lemma}

\begin{proof}[Proof of Lemma~\ref{tailUinfty}]
Obviously,
\begin{equation}
U_\infty=e^{S_1}\sum_{n=2}^\infty e^{S_n-S_1}+e^{S_1},
\end{equation}

so that 
\begin{equation}
U_\infty\stackrel{(d)}{=}e^{\xi}U'_\infty +e^{\xi},
\end{equation}

where $U'_\infty$ is a random variable with the same distribution as $U_\infty$ and independent from $\xi$. This is an implicit renewal equation of the type treated by Kesten in \cite{kesten} and Goldie in \cite{goldie}. Theorem 4.1, p. 135 in \cite{goldie} implies then the tail estimate given in the lemma.
\end{proof}

So that we have proved the upper bound stated in Lemma~\ref{Upper-Lower-bound-U}.
We now turn to the lower bound. Recall that
\begin{equation}
V=\max_{1\leq n\leq K_1}S_n,
\end{equation}

then clearly,
\begin{equation}
U \geq e^V.
\end{equation}

The asymptotic behavior of the tail of $V$ is also known.

\begin{lemma}\label{boundV}
\begin{equation}
\bbP[V > y] \sim C_2(c)e^{-\theta y}\quad\text{as }y\rightarrow\infty
\end{equation}
\end{lemma}
This is proved in \cite{iglehart1972}, Theorem 1, p. 630 (or see \cite{karlin_dembo}, p. 115).

This obviously implies the lower bound of Lemma~\ref{Upper-Lower-bound-U}.

\end{proof}

The upper bound given in Lemma~\ref{Upper-Lower-bound-U} implies the following strong uniform bounds for the tail of the law of the random variable $\sum_{i=1}^N U_i$.

\begin{lemma}\label{Borovkov}\
\begin{enumerate}
\item
If $\theta < 1$, for any $T>1$, there exists a constant $C(T)>0$ such that, uniformly in $x \geq T N^{\frac{1}{\theta}}$
\begin{equation}
\bbP\Big[ \sum_{i=1}^N U_i \geq  x\Big] \leq \frac{C(T)N}{x^{\theta}}
\end{equation}
\item
If $1<\theta < 2$, for any $T>1$, there exists a constant $C(T)>0$ such that, uniformly in $x \geq T N^{\frac{1}{\theta}}$
\begin{equation}
\bbP \Big[ \sum_{i=1}^N U_i \geq N\E U + x\Big] \leq \frac{C(T)N}{x^{\theta}}
\end{equation}
\item
If $\theta>2$, for any $T>1$, there exists a constant $C(T)>0$ such that, uniformly in $x \geq T \sqrt{N \ln N}$
\begin{equation}
\bbP \Big[ \sum_{i=1}^N U_i \geq N\E U + x\Big] \leq \frac{C(T)N}{x^{\theta}}
\end{equation}
\end{enumerate}
\end{lemma}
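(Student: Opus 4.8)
The statement is a large-deviation estimate for a sum of i.i.d.\ variables whose common law has a purely polynomial tail, $\mathbb{P}[U>t]\asymp t^{-\theta}$ by Lemma~\ref{Upper-Lower-bound-U}, and the three items are precisely the three regimes $\mathbb{E}U=\infty$, $\mathbb{E}U<\infty$ but $\mathrm{Var}\,U=\infty$, and $\mathrm{Var}\,U<\infty$. The plan is the classical truncation (``single big jump'') method, which is exactly what a Fuk--Nagaev-type inequality packages. Fix a truncation level $y=\delta x$ with $\delta=\delta(T)\in(0,1)$ small, write $U_i=(U_i\wedge y)+(U_i-y)^+$, and bound, with $a=0$ when $\theta<1$ and $a=N\mathbb{E}U$ when $\theta>1$,
\begin{equation}
\mathbb{P}\Big[\sum_{i=1}^N U_i\ge a+x\Big]\le N\,\mathbb{P}[U>y]+\mathbb{P}\Big[\sum_{i=1}^N (U_i\wedge y)\ge a+x\Big].
\end{equation}
By Lemma~\ref{Upper-Lower-bound-U} the first term is at most $C'\delta^{-\theta}\,Nx^{-\theta}$, already of the announced order. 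So the whole problem is to show that the truncated sum contributes a term of the same order $N/x^\theta$, uniformly over the stated range of $x$.

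For the truncated sum I would use a Bernstein/Bennett inequality. The two moment inputs, $\mathbb{E}[U\wedge y]$ and $v_y:=\mathrm{Var}(U\wedge y)$, come from integrating the tail bound of Lemma~\ref{Upper-Lower-bound-U}: for $\theta<2$, $\mathbb{E}[U\wedge y]\le C(1+y^{1-\theta})$ (so $\asymp y^{1-\theta}$ for $\theta<1$, bounded for $\theta>1$, with $N\mathbb{E}[(U-y)^+]\le CNy^{1-\theta}$ in the latter case) and $v_y\le C\max(1,y^{2-\theta})$; for $\theta>2$ simply $v_y\le\sigma^2:=\mathrm{Var}\,U<\infty$ (the second part of Lemma~\ref{Upper-Lower-bound-U}). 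On the event in question the centered truncated sum $\sum_i(U_i\wedge y-\mathbb{E}[U\wedge y])$ exceeds $x$ when $\theta>1$ (since $N\mathbb{E}[U\wedge y]\le N\mathbb{E}U=a$) and exceeds $x-N\mathbb{E}[U\wedge y]\ge x/2$ when $\theta<1$ (because $N\mathbb{E}[U\wedge y]\le C\delta^{1-\theta}Nx^{1-\theta}\le C\delta^{1-\theta}T^{-\theta}x$, using $Nx^{-\theta}\le T^{-\theta}$, which is $\le x/2$ for $\delta$ small). For $\theta<1$ it is in fact simplest to skip the centering and use the elementary Chernoff bound for $[0,y]$-valued summands, $\mathbb{P}[\sum_i(U_i\wedge y)\ge x]\le\big(eN\mathbb{E}[U\wedge y]/x\big)^{x/y}$, valid once $\delta$ is small.

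The final step is balancing $\delta$ against $T$ and the admissible range of $x$. For $\theta<1$: with $\mu:=N\mathbb{E}[U\wedge y]\le C''\delta^{1-\theta}Nx^{1-\theta}$ and $x/y=1/\delta\ge1$, for $\delta$ small one has $e\mu/x\le C'''Nx^{-\theta}\le C'''T^{-\theta}\le1$, and raising a number $\le C_1Nx^{-\theta}\le1$ to a power $1/\delta\ge1$ only shrinks it by the factor $(Nx^{-\theta})^{1/\delta-1}\le1$, giving $C(T)Nx^{-\theta}$. For $1<\theta<2$: Bennett gives $\mathbb{P}[\sum W_i\ge x]\le\exp\!\big(-\tfrac{Nv_y}{y^2}h(\tfrac{xy}{Nv_y})\big)$ with $h(u)=(1+u)\ln(1+u)-u$; with $v_y\le C(\delta x)^{2-\theta}$ the argument of $h$ is $\asymp\delta^{\theta-1}x^\theta/N$, and splitting on whether it is $\ge1$ or $<1$ and using $h(u)\ge\tfrac u2\ln u$ resp.\ $h(u)\ge cu^2$, together with $Nx^{-\theta}\le T^{-\theta}\le1$, again produces $C(T)Nx^{-\theta}$. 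For $\theta>2$: $v_y\le\sigma^2$, so Bennett yields an essentially Gaussian bound $\exp(-cx^2/(N\sigma^2))$; at the cutoff $x=T\sqrt{N\ln N}$ this equals $N^{-cT^2/\sigma^2}$, which is $\le Nx^{-\theta}\asymp N^{1-\theta/2}(\ln N)^{-\theta/2}$ precisely when $T$ (equivalently the constant $C_0$ in the third item) is large enough, and the ratio of the two sides is monotone decreasing in $x$ on $[T\sqrt{N\ln N},\infty)$, so only the endpoint matters; this is exactly why the hypothesis there asks for $x\gtrsim\sqrt{N\ln N}$ rather than $x\gtrsim N^{1/\theta}$.

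The main obstacle, as this makes clear, is the third regime: the truncated sum genuinely has a Gaussian-type deviation tail, and one must use the lower cutoff $x\ge T\sqrt{N\ln N}$ with $T$ large and the finiteness of $\mathrm{Var}\,U$ to push that term below $N/x^\theta$; the regimes $\theta<1$ and $1<\theta<2$ are comparatively soft because the ``big jump'' term $N\bar F(y)\asymp N/x^\theta$ already dominates. A shorter alternative is to quote an off-the-shelf Fuk--Nagaev / Nagaev large-deviation inequality (as in Borovkov's book) for sums of i.i.d.\ variables with regularly varying tails and feed in the tail asymptotics of Lemma~\ref{Upper-Lower-bound-U}; the three items are then just the specialization of that inequality to $\theta<1$, $1<\theta<2$ and $\theta>2$, with the truncation level and the range of $x$ chosen as above.
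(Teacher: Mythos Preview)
The paper's proof is a one-line citation: it feeds the tail bound $\mathbb{P}[U>t]\le C/t^{\theta}$ from Lemma~\ref{Upper-Lower-bound-U} into Lemma~2.1, Corollary~3.1 and Corollary~4.2 of Borovkov~\cite{borovkov}. Your proposal instead unpacks a self-contained truncation-plus-exponential-inequality argument of exactly the type those results package, and you even mention the citation route as a shortcut at the end --- which is precisely what the paper does. So the two agree at the level of ideas; your version has the virtue of being self-contained, at the cost of some bookkeeping in balancing $\delta$ against $T$.

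One caveat worth flagging. In the third regime $\theta>2$ you use a bare Bernstein/Bennett bound and obtain $\exp(-cx^{2}/(N\sigma^{2}))$; you correctly note that at the endpoint $x=T\sqrt{N\ln N}$ this dominates $Nx^{-\theta}\asymp N^{1-\theta/2}(\ln N)^{-\theta/2}$ only when $cT^{2}/\sigma^{2}>\theta/2-1$, i.e.\ for $T$ sufficiently large, whereas the lemma is stated ``for any $T>1$''. This is a genuine limitation of the simple Bennett bound: to cover every $T>1$ one needs the full Fuk--Nagaev inequality with a tunable power (choosing the truncation exponent $x/y$ large enough to force the sub-Gaussian term below any prescribed power of $x$), and that is exactly what Borovkov's results supply. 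In the paper's application (the proof of Theorem~\ref{T_N2}) the lemma is invoked only for $u>C_{0}N\ln N$ with $C_{0}$ ``large enough'', so the weaker version you establish already suffices for everything downstream.
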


This lemma is a consequence of the upper bound
\begin{equation}
\bbP[ U >t] \leq \frac{C}{t^{\theta}}
\end{equation}
 and of Lemma 2.1, Corollary 3.1 and Corollary 4.2 in \cite{borovkov}.
 
We are now, at long last, able to prove theorem~\ref{T_N2}.

\begin{proof}[Proof of Theorem~\ref{T_N2}]

For any $u>0$ and $s>0$,
\begin{equation}\label{T-R-U}
\bbP[T_N \geq u] \leq \bbP[ R_{i(N)} \geq s \ln u] + \bbP \Big[\sum_{i=1}^{i(N)}U_i \geq \frac{u}{sK(c)\ln u}\Big]
\end{equation}

By Lemma~\ref{R_N}, and using the fact that the sequence $R_m$ is increasing, we have that

\begin{equation}
\bbP[ R_{i(N)} \geq s \ln u]  \leq \bbP[ R_N \geq s \ln u] \leq \frac{CN}{u^{s \ln \lambda}}
\end{equation}

So that, choosing the parameter $s \geq \frac{\theta}{\ln \lambda}$, we see that

\begin{equation}\label{R}
\bbP[R_N \geq s \ln u] \leq \frac{CN}{u^{\theta}}.
\end{equation}

Moreover, using the fact that the random variables $U$ are non-negative, we have that

\begin{equation}
\bbP \Big[\sum_{i=1}^{i(N)}U_i \geq \frac{u}{sK(c)\ln u}\Big] \leq \bbP \Big[\sum_{i=1}^{N}U_i \geq \frac{u}{sK(c)\ln u}\Big]
\end{equation}

Now, if $\theta<1$, by Lemma~\ref{Borovkov}, we have that

\begin{equation}\label{U}
\bbP \Big[\sum_{i=1}^{i(N)}U_i \geq \frac{u}{sK(c)\ln u}\Big]  \leq \frac{CN( \ln u)^{\theta}}{u^{\theta}}
\end{equation}

Using inequalities~\eqref{T-R-U}, ~\eqref{R} and ~\eqref{U}, we see that

\begin{equation}
\bbP[T_N \geq u] \leq \frac{CN (\ln u)^{\theta}}{u^{\theta}}.
\end{equation}

This proves the first statement of Theorem~\ref{T_N2}.

If $\theta >1$, let $u > C_0 N \ln N$ for $C_0$ large enough, then by Lemma~\ref{Borovkov}, we have that

\begin{equation}\label{U2}
\bbP \Big[\sum_{i=1}^{i(N)}U_i \geq \frac{u}{sK(c)\ln u}\Big]   \leq \frac{CN (\ln u)^{\theta}}{u^{\theta}}
\end{equation}

Using inequalities~\eqref{T-R-U}, ~\eqref{R} and ~\eqref{U2}, we see that, for $u > C N \ln N$,

\begin{equation}
\bbP[T_N \geq u] \leq \frac{CN (\ln u)^{\theta}}{u^{\theta}}
\end{equation}

This proves the second statement of theorem~\ref{T_N2}.

\end{proof}

We now estimate the tail of the random variable $X_N$.

\begin{lemma}\label{X_N}
There exist $k>0$ and $\gamma_0>1$, so that for any $\gamma<\gamma_0$ and $\delta=\theta\gamma/(2\gamma+\theta)$, 
\begin{equation}
\bbP[ X_N \geq u] \leq \frac{C}{u^{\delta}}   e^{-\frac{\delta}{\gamma}kN}
\end{equation}

\end{lemma}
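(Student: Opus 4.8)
The plan is to exploit the factorization $X_N=c_0(D)^{-1}(1+U_\infty)(1+\hat U_\infty)$ and to control the two pieces on different scales: $c_0(D)^{-1}$ is typically of order one and exceeds a large value $s$ only on an event that is exponentially small in $N$, whereas $(1+U_\infty)(1+\hat U_\infty)$ has only a polynomial tail. So, for a threshold $s>0$ to be optimized at the end, I would start from
\[
\bbP[X_N\geq u]\ \leq\ \bbP\big[c_0(D)^{-1}\geq s\big]\ +\ \bbP\big[(1+U_\infty)(1+\hat U_\infty)\geq u/s\big].
\]

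\emph{Step 1 (the crux).} Recall $c_0(D)=|(-1)^N-\prod_{\ell=1}^N d_\ell|^2$ and that $\prod_\ell|d_\ell|=e^{S_N/2}$ for the negative-drift walk $S_N$ of Definition~\ref{functionalsRW}. If $c_0(D)^{-1}\geq s$ then $\prod_\ell d_\ell$ lies in the disk of radius $s^{-1/2}$ about $(-1)^N$; in particular $|e^{S_N/2}-1|\leq s^{-1/2}$, which forces $S_N$ into an interval $I_s$ of length $O(s^{-1/2})$ centred near the origin. Two independent effects make this rare. First, since $\E{\xi}=m<0$, keeping $S_N$ near $0$ after $N$ steps is a large-deviation event: picking $t\in(0,\theta)$ one has $\phi(t):=\E{e^{t\xi}}<1$ (the function $\phi$ is convex with $\phi(0)=\phi(\theta)=1$ and $\phi'(0)=m<0$), so a change of measure by the tilt $e^{tS_N}/\phi(t)^N$ contributes a factor $\phi(t)^N=e^{-kN}$. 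Second, under the (now mean-zero) tilted walk, $S_N$ is anti-concentrated on short intervals — this is exactly what H4 is designed to control, applied conditionally on $\xi_1,\dots,\xi_{N-1}$ and combined with the moment bound H2 to absorb large values of $|d_N|$ — which produces the factor $|I_s|^{\gamma}\asymp s^{-\gamma}$. Combining these (and disposing separately of the harmless event $\{S_N\geq 0\}$, of probability $\leq e^{-kN}$), one obtains
\[
\bbP\big[c_0(D)^{-1}\geq s\big]\ \leq\ C\,s^{-\gamma}\,e^{-kN}
\]
for a suitable $k>0$ and all $\gamma<\gamma_0$.

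\emph{Step 2 (polynomial tail) and Step 3 (optimization).} Since $\hat U_\infty$ has the same law as $U_\infty$ and $\bbP[U_\infty>t]\sim C_5 t^{-\theta}$ by Lemma~\ref{tailUinfty}, the inclusion $\{(1+U_\infty)(1+\hat U_\infty)\geq v\}\subset\{1+U_\infty\geq\sqrt v\}\cup\{1+\hat U_\infty\geq\sqrt v\}$ gives $\bbP[(1+U_\infty)(1+\hat U_\infty)\geq v]\leq C v^{-\theta/2}$ for all $v>0$. Inserting this and Step 1 into the split yields $\bbP[X_N\geq u]\leq C s^{-\gamma}e^{-kN}+C(u/s)^{-\theta/2}$, and choosing $s$ to equalize the two terms, namely $s=(u^{\theta}e^{-2kN})^{1/(2\gamma+\theta)}$, a direct computation shows that with $\delta=\theta\gamma/(2\gamma+\theta)$ both terms are at most $C u^{-\delta}e^{-\frac{\delta}{\gamma}kN}$, which is the claim. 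In the parameter range where this optimal $s$ falls below the threshold for which Step~1 is stated, I would instead run the split at a fixed $s=s_0$; the two resulting contributions $e^{-kN}$ and $u^{-\theta/2}$ are then — precisely in that range — again bounded by $C u^{-\delta}e^{-\frac{\delta}{\gamma}kN}$, the exponents matching at the boundary, so the estimate holds for all $u>0$.

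The hard part will be Step~1: the decay in $s$ and the decay in $N$ must be extracted \emph{simultaneously}, so that the polynomial anti-concentration coming from H4 is not destroyed by the exponential tilt used for the large-deviation estimate, and conversely the tilt must retain the $e^{-kN}$ gain once the indicator of the short interval $I_s$ has been inserted. Once this two-scale bound for $\bbP[c_0(D)^{-1}\geq s]$ is in place, everything else is routine bookkeeping.
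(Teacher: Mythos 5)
Your proposal follows essentially the same route as the paper: split the event on $c_0(D)^{-1}$ versus $(1+U_\infty)(1+\hat U_\infty)$, handle $c_0(D)$ by combining an exponential tilt (giving the $e^{-kN}$ factor) with the $H4$-driven anti-concentration of the tilted walk (this is exactly the content of the paper's Lemmas~\ref{locreg}--\ref{c0}), bound the $U_\infty$-piece by Lemma~\ref{tailUinfty}, and optimize the threshold. One small bookkeeping slip: from $c_0(D)^{-1}\geq s$ you get $|\,|\prod d_\ell|-1|\leq s^{-1/2}$, so $|I_s|\asymp s^{-1/2}$ and the anti-concentration produces $|I_s|^{\gamma}\asymp s^{-\gamma/2}$, not $s^{-\gamma}$; this only reparametrizes the achievable $\delta$ (one gets $\delta=\theta\gamma/(2\gamma+2\theta)$ rather than $\theta\gamma/(2\gamma+\theta)$), which is harmless for the way the lemma is used.
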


\begin{proof}[Proof of Lemma~\ref{X_N}]
We will rely on the following simple consequences of assumptions H2, H3 and H4 from the introduction. Recall
\begin{enumerate}
\item[H2.]
We assume that $\mu$ has finite moments, i.e. there exists a $t>1$ such that, 
$\int |z|^t \d\nu(z) = \bbE[|d_{\ell}|^t] < \infty$.
\item[H3.]
$m<0$.
\item[H4.] There exist $C>0$ and $\rho \in (0,1]$ such that, for all $h>0$,
\begin{equation}
\sup_{x >0} \nu(\{z \in \C | x <  |z| < x+h\}) \leq C h^{\rho}.
\end{equation} 
\end{enumerate}

Let 
\begin{equation}
B = \max \{ t \geq 0, \bbE[ |d_{\ell}|^t] < \infty \}
\end{equation}

We know, by Hypothesis H2, that $B>1$. 
If $B <\infty$, we define $\gamma_0 (\lambda)= \frac{\rho(B-2\lambda)}{B+\rho}$.
If $B = \infty$, we can choose $\gamma_0 (\lambda)= \rho$

Let $\nu$ be the distribution of $2\log|d_{\ell}|$.
Introduce, for $\lambda < \frac{B}{2}$, the exponential moment $M(\lambda)= \int e^{\lambda x} \nu(dx) = \bbE[ |d_{\ell}|^{2\lambda}]$ and the tilted measure
\begin{equation}
\nu_{\lambda}(dx) = e^{\lambda x - \log M(\lambda)} \nu(dx)
\end{equation}

\begin{lemma}\label{locreg}
For any  $\lambda < \frac{B}{2}$, any $\gamma<\gamma_0$, any $x>0$ and $h>0$, and any integer $N \geq 1$
\begin{equation}
\sup_{x >0} \nu_{\lambda}^{\ast N}(x,x+h) \leq C h^{\gamma}
\end{equation}
\end{lemma}

\begin{proof}[Proof of Lemma~\ref{locreg}]
We treat the case $B<\infty$. The case where $B=\infty$ is an immediate consequence.
We begin by proving the lemma for $\lambda=0$ and  $N=1$.
Obviously,
\begin{equation}
\nu_0(x,x+h) = \nu(x,x+h)= \bbP[ e^x< |d|^2 < e^x e^h]
\end{equation}
By Holder's inequality, for any pair of conjugate exponents $p$ and $q$
\begin{equation}
\nu(x,x+h)  \leq \bbP[ |d| > e^{\frac{x}{2}}]^{\frac{1}{p}}\bbP[ e^{\frac{x}{2}} < |d| < e^{\frac{x}{2}}e^{\frac{h}{2}}]^{\frac{1}{q}}
\end{equation}
By Markov inequality, for any $t<B$,
\begin{equation}
\bbP[ |d| > e^{\frac{x}{2}}] \leq C e^{-t \frac{x}{2}}
\end{equation}
Moreover by Hypothesis H4
\begin{equation}
\bbP[ e^{\frac{x}{2}} < |d| < e^{\frac{x}{2}}e^{\frac{h}{2}}] \leq C e^{\rho \frac{x}{2}} (e^{\frac{h}{2}} -1)^{\rho}
\end{equation}
So that, choosing $p= 1 + \frac{t}{\rho}$, we have that
\begin{equation}
\nu(x,x+h)  \leq C (e^{\frac{h}{2}} -1)^{\frac{t\rho}{t+\rho}}
\end{equation}
For $\gamma < \gamma_0$ one can choose $t < B$ so that $\frac{t\rho}{t+\rho} > \gamma$, which implies the Lemma when $N=1$ and $\lambda=0$.

We now prove the Lemma for $N=1$ and any $\lambda < B/2$.
For any pair of conjugate exponents $p$ and $q$, Holder's inequality yields that, 
\begin{equation}
\nu_{\lambda} (x, x+h) = \int e^{\lambda y} \1_{x<y<x+h} d\nu(y) \leq C  (\int e^{q \lambda y} \1_{x<y<x+h} d\nu(y))^{\frac{1}{q}} \nu(x,x+h)^{\frac{1}{p}}
\end{equation}
Choosing $q < \frac{B}{2\lambda}$ shows that
\begin{equation}
\nu_{\lambda} (x, x+h) \leq C(q) \nu(x,x+h)^{\frac{1}{p}}
\end{equation}
So that
\begin{equation}
\sup_{x>0} \nu_{\lambda} (x, x+h)  \leq C(q) \sup_{x>0} \nu(x,x+h)^{\frac{1}{p}} 
\end{equation}
For any $\gamma < \gamma_0$, by choosing $q$ close enough to $\frac{B}{2\lambda}$, we see that
\begin{equation}
\sup_{x>0} \nu_{\lambda} (x, x+h)  \leq C(q) h^{\gamma}
\end{equation}
Which proves the lemma for any $\lambda < B/2$ and $N=1$.

To deal with the case where $N >1$, we will now use the following simple remark, which clearly completes the proof of Lemma~\ref{locreg}
For any Borel set $E$ in $\R$, and any probability measure $\alpha$ on $\R$
\begin{equation}
\sup_{x \in \R} \alpha^{\ast N} (E-x) \leq \sup_{x \in \R} \alpha(E-x)
\end{equation}

Indeed, if $\mu_1$ and $\mu_2$ are any two probability measures on $\R$, then, 
\begin{equation}
\mu_1 \ast \mu_2(E) = \int \mu_1(E-x) d\mu_2(x) \leq \sup_{x \in \R} \mu_1(E-x)
\end{equation}
So that
\begin{equation}
\sup_{x \in \R} \mu_1 \ast \mu_2(E-x) \leq \min (\sup_{x \in \R} \mu_1(E-x), \sup_{x \in \R} \mu_2(E-x))
\end{equation}
By induction, for any probability measure $\alpha$ on $\R$, and any integer $N$
\begin{equation}
\sup_{x \in \R} \alpha^{\ast N} (E-x) \leq \sup_{x \in \R} \alpha(E-x)
\end{equation}
\end{proof}

Choose now $\lambda_0$ as the unique solution of 
\begin{equation}
\int x e^{\lambda_0 x} \nu(dx) =0 
\end{equation}

Then it is easy to see that $\lambda_0 > 0$ and $M(\lambda_0) <1$. 
Define $k = - \log M(\lambda_0) > 0$.

We now prove the following lemma
\begin{lemma}\label{tailSN}
For any $x>0$ and $h>0$, and any $\gamma < \gamma_0(\lambda_0)$
\begin{equation}
\bbP[ S_N \in (x,x+h)]  \leq C e^{-kN - \lambda_0 x} h^{\gamma}
\end{equation}
\end{lemma}

\begin{proof}
If $S_N^{\lambda}$ is the sum of N i.i.d random variables with distribution $\nu_{\lambda}$, we have seen that, for any real numbers $x>0$ and $h>0$,
\begin{equation}
\bbP[ S_N^{\lambda_0} \in (x,x+h)] \leq C h^{\gamma}
\end{equation}

And finally, since
\begin{equation}
\bbP[ S_N \in (x,x+h)] = e^{-kN} \bbE [e^{-\lambda_0 S_N^{\lambda_0}} \1_{S_N^{\lambda_0} \in (x,x+h)}] \leq e^{-kN - \lambda_0 x} \bbP[ S_N^{\lambda} \in (x,x+h)]
\end{equation}

We get

\begin{equation}
\bbP[ S_N \in (x,x+h)] \leq C e^{-kN - \lambda_0 x} h^{\gamma}
\end{equation}

Which proves Lemma~\ref{tailSN}
\end{proof}

We are now ready to come back to the estimation of the lower tail of the random variable $c_0(D)$.
\begin{lemma}\label{c0}
There exists a $k>0$, such that, for any $\epsilon >0$ and $v < 1-\epsilon$ 
\begin{equation}
\bbP [ c_0(D) \leq v] \leq C e^{-kN} v^{\frac{\gamma}{2}}
\end{equation}
\end{lemma}

\begin{proof}[Proof of Lemma~\ref{c0}]

Recall that
\begin{equation}
c_0(D) = | (-1)^N - \prod_{\ell=1}^N d_{\ell}|^2
\end{equation}
Thus,
\begin{equation}
\bbP [ c_0(D) \leq v^2] \leq \bbP[ 1-v \leq |\prod_{\ell=1}^N d_{\ell}| \leq 1+v] 
\end{equation}

Recalling that we have denoted by $S_N = 2 \sum_{\ell=1}^N \log |d_{\ell}|$
we see that
\begin{equation}
\bbP [ c_0(D) \leq v^2] \leq  \bbP[1-v \leq e^{\frac{S_N}{2}} \leq 1+v] \leq \bbP[ 2 \log (1-v) \leq S_N \leq 2\log(1+v)]
\end{equation}

By Lemma~\ref{tailSN}, we see that

\begin{equation}
\bbP [ c_0(D) \leq v^2] \leq C e^{-kN} v^{\gamma}
\end{equation}

which proves Lemma~\ref{c0}.
\end{proof}

We are now able to prove Lemma~\ref{X_N}.
For any $0< \alpha <1$, and any $0< k'<k$, a simple union bound shows that
\begin{equation}
\bbP[ X_N \geq u] \leq \bbP\big[(1+U_\infty)(1+\hat U_\infty)\geq u^{1-\alpha} e^{\frac{k'N}{\gamma}}\big] +\bbP\big[\frac{1}{c_0(D)} \geq u^{\alpha}e^{-\frac{k'N}{\gamma}}\big]
\end{equation}

Since $\hat U_\infty$ has the same distribution than $U_\infty$, another simple union bound yields
\begin{equation}
\bbP\big[(1+U_\infty)(1+\hat U_\infty)\geq u^{1-\alpha} e^{\frac{k'N}{\gamma}}\big] \leq 2 \bbP\Big[(1+U_\infty)\geq u^{\frac{1-\alpha}{2}} e^{\frac{k'N}{2\gamma}}\Big]
\end{equation}

We know by Lemma~\ref{tailUinfty} that 
\begin{equation}
\bbP [U_{\infty} \geq u] \sim \frac{C}{u^{\theta}}
\end{equation}

So that, by Lemma ~\ref{c0}
\begin{equation}
\bbP[ X_N \geq u] \leq Cu^{-\frac{(1-\alpha)\theta}{2}}e^{-\frac{k'\theta}{2\gamma}N} + C e^{-(k-k')N} u^{-\alpha\gamma}      
\end{equation}

Now, choosing $\alpha = \frac{\theta}{2\gamma + \theta}$, and $k'= \frac{2\gamma}{\theta+2\gamma}k$, we see that

\begin{equation}
\bbP[ X_N \geq u] \leq \frac{C}{u^{\delta}}   e^{-\frac{\delta}{\gamma}kN}
\end{equation}

where 
\begin{equation}
\delta = \frac{\theta\gamma}{2\gamma+\theta}
\end{equation}

\end{proof}

\end{document}